\theoremstyle{plain}
\newtheorem{theorem}{Theorem}[section]
\newtheorem{lemma}[theorem]{Lemma}
\newtheorem{proposition}[theorem]{Proposition}
\newtheorem{corollary}[theorem]{Corollary}
\theoremstyle{definition}
\newtheorem{definition}[theorem]{Definition}
\newtheorem{example}[theorem]{Example}
\newtheorem{remark}[theorem]{Remark}
\newcommand{\OO}{\mathcal O}
\DeclareMathOperator{\colim}{\varinjlim}
\DeclareMathOperator{\ch}{char}
\DeclareMathOperator{\End}{End}
\DeclareMathOperator{\gr}{gr}
\DeclareMathOperator{\Hom}{Hom}
\DeclareMathOperator{\Maps}{Maps}
\DeclareMathOperator{\rank}{rank}
\DeclareMathOperator{\Spec}{Spec}
\DeclareMathOperator{\Sym}{Sym}
\DeclareMathOperator{\cone}{cone}
\DeclareMathOperator{\fib}{fiber}
\DeclareMathOperator{\Fun}{Fun}
\DeclareMathOperator{\Br}{Br}
\DeclareMathOperator{\Pic}{Pic}
\DeclareMathOperator{\NS}{NS}
\newcommand{\pop}{ {[p]} }
\newcommand{\Diff}{\mathrm{Diff}}
\newcommand{\Dtilde}{\widetilde{\mathcal D}}
\newcommand{\RGet}{R\Gamma_{\acute{e}t}}
\title{
	Twisting the Infinitesimal Site
}
\author{Joshua Mundinger}
\address{University of Wisconsin-Madison\\ Madison\\ WI, USA}
\email{jmundinger@wisc.edu}
\date{August 23, 2024}
\subjclass[2020]{Primary: 14F10; Secondary: 14G17}
\begin{document}

\begin{abstract}
	We classify twistings of Grothendieck's differential operators on a smooth variety $X$ in prime characteristic $p$. We prove isomorphism classes of twistings are in bijection with $H^2(X,\mathbb{Z}_p(1))$, the degree 2, weight 1 syntomic cohomology of $X$. We also discuss the relationship between twistings of crystalline and Grothendieck differential operators. 
	Twistings in mixed characteristic are also analyzed.
\end{abstract}

\maketitle
\vspace{-1em}
\section{Introduction}

Beilinson and Bernstein defined and studied rings of twisted differential operators in connection with their study of representations of semisimple Lie algebras \cite{bb93}.
If $\Diff_X$ is the ring of differential operators on a smooth space $X$, then a ring of twisted differential operators on $X$ is essentially a filtered ring $D = \cup_i D_{\leq i}$ such that 
\[\gr D \cong \gr \Diff_X\] 
as Poisson algebras. If $X$ is a holomorphic manifold or complex algebraic variety, then such rings $D$ are generated in degree 1 and are thus classified by the extension 
\[ 0 \to \OO_X \to D_{\leq 1} \to T_X \to 0\]
of sheaves of Lie algebras. The set of isomorphism classes of such extensions is thus an abelian group isomorphic to $H^2(X,\Omega_X^{\geq 1})$, which coincides with the Hodge filtration subspace $F^1H_{dR}^2(X)$ when $X$ is a compact K\"ahler manifold or a proper complex algebraic variety \cite[Lemma 2.1.6]{bb93}.

In this paper, we study twisted differential operators when $X \to S$ is a smooth morphism of schemes over a nonzero characteristic base $S$. 
When $S$ is not of characteristic zero, then there are multiple inequivalent notions of differential operators on $X$. Two main rings of interest are the \emph{crystalline differential operators} $\mathcal D^{crys}_X$ and the \emph{Grothendieck differential operators} $\Diff_X$. The ring of crystalline differential operators $\mathcal D^{crys}_X$ is the enveloping algebra of the tangent sheaf $T_{X/S}$, with associated graded $\Sym_{\OO_X} T_{X/S}$; the ring of Grothendieck differential operators, or ``full'' ring of differential operators, $\Diff_X$ includes divided powers of partial derivatives, and has associated graded $\Gamma_{\OO_X} T_{X/S}$, the divided symmetric power algebra. There is a filtered map 
\[ \mathcal D^{crys}_X \to \Diff_X,\]
inducing the canonical map $\Sym_{\OO_X} T_{X/S} \to \Gamma_{\OO_X} T_{X/S}$ on associated graded.
In positive characteristic, the map is neither injective nor surjective.

We study twistings $D$ of Grothendieck's differential operators $\Diff_X$. 
These algebras $D$ are not generated in degree 1 and so are not determined by the extension class of $D_{\leq 1}$. Instead, such algebras are classified by a $p$-adic analogue of $F^1 H^2_{dR}(X)$.
Given a prime $p$, the \emph{weight 1 syntomic cohomology} of a scheme $X$ is by definition
\[ R\Gamma(X, \mathbb Z_p(1)) = \RGet(X,\mathbb G_m)^\wedge_p[-1],\]
the derived $p$-completion of $\RGet(X,\mathbb G_m)[-1]$.
\begin{theorem} \label{theorem: main}
	Let $k$ be a perfect field of characteristic $p$ and $X \to \Spec k$ a smooth variety. Then isomorphism classes of rings of twisted  Grothendieck differential operators are in bijection with 
	\[ H^2(X, \mathbb Z_p(1)),\]
	the degree 2, weight 1 syntomic cohomology of $X$.
\end{theorem}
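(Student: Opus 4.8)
My plan is to classify twistings $D$ of $\Diff_X$ by decomposing $\Diff_X$ along its Frobenius-descent filtration and computing the sheaf of automorphisms level by level. By iterated Cartier/Frobenius descent one has $\Diff_X = \colim_n \Diff_X^{(n)}$, where $\Diff_X^{(n)} := \End_{\OO_{X^{(n)}}}(\OO_X)$ is the subalgebra of operators ``of level $n$''; since $X\to\Spec k$ is smooth, $\OO_X$ is locally free of finite rank over $\OO_{X^{(n)}}$, so each $\Diff_X^{(n)}$ is a split Azumaya algebra over $X^{(n)}$, and moreover $\Diff_X^{(n)} = \{\theta\in\Diff_X : [\theta, f^{p^n}]=0 \text{ for all } f\in\OO_X\}$ is the centralizer of $\OO_X^{p^n}=\OO_{X^{(n)}}$ inside $\Diff_X$ (using that $k$ is perfect, so $\OO_X^{p^n}$ spans $\OO_{X^{(n)}}$). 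The first step is to show that a twisting $D$ of $\Diff_X$ — a filtered $\OO_X$-algebra with $\gr D \cong \Gamma_{\OO_X} T_{X/k}$ as Poisson algebras — is the same datum as a compatible system $(D^{(n)})_{n\ge 0}$ of twistings of the $\Diff_X^{(n)}$ with $D=\colim_n D^{(n)}$: one sets $D^{(n)}:=Z_D(\OO_X^{p^n})$, which is intrinsic to the pair $(D,\OO_X)$, and checks that étale-locally $D\cong\Diff_X$, so that $D^{(n)}$ is étale-locally isomorphic to $\Diff_X^{(n)}$.

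Next I would compute, for each fixed $n$, the étale sheaf $\mathcal A_n$ of automorphisms of $\Diff_X^{(n)}$ fixing $\OO_X$ pointwise (such automorphisms automatically act trivially on $\gr$). Since $\Diff_X^{(n)}$ is Azumaya over $X^{(n)}$, Skolem--Noether shows any such automorphism is étale-locally conjugation by a unit of the centralizer of $\OO_X$ in $\Diff_X^{(n)}$; but $\OO_X$ is a maximal commutative subalgebra equal to its own commutant, so this centralizer is $\OO_X$ itself, and conjugation by $u\in\OO_X^\times$ is trivial exactly when $u$ is central, i.e. $u\in\OO_{X^{(n)}}^\times$. Hence $\mathcal A_n\cong\underline{\OO_X^\times}/\underline{\OO_{X^{(n)}}^\times}$ as étale sheaves on $X\cong X^{(n)}$. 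Because $k$ is perfect, $\underline{\OO_{X^{(n)}}^\times}$ is precisely the sheaf-theoretic image of the $p^n$-power map on $\underline{\OO_X^\times}=\mathbb G_m$ — this is the one place perfectness is essential — so $\mathcal A_n\cong\mathbb G_m/\mathbb G_m^{p^n}\cong\mathbb G_m\otimes^L_{\mathbb Z}\mathbb Z/p^n$ in the étale derived category (the kernel $\mu_{p^n}$ of $p^n$ on $\mathbb G_m$ has no nonzero étale sections). Therefore twistings of $\Diff_X^{(n)}$ are classified by $H^1(X_{\mathrm{et}},\mathbb G_m/\mathbb G_m^{p^n})=H^1\!\big(\RGet(X,\mathbb G_m)\otimes^L_{\mathbb Z}\mathbb Z/p^n\big)$, and more precisely their Picard groupoid is $\tau_{[0,1]}$ of $\RGet(X,\mathbb G_m\otimes^L\mathbb Z/p^n)[1]$.

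Finally I would assemble the levels. The inclusions $\Diff_X^{(n)}\hookrightarrow\Diff_X^{(n+1)}$ induce on automorphism sheaves the natural projections $\mathbb G_m/\mathbb G_m^{p^{n+1}}\twoheadrightarrow\mathbb G_m/\mathbb G_m^{p^n}$ (restrict conjugation by $u$), which under the identification above are exactly the transition maps of the pro-system $\{\mathbb G_m\otimes^L\mathbb Z/p^n\}_n$. Passing to the homotopy limit over $n$, the Picard groupoid of twistings of $\Diff_X=\colim_n\Diff_X^{(n)}$ is computed by $R\lim_n\RGet(X,\mathbb G_m\otimes^L\mathbb Z/p^n)[1]=\RGet(X,\mathbb G_m)^\wedge_p[1]$, so that $\pi_0$ is $H^1(\RGet(X,\mathbb G_m)^\wedge_p)=H^2(X,\mathbb Z_p(1))$. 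The $\lim^1$-corrections are controlled using that $\OO^\times(X)$ is finitely generated and $\Pic(X)/p^n$, $\Br(X)[p^n]$ are finite, and the short exact sequence $0\to\Pic(X)^\wedge_p\to H^2(X,\mathbb Z_p(1))\to T_p\Br(X)\to 0$ that drops out makes visible the ``Picard part'' of a twisting (a compatible system of line bundles on the $X^{(n)}$) and its ``transcendental'' Brauer part.

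The main obstacle is the first step: extracting from an abstract twisting $D$ — which a priori only remembers $\gr D\cong\Gamma_{\OO_X}T_{X/k}$ — the compatible system $(D^{(n)})$ of Azumaya algebras over the $X^{(n)}$. Concretely this is an étale-local rigidity statement, the $p$-adic analogue of \cite[Lemma 2.1.6]{bb93}: over a strictly henselian local ring every twisting of $\Diff_X$ is isomorphic to $\Diff_X$ itself (because $H^1$ of $\mathbb G_m/\mathbb G_m^{p^n}$ vanishes there for all $n$, compatibly in $n$). This is exactly the issue flagged in the introduction — in contrast to characteristic zero, $D$ is not generated in degree $1$ and is \emph{not} recovered from the extension $0\to\OO_X\to D_{\le 1}\to T_{X/k}\to 0$, so one is forced to work with the entire level filtration — and I expect the bulk of the work to be here; the level-wise computation and the final assembly should then be comparatively formal.
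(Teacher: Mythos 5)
Your overall skeleton --- filter a twisting by the centralizers $D^{(n)} = Z_D(\OO_{X^{(n)}})$, classify each finite level by $H^1(X_{\acute{e}t}, \OO_X^\times/(\OO_X^\times)^{p^n})$, and assemble over $n$ by an inverse limit into $H^2(X,\mathbb Z_p(1))$ --- is essentially the paper's explicit construction in \S 4 (Definition of $D^r$, Lemma \ref{lemma: fppf-to-étale}, Proposition \ref{proposition: diff-ops-on-torsors} and its corollary). However, there is a genuine gap at exactly the step you flag as the main obstacle, and the route you propose for it does not work. You want to deduce that each $D^{(n)}$ is an \'etale form of $\Diff_X^{(n)}$ from the stronger claim that $D$ itself is \'etale-locally isomorphic to $\Diff_X$, proved by trivializing over strictly henselian local rings. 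Triviality over the strict henselization does not spread out to an actual \'etale neighborhood here: $D$ is not of finite presentation (it is the increasing union of all its levels), so a trivialization requires infinitely many compatible choices, and these cannot in general be made over a single \'etale $U$. This is the same phenomenon as for $\mathbb Z_p$-local systems: $\mathbb Z_p(1)$ is a pro-object rather than a sheaf, so its degree-two classes need not die \'etale-locally; in particular a tdo whose class has nontrivial image in $T_p\Br(X)$ has no reason to be \'etale-locally trivial, even though each finite level is. Your proposed justification is moreover circular, since it invokes the classification (vanishing of the relevant $H^1$'s ``compatibly in $n$'') over the henselization in order to establish the local rigidity needed to prove the classification.

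What is actually needed --- and what the paper proves --- is only the level-wise statement: $D^r = Z_D(\OO_{X^{(r)}})$ is an Azumaya algebra over $X^{(r)}$, split along the relative Frobenius, whose splitting module is the Frobenius pushforward of a line bundle. The Azumaya property is Proposition \ref{prop: centralizer-is-azumaya} (an associated-graded computation making no appeal to local triviality of $D$), and the identification of the splitting module, hence $D^r \cong \Diff^r_{\mathcal L}$ with $\mathcal L$ an $\OO_X^\times/(\OO_X^\times)^{p^r}$-torsor, is Proposition \ref{proposition: diff-ops-on-torsors}, whose key point is the socle/faithfulness argument on a fiber of the $r$-fold Frobenius showing the splitting module is free of rank one over $Fr^r_*\OO_X$. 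Your Skolem--Noether computation of the automorphism sheaf as $\OO_X^\times/\OO_{X^{(n)}}^\times \cong \mathbb G_m/\mathbb G_m^{p^n}$ is correct and is equivalent to the paper's use of Lemma \ref{lemma: fppf-to-étale}; the assembly over $n$ matches the paper's appeal to $H^2(X,\mathbb Z_p(1)) \cong \varprojlim_r H^2(X_{fppf},\mu_{p^r})$ via Mittag--Leffler, but your stated finiteness inputs ($\Pic(X)/p^n$ and $\Br(X)[p^n]$ finite, $\OO^\times(X)$ finitely generated) are neither true for general smooth varieties nor needed. With the level-wise Azumaya and splitting-module arguments supplied in place of the claimed \'etale-local triviality of $D$, your plan becomes the paper's proof.
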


The group $H^2(X,\mathbb Z_p(1))$ can be expressed in terms of the Picard and Brauer groups via the short exact sequence 
\[\begin{tikzcd}
	0 & {\Pic(X)^\wedge_p} & {H^2(X,\mathbb Z_p(1))} & {T_p\Br(X)} & 0
	\arrow[from=1-1, to=1-2]
	\arrow[from=1-2, to=1-3]
	\arrow[from=1-3, to=1-4]
	\arrow[from=1-4, to=1-5]
\end{tikzcd},\]
where $T_p(-)$ is the $p$-adic Tate module and $\Pic(X)^\wedge_p$ is the classical $p$-adic completion of the Picard group.
The first map is induced by the Chern class $c_1: \Pic(X) \to H^2(X,\mathbb Z_p(1))$ \cite[§7]{bhattlurie22}. In terms of twisted differential operators, this assignment sends a line bundle $\mathcal L$ to $\Diff_{\mathcal L}$, the ring of differential operators on $\mathcal L$.
If $k$ is algebraically closed, then the syntomic cohomology group $H^2(X,\mathbb Z_p(1))$ may be computed via crystalline cohomology as 
\[ H^2(X, \mathbb Z_p(1)) \cong H^2(X, W\Omega^{\geq 1}_X)^{\varphi= p},\]
see §3.2.1 below. This isomorphism transforms the syntomic Chern class to the crystalline Chern class,
of interest in the Tate conjecture.

If $X$ is a variety over a field $k$, then $H^2(X,\Omega_X^{\geq 1})$ is a $k$-vector space, while $H^2(X,\mathbb Z_p(1))$ is a $\mathbb Z_p$-module regardless of $k$. Here is the intuition why twisted differential operators should be controlled by a $\mathbb Z_p$-module and not a vector space. Fix an invertible function $f$; if $f^\lambda$ may be differentiated, then conjugation by $f^\lambda$ defines an automorphism of $\Diff_X$. The function $f^\lambda$ satisfies the differential equation $(d - \lambda d\log(f))f^\lambda = 0$, which makes sense for any parameter $\lambda$ in the base field. However, over a positive characteristic field $k$, we must specify not only the first derivative but all divided power derivatives. If $\partial$ is a derivation admitting divided powers $\{\partial^{(i)}\}_{i \geq 1}$, then Faà di Bruno's formula gives 
\[ 
\partial^{(i)}(f^\lambda) = 
\sum \binom{m}{m_1,\ldots, m_i} \binom{\lambda}{m} f^{\lambda - m} \prod_{j=1}^i \left( \partial^{(j)} f\right)^{m_j},
\]
where the sum runs over all $i$-tuples $(m_1,\ldots, m_i)$ such that $\sum_j j m_j = i$, while $m = \sum_j m_j$.
If we wish to form $\partial^{(i)}f^\lambda$ for all $i \geq 0$, then $\binom{\lambda}{m}$ must make sense for all $m \geq 0$, that is, the parameter $\lambda$ must be a $k$-point of a free binomial ring on one generator. The set of residue characteristic $p$ points of the free binomial ring on one generator is exactly $\mathbb Z_p$ \cite{elliott06}. The same considerations explain why T.\ Bitoun's $b$-function in positive characteristic is a locally constant function on $\mathbb Z_p$ \cite{bitoun18}.

One might ask whether the map $\mathcal D^{crys}_X \to \Diff_X$ can be twisted. Given an algebra of twisted Grothendieck differential operators $D$ on smooth $X \to \Spec k$, we construct in §\ref{subsection: compatibility}
a Frobenius-split twisting $\Dtilde^{crys}$ of $\mathcal D^{crys}_X$ depending on $D$ and a map $\Dtilde^{crys} \to D$ such that the kernel is generated by the ideal of the zero section of $T^*X = \Spec Z(\Dtilde^{crys})$.

We also study a mixed characteristic version of this problem, when $S = \Spec W_m(k)$ for finite $m$. If $X \to \Spec W_m(k)$ is smooth and of finite type, then in Theorem \ref{theorem: mixed-characteristic-syntomic-comparison} we construct a map from $H^2(X,\mathbb Z_p(1))$ to isomorphism classes of twisted differential operators on $X$. This map may or may not be an isomorphism, depending on the action of Frobenius on $R\Gamma(X_0,\OO_{X_0})$, where $X_0$ is the special fiber of $X$.

We caution the reader that our twisted differential operators are in the sense of Beilinson-Bernstein, and do not involve $q$-difference operators as in Gros-Le Stum-Quirós \cite{groslestumquiros22}.

\subsection*{Acknowledgments} 
The author thanks Victor Ginzburg for his encouragement of this project.
The author is grateful to
Daniel Bragg,
Luc Illusie,
Akhil Mathew,
and
Vadim Vologodsky 
for useful conversations and communications,
and to the anonymous referees for helpful suggestions.
Ben Antieau hosted stimulating workshops at Northwestern University which considerably advanced the progress of this work. The appendix was written through discussions with Dima Arinkin.
The author was partially supported by NSF Graduate Research Fellowship DGE 1746045.

\section{Twistings and twisted differential operators}

\subsection{Algebras of twisted differential operators}
\label{subsection: tdo}

Let $S$ be an affine scheme and $X \to S$ a morphism of schemes. Undecorated products of schemes over $S$ are understood to be products over $S$.

\begin{definition}\cite[§1.1.4]{bb93}
	A differential algebra on $X/S$ is a sheaf of associative algebras $D$ on $X$ equipped with a morphism $\iota: \OO_X \to D$ such that the image of $\OO_S$ is central and such that as an $\OO_X$-bimodule, $D$ is supported on the diagonal of $X \times X$.
\end{definition}

A differential algebra on $X/S$ is equipped with a canonical filtration $D_{\leq -1 } = 0, D_{\leq i} = \{Q \in D \mid [Q,f] \in D_{\leq i-1} \text{ for all }f \in \OO_X\}$.
Since $D$ is supported on the diagonal in $X \times X$, this filtration is complete.

The first example of a differential algebra is Grothendieck's ring of differential operators $\Diff_X$, defined as follows: 
$\Diff_{X,\leq i}$ consists of $\OO_S$-linear operators $Q: \OO_X \to \OO_X$ such that for all $f_0,f_1,\ldots, f_i \in \OO_X$, the iterated commutator $[\ldots[[Q,f_0],f_1],\ldots,f_i]$ vanishes.
If $\OO_S$ contains $\mathbb Q$, then $\Diff_X$ is generated by operators of degree at most 1; if $\OO_S$ contains $\mathbb F_p$, then $\Diff_X$ is not Noetherian, and contains operators such as $(p^r)!^{-1} (d/dt)^{p^r}$. If $X/S$ is smooth, then the associated graded ring $\gr \Diff_X$ is isomorphic to the divided power symmetric algebra $\Gamma_{\OO_X} T_{X/S}$ on the tangent sheaf $T_{X/S}$ \cite[2.6 Proposition]{berthelotcrystalline}.

We are interested in twisted versions of Grothendieck's differential operators $\Diff_X$. Definition \ref{defn: tdo} below modifies Beilinson and Bernstein's definition of twisted differential operators \cite[§2]{bb93} to deal with divided powers.
If $D$ is a differential algebra on $X/S$ and $f \in \OO_X$, then the commutator with $f$ sends $D_{\leq \ast}$ into $D_{\leq \ast - 1}$ and thus defines $[-,f] :\gr_\ast D \to \gr_{\ast - 1} D$.
The assignment $f \mapsto [-,f]$ satisfies the Leibniz rule, and thus defines a bilinear map $\Omega^1_{X/S} \times \gr_\ast D \to \gr_{\ast - 1} D$. If $f_1\in \OO_X$ and $f_2 \in \OO_X$, then $f_1$ and $f_2$ commute, so $[-,f_1]$ and $[-,f_2]$ are commuting operators on $\gr D$.
Thus we obtain a pairing
\[ \Sym^i \Omega^1_{X/S} \times \gr_\ast D \to \gr_{\ast - i} D,\]
a generalization of the principal symbol of a differential operator.
\begin{definition}\label{defn: tdo}
	If $X/S$ is smooth, then an \emph{algebra of twisted Grothendieck differential operators} on $X/S$, or a \emph{Grothendieck tdo} on $X/S$, is a differential algebra $D$ such that 
	$\iota: \OO_X \to D_{\leq 0}$ is an isomorphism and the bilinear map 
	\[ 
		\Sym^i \Omega^1_{X/S} \times \gr_i D \to \gr_0 D \cong \OO_X
	\]
	defined by 
	\[ \langle df_1\cdots df_i, Q \rangle =[[[Q,f_1],\ldots ],f_i]
	\]
	is a perfect pairing.
\end{definition}

\begin{remark}
	For the rest of this paper, we will refer to Grothendieck tdo's simply as tdo's.
\end{remark}

Now assume $X/S$ is smooth. If $D$ is a tdo, then $\gr D$ is the divided power symmetric algebra on $T_{X/S}$. $\Diff_X$ is a tdo. The category of tdo's on $X$ is a groupoid, since a morphism of differential algebras preserves $\iota$ and thus the canonical filtration. If $u: U \to X$ is a smooth morphism, then pullback of left $\OO$-modules sends differential algebras on $X$ to differential algebras on $U$ \cite[§1.5]{bb93}; if $U \to X$ is étale, then $u^* \Omega^1_{X/S} = \Omega^1_{U/S}$, so the pullback of a tdo is a tdo. Hence tdo's form a 1-stack $TDO(X/S)$ on $X_{\acute{e}t}$.

\begin{example}
	If $\mathcal L$ is a line bundle on $X$,
	then $\Diff_{\mathcal L}$ is a tdo on $X$.
\end{example}	

\subsection{The stacky approach}\label{subsection: stacky approach}
Gaitsgory and Rozenblyum developed a general theory of twistings in order to correctly formulate twisted $D$-modules on a stack in characteristic zero using the de Rham stack \cite{gaitsgoryrozenblyum14}. In nonzero characteristic, different flavors of $D$-modules correspond to different variants of the de Rham stack. Grothendieck's differential operators correspond to the infinitesimal site $(X/S)_{inf}$ associated to $X/S$, defined below. 
We now recall Gaitsgory and Rozenblyum's formulations of twists \cite[§6.6]{gaitsgoryrozenblyum14} and prove that twistings of the infinitesimal site are equivalent to the tdo's defined in §\ref{subsection: tdo}.

\begin{definition}
	Given a morphism $X\to S$ of schemes, the infinitesimal prestack $X_{inf}: \mathrm{Aff}_{/S} \to \mathrm{Set}$ is defined by 
	\[ X_{inf}(Z) = X(Z_{red}).\]
\end{definition}

There is a natural map $\rho: X \to X_{inf}$ over $S$.

\begin{definition}
	A \emph{twisting of the infinitesimal site} of $X$ is an étale $\mathbb G_m$-gerbe on $X_{inf}$ equipped with a trivialization of its pullback along $\rho: X \to X_{inf}$.
\end{definition}

The \v{C}ech nerve $X_\bullet$ of $X \to X_{inf}$ is the formal completion of $X^{\times \bullet + 1}$ along the diagonal $X \to X^{\times \bullet + 1}$. 

\begin{proposition}\cite[Lemma 1.2.4]{gaitsgoryrozenblyum14}
	\label{lemma: cech-nerve}
	If $X/S$ is formally smooth, then $X_{inf} \simeq |X_\bullet|$ in $\infty$-groupoid-valued prestacks over $S$.
\end{proposition}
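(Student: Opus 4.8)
The plan is to realize $X_{inf}$ as the geometric realization (colimit in prestacks) of the simplicial prestack $[n] \mapsto X_n$, where $X_n$ is the formal completion of $X^{\times n+1}$ along its diagonal, with the face and degeneracy maps induced by the projections and diagonals. Since $\rho\colon X \to X_{inf}$ is (by construction of the infinitesimal prestack, as it only depends on the reduction $Z_{red}$) a map whose base change along itself is computed by formal completions, the \v{C}ech nerve of $\rho$ is indeed $X_\bullet$ as described in the excerpt; so the content of the statement is precisely that $\rho$ is an effective epimorphism, i.e.\ that $X_{inf} \simeq |X_\bullet|$. First I would recall that for a prestack valued in $\infty$-groupoids, testing a map for being an effective epimorphism (equivalently, testing the colimit identity $|X_\bullet| \xrightarrow{\ \sim\ } X_{inf}$) may be done affine-locally: it suffices to check that for every affine $Z \to S$ with $Z$ reduced—or more generally for every $Z$, after reducing—the map $|X_\bullet(Z)| \to X_{inf}(Z) = X(Z_{red})$ is an equivalence of $\infty$-groupoids. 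Because $X_{inf}$ and each $X_n$ take \emph{discrete} (set) values, both sides are in fact $0$-truncated, so it is enough to verify that the coequalizer $X_1(Z) \rightrightarrows X_0(Z)$ surjects onto $X(Z_{red})$ with the expected fiber identifications.

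The key step is the following lifting property, which is where formal smoothness enters: given an affine test scheme $Z$ over $S$, an element of $X_{inf}(Z) = X(Z_{red})$ is a map $Z_{red} \to X$ over $S$. We want to lift it to a map $Z \to X$, i.e.\ to an element of $X_0(Z) = X(Z)$. In general such a lift need not exist globally, but $Z$ is affine and $Z_{red} \hookrightarrow Z$ is a nilpotent (at least a pro-nilpotent, or on affines a union of nilpotent) thickening—precisely if $Z$ is Noetherian; in general one writes $Z$ as a limit of Noetherian affines and uses that $X$ is locally of finite presentation, or one argues directly since the ideal of nilpotents is locally nilpotent. Formal smoothness of $X/S$ then guarantees that the lift $Z \to X$ exists. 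This shows $X_0(Z) \to X_{inf}(Z)$ is surjective, i.e.\ $\rho$ induces a surjection on $\pi_0$ after any base change to a reduced (equivalently, arbitrary) affine.

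Next I would identify the $1$-simplices with the relation: two lifts $g_0, g_1\colon Z \to X$ of the same $\bar g\colon Z_{red}\to X$ agree after composing with $Z_{red}\hookrightarrow Z$, hence the induced map $Z \to X \times_S X$ factors through the formal completion of the diagonal, i.e.\ defines a point of $X_1(Z)$ whose two faces are $g_0$ and $g_1$; conversely any point of $X_1(Z)$ has its two faces agreeing on $Z_{red}$ since the diagonal and its formal neighborhood have the same reduction. Combined with the surjectivity above, this exhibits $X(Z_{red})$ as the set-theoretic coequalizer of $X_1(Z) \rightrightarrows X_0(Z)$, and the simplicial object $X_\bullet(Z)$ is then seen to be a groupoid object whose realization computes exactly this coequalizer (all higher coherences are automatic because everything is $0$-truncated). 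Passing back from affine test schemes to the prestack level—colimits of prestacks are computed objectwise—gives $X_{inf} \simeq |X_\bullet|$ in prestacks over $S$, which is the assertion.

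The main obstacle is the lifting step: one must be careful that it is \emph{formal} smoothness, not smoothness, that is assumed, so the lift along the thickening $Z_{red}\hookrightarrow Z$ must be produced directly from the infinitesimal lifting criterion rather than by, say, étale-local sections of a smooth map; and one must handle the case that $Z$ is not Noetherian, where $Z_{red}\hookrightarrow Z$ is only a filtered colimit of square-zero (or nilpotent) extensions, so the lift is built as a limit of successive infinitesimal lifts—this is exactly the argument underlying the cited Lemma 1.2.4 of \cite{gaitsgoryrozenblyum14}, and I would invoke it rather than reprove the convergence in detail.
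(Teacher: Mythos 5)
The paper does not actually prove this proposition---it is quoted from Gaitsgory--Rozenblyum---so your attempt is being measured against the argument behind that citation, and your overall skeleton is the right one: colimits of prestacks are computed objectwise, $X_\bullet(Z)$ is the \v{C}ech nerve of the map of \emph{sets} $X(Z)\to X(Z_{red})$, and the realization of the \v{C}ech nerve of a map of sets is its image, so the entire content is surjectivity of $X(Z)\to X(Z_{red})$ for every affine $Z$ over $S$. (Two small slips in the reduction: it does not suffice to test on reduced affines $Z$---there the claim is vacuous, since $X_{inf}(Z)=X(Z)$---and the $0$-truncatedness of $|X_\bullet(Z)|$ is not automatic from levelwise discreteness of a simplicial object, but holds here because the \v{C}ech nerve of a map of sets is the nerve of an equivalence relation; you do essentially say the latter.)

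The genuine gap is the lifting step for affines $Z$ whose nilradical is not nilpotent. Formal smoothness gives extensions along nilpotent (equivalently square-zero) thickenings only; for a nil ideal $N\subset A$ your proposed ``limit of successive infinitesimal lifts'' produces a compatible system of maps to $A/N^k$, i.e.\ a map to $\varprojlim_k A/N^k$, and this is not a map to $A$: for a nil, non-nilpotent ideal one can have $\bigcap_k N^k\neq 0$, and $A\to\varprojlim_k A/N^k$ need not be an isomorphism. So that fallback fails as stated, and deferring at exactly this point to ``the argument underlying the cited Lemma 1.2.4'' is circular, since that lemma is the statement being proved. To close the gap you must commit to one of the routes you only gesture at: either impose finiteness---in this paper's applications $X\to S$ is smooth, hence locally of finite presentation, so one writes an arbitrary affine $Z$ as a cofiltered limit of finite-type (hence Noetherian, with nilpotent nilradical) affines $Z_i$, uses $X(Z_{red})=\colim_i X((Z_i)_{red})$ (the nilradical commutes with filtered colimits) to descend the given point, lifts over $Z_i$ by formal smoothness, and pushes forward to $Z$---or adopt, as Gaitsgory--Rozenblyum do, conventions under which prestacks are evaluated only on (almost) finite type test affines, where $Z_{red}\hookrightarrow Z$ is a nilpotent thickening and the definition of formal smoothness applies directly. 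Without one of these, the argument is incomplete for merely formally smooth $X/S$ evaluated on all affines.
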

The proposition means that we may regard the $\infty$-category of abelian sheaves on $X_{inf}$ as the $\infty$-category of abelian sheaves on $X$ equipped with descent data along $X_\bullet$.

\begin{proposition}\cite[§6.6.4]{gaitsgoryrozenblyum14}
	If $X/S$ is smooth, then the groupoid of algebras of twisted differential operators on $X$ is equivalent to the groupoid of twistings of $X \to X_{inf}$.
\end{proposition}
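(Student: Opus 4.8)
The plan is to set up an equivalence of groupoids by matching each side with data on the \v{C}ech nerve $X_\bullet$. By Proposition~\ref{lemma: cech-nerve}, a twisting of $X \to X_{inf}$ is an étale $\mathbb G_m$-gerbe on $|X_\bullet|$ together with a trivialization of its restriction to $X = X_0$; unwinding descent along the simplicial object $X_\bullet$, this is the same as a $\mathbb G_m$-torsor $\mathcal P$ on $X_1$ (the formal completion of $X \times X$ along the diagonal), together with a trivialization of $i^*\mathcal P$ over $X$ (where $i\colon X \to X_1$ is the diagonal), plus a coherence isomorphism over $X_2$ satisfying a cocycle condition over $X_3$. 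The trivialization over $X_0$ lets us replace $\mathcal P$ by its sections, i.e. by an invertible sheaf on $X_1$ that is canonically trivialized along the diagonal; the multiplicativity datum over $X_2$ makes this into what is classically called a \emph{picard algebroid}-type structure, and I would phrase it precisely as: a line bundle $\mathcal P$ on the formal neighborhood $X_1$ of the diagonal, trivialized on the diagonal, together with an isomorphism $p_{01}^*\mathcal P \otimes p_{12}^*\mathcal P \xrightarrow{\sim} p_{02}^*\mathcal P$ on $X_2$ compatible with the diagonal trivialization and associative on $X_3$.

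Next I would produce a tdo out of this data. The key observation is the standard identification of $\Diff_X$ (as a left $\OO_X$-module) with $\varinjlim_n \mathcal{H}om_{\OO_X}(\OO_{X^{(n)}}, \OO_X)$, where $X^{(n)}$ is the $n$-th infinitesimal neighborhood of the diagonal, i.e. with $\OO$-linear functionals on $\OO_{X_1}$ supported on the diagonal; the algebra structure on differential operators comes precisely from the groupoid structure on $X_\bullet$ (composition uses $p_{02}$, $p_{01}$, $p_{12}$ on $X_2$). Twisting $\OO_{X_1}$ by the multiplicative line bundle $\mathcal P$ and using the $X_2$-datum to twist the composition, I would define $D := \varinjlim_n \mathcal{H}om_{\OO_X}(\mathcal P_n, \OO_X)$ with the twisted convolution product, where $\mathcal P_n$ is the restriction of $\mathcal P$ to $X^{(n)}$. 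The diagonal trivialization gives the unit map $\iota\colon \OO_X \to D$, the support condition is automatic, and the filtration by order matches the filtration by $n$; the perfect pairing condition in Definition~\ref{defn: tdo} follows because $\mathcal P$ is a line bundle, so $\gr D$ is identified with $\gr \Diff_X = \Gamma_{\OO_X} T_{X/S}$ exactly as in the untwisted case (twisting by a line bundle does not change associated graded). Conversely, given a tdo $D$, I would recover $\mathcal P$ by dualizing: each $D_{\leq n}$ is a finite locally free $\OO_X$-module (by the perfect pairing), its $\OO_X$-linear dual is a quotient of $\OO_{X_1}$, and the colimit/limit structure assembles $D$ back into a line bundle on $X_1$; the algebra structure on $D$ gives the $X_2$-comultiplication, and $\iota$ gives the diagonal trivialization. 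One checks these constructions are mutually inverse and functorial, hence give an equivalence of groupoids; compatibility with étale pullback is immediate since everything is built from $X_\bullet$, which is étale-local on $X$.

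The main obstacle I anticipate is bookkeeping the higher coherences correctly: to match an étale $\mathbb G_m$-\emph{gerbe} on $|X_\bullet|$ (as opposed to a mere cohomology class) with an honest sheaf of algebras, I must be careful that the descent datum on $X_\bullet$ for a gerbe involves not just a line bundle on $X_1$ and an iso on $X_2$ but also that the iso satisfies the associativity constraint strictly on $X_3$ (and that there are no further conditions because $\mathbb G_m$ is a sheaf of groups, so the gerbe is banded and the relevant simplicial data truncates). Equivalently, I need the statement that the groupoid of $\mathbb G_m$-gerbes on a prestack $\mathcal Y$ trivialized along a map from a scheme, pulled back to a \v{C}ech nerve, is computed by the evident 2-truncated cosimplicial groupoid --- this is where one invokes that $B\mathbb G_m$ is a $1$-truncated commutative group stack and that $\rho\colon X \to X_{inf}$ is an effective epimorphism with nerve $X_\bullet$. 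Once that descent statement is in hand, the translation into the convolution algebra on $\varinjlim \mathcal{H}om(\mathcal P_n, \OO_X)$ and the verification of Definition~\ref{defn: tdo} are essentially formal, following Beilinson--Bernstein's treatment in \cite[§2]{bb93} with line bundles on infinitesimal neighborhoods in place of their picard algebroids. I would also need to check that the gerbe being \emph{étale} (rather than fppf or flat) is harmless here, since on the infinitesimal site étale and flat descent for $\mathbb G_m$-gerbes agree on the reductions being the same space.
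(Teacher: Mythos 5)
Your proposal is correct and follows essentially the same route as the paper: both use Proposition \ref{lemma: cech-nerve} to unwind the trivialized gerbe along the \v{C}ech nerve $X_\bullet$ into a line bundle on the formal completion of the diagonal, trivialized on the diagonal and multiplicative via the $X_2$- and $X_3$-data, and then identify this with a tdo, with the perfect-pairing condition of Definition \ref{defn: tdo} coming from invertibility of the line bundle and the converse obtained by reversing the construction. The only difference is presentational: you make explicit the dualization $D = \varinjlim_n \mathcal{H}om_{\OO_X}(\mathcal P_n,\OO_X)$ with twisted convolution, whereas the paper directly regards the multiplicative line bundle (as a bimodule supported on the diagonal) as the tdo itself.
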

\begin{proof}
	By Lemma \ref{lemma: cech-nerve}, $|X_\bullet| \simeq X_{inf}$.
	An étale $\mathbb G_m$-gerbe on $X_\bullet$ with a trivialization of its pullback along $X \to X_\bullet$ has two different trivializations upon pullback to $\widehat{X \times X}$; hence we obtain a line bundle $D$ on $\widehat{X \times X}$.
	The pullback of $D$ along $X \to \widehat{X \times X}$ is trivialized by a bimodule homomorphism $\iota: \OO_X \cong D_{\leq 0} \subseteq D$.
	The face maps along the diagrams 
	\[X_3 \to X_2 \to X_0 = \widehat{X \times X}\]
	define an associative product on $D$; the degeneracy and face maps imply $\iota$ is a morphism of associative rings. Since $D$ is a line bundle on $\widehat{X \times X}$, the commutator pairings $\Sym^i\Omega^1_{X/S} \times \gr_i D \to \OO_X$ are isomorphisms. 
	Conversely, reversing the arguments above shows a tdo $D$ defines a line bundle on $X_0$ along with associated coherence data on $X_\bullet$, descending to a gerbe on $|X_\bullet|$.
\end{proof}

\begin{corollary}\cite[Corollary 6.5.3]{gaitsgoryrozenblyum14}
	\label{corollary: twistings-classified-by-fiber}
	The groupoid of twistings of the infinitesimal site is controlled by the complex of abelian groups
	\[ \tau^{\leq 2} \fib(\RGet(X_{inf},\mathbb G_m) \to \RGet(X,\mathbb G_m)).\]
\end{corollary}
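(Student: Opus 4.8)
The plan is to apply \cite[Corollary 6.5.3]{gaitsgoryrozenblyum14}, which classifies twistings for an arbitrary map of prestacks; for the reader's convenience I sketch the argument in the case at hand. Throughout, $B^2\mathbb G_m$ denotes the classifying sheaf of étale $\mathbb G_m$-gerbes, i.e.\ the étale sheaf of Picard $2$-groupoids obtained as the two-fold delooping of $\mathbb G_m$. For any prestack $\mathcal Y$ over $S$, étale descent identifies the $2$-groupoid of étale $\mathbb G_m$-gerbes on $\mathcal Y$ with the mapping space $\Map(\mathcal Y, B^2\mathbb G_m)$, and the (stable) Dold--Kan correspondence identifies this in turn with $\Omega^\infty\tau_{\geq 0}\bigl(\RGet(\mathcal Y,\mathbb G_m)[2]\bigr)$; concretely this is the $2$-truncated space with $\pi_0 = H^2_{\acute{e}t}(\mathcal Y,\mathbb G_m)$ (isomorphism classes of gerbes), with $\pi_1 = H^1_{\acute{e}t}(\mathcal Y,\mathbb G_m) = \Pic(\mathcal Y)$ at the trivial gerbe, and with $\pi_2 = H^0_{\acute{e}t}(\mathcal Y,\mathbb G_m)$.

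First I would unwind the definition of a twisting. By definition a twisting of $X \to X_{inf}$ is an étale $\mathbb G_m$-gerbe on $X_{inf}$ together with a trivialization of its pullback along $\rho$, which is precisely a point of the homotopy fiber
\[
\fib\bigl(\Map(X_{inf},B^2\mathbb G_m)\longrightarrow \Map(X,B^2\mathbb G_m)\bigr)
\]
taken over the basepoint given by the trivial gerbe on $X$; moreover the isomorphisms of twistings are exactly the $1$-morphisms of this fiber. Combined with the identification of tdo's with twistings of $X \to X_{inf}$ proved above, the groupoid of tdo's on $X$ is equivalent to this homotopy fiber. Next I would push the fiber through the identifications of the first paragraph: both $\Omega^\infty$ and the connective cover $\tau_{\geq 0}$ are right adjoints, hence preserve homotopy fibers, so the homotopy fiber above is $\Omega^\infty\tau_{\geq 0}\bigl(F[2]\bigr)$, where $F := \fib\bigl(\RGet(X_{inf},\mathbb G_m)\to \RGet(X,\mathbb G_m)\bigr)$. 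Since $\RGet(X,\mathbb G_m)$ and $\RGet(X_{inf},\mathbb G_m)$ are concentrated in non-negative cohomological degrees --- for $X_{inf}$ one uses $X_{inf}\simeq|X_\bullet|$ from Lemma \ref{lemma: cech-nerve} together with $\RGet(X_{inf},\mathbb G_m) = \lim_{\Delta}\RGet(X_\bullet,\mathbb G_m)$, a totalization of complexes in non-negative degrees --- so is $F$. Therefore $\tau_{\geq 0}(F[2]) = (\tau^{\leq 2}F)[2]$, and the groupoid of tdo's is controlled by $\tau^{\leq 2}F$, with $\pi_0 = H^2(F)$, $\pi_1 = H^1(F)$, and $\pi_2 = H^0(F)$, as claimed.

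The step I expect to need the most care is the very first one: $X_{inf}$ is not a scheme, so the classification of étale $\mathbb G_m$-gerbes on it must be set up on the big étale site of affines over $X_{inf}$ --- equivalently, via descent along the Čech nerve $X_\bullet$ of formal schemes --- and one must verify that $\RGet(X_{inf},\mathbb G_m)$ computed this way is cohomologically bounded below, which again comes down to the fact that a totalization over $\Delta$ of complexes in non-negative cohomological degrees stays in non-negative cohomological degrees. A minor point worth recording is that $\rho\colon X\to X_{inf}$ is an epimorphism of étale sheaves: formal smoothness of $X/S$ lets one lift a map $Z_{red}\to X$ to $Z\to X$ étale-locally on any affine $Z$ over $S$, and epimorphisms are left cancellable, so $H^0_{\acute{e}t}(X_{inf},\mathbb G_m)\hookrightarrow H^0_{\acute{e}t}(X,\mathbb G_m)$ and hence $H^0(F)=0$; thus the groupoid of twistings is an honest $1$-groupoid, consistent with $TDO(X/S)$ being a $1$-stack, although this refinement is not needed for the statement as written.
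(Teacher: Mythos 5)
Your proposal is correct and follows essentially the same route as the paper's own sketch: identify étale $\mathbb G_m$-gerbes on a prestack with maps to $B^2\mathbb G_m$, use (stable) Dold--Kan to rewrite the mapping space as $\tau^{\leq 0}\bigl(\RGet(-,\mathbb G_m)[2]\bigr)$ (the paper handles the prestack case by left Kan extension, you by descent along $X_\bullet$, which amounts to the same thing), and then recognize the groupoid of twistings as the homotopy fiber, which after shifting gives $\tau^{\leq 2}\fib\bigl(\RGet(X_{inf},\mathbb G_m)\to\RGet(X,\mathbb G_m)\bigr)$. The extra observations (connectivity of the fiber, $H^0(F)=0$ via $\rho$ being an epimorphism) are harmless additions not needed for the statement.
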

\begin{proof}
	It is well-known that gerbes banded by an abelian group object $A$ are classified by $H^i(-,A)$ for $i \in \{0,1,2\}$, going back to work of Giraud \cite{Gir71}, and thus twistings are classified by a mapping fiber on cohomology. In our setting an ideal reference is not available, so we sketch the proof.

	Let $B^2\mathbb G_m$ be the étale classifying stack of the group stack $B\mathbb G_m$; then $B^2\mathbb G_m$ represents the 2-stack of étale $\mathbb G_m$-gerbes \cite[§6.1.1]{gaitsgoryrozenblyum14}.
	For a scheme $Y/S$, $\RGet(Y,\mathbb G_m)[1]$ is the complex corresponding under Dold-Kan to the  mapping spectrum $\mathcal{M}aps(Y,B\mathbb G_m)$, so
	the mapping space $\Maps(Y,B^2\mathbb G_m)$ is given by 
	\[ \Maps(Y, B^2\mathbb G_m) \simeq \tau^{\leq 0}(\RGet(Y,\mathbb G_m)[2]).\]
	Left Kan extending gives the same result if $Y/S$ is a prestack.
	Now given a map $Y \to Y'$, the space of $\mathbb G_m$-gerbes on $Y'$ equipped with a trivialization of its pullback to $Y$ is the homotopy fiber of 
	\[ \Maps(Y',B^2\mathbb G_m) \to \Maps(Y, B^2\mathbb G_m).\qedhere\]
\end{proof}

If $S$ is of characteristic zero, then Gaitsgory and Rozenblyum calculated the groupoid of twistings as follows: $\RGet(X_{inf},\mathbb G_m)$ is computed by the logarithmic de Rham complex $d\log: \OO_X^\times \to \Omega^{\geq 1}_X$. Hence the desired fiber is exactly $\RGet(X,\Omega_X^{\geq 1})$, recovering the classical answer of Beilinson and Bernstein \cite[Lemma 2.1.6]{bb93}.
To compute the groupoid of twistings in nonzero characteristic, we must compute $\RGet(X_{inf},\mathbb G_m)$.

\section{F-divided structures and syntomic cohomology}

Fix a perfect field $k$ of characteristic $p > 0$. It is a well-known theorem of Katz that if $X/\Spec k$ is smooth, then modules over $\Diff_X$ are equivalent to $F$-divided sheaves of $\OO_X$-modules \cite[Theorem 1.3]{gieseker75}. This is an extension of Cartier's classical theorem on descent along the Frobenius. Ogus later applied this idea to compute cohomology of coherent sheaves on the infinitesimal site in terms of crystalline cohomology \cite{ogus75}. 
Berthelot extended Katz's theorem to smooth $X \to \Spec W_m(k)$ admitting a lift of Frobenius \cite{berthelot12}. In this section, we apply these ideas to compute $\RGet(X_{inf},\mathbb G_m)$ and thus compute the groupoid of twistings of the infinitesimal site.

From now on $S = \Spec W_m(k)$ for $1 \leq m < \infty$.

\subsection{Stratifications via a lift of Frobenius}

Assume for this subsection that $X \to S$ is smooth. Let $X^{(r)}$ be the $r$-fold Frobenius twist of $X$, defined to be the pullback of $X$ along the Frobenius automorphism of $W_m(k)$.
We further assume that $X$ has a lift $\varphi: X \to X^{(r)}$ of the relative Frobenius.

The method suggested in \cite[(4.12) Remark]{ogus75} for calculating $\RGet(X_{inf},-)$ is that the infinitesimal groupoid $X \times_{X_{inf}} X$ is the colimit of the \v{C}ech nerves of $X\to X^{(r)}$, in other words, that as $\infty$-prestacks, $X_{inf} = \colim_r X^{(r)}$. 
To treat the entire infinitesimal groupoid $X \times_{ X_{inf}} X$ and not just the formal neighborhood of the diagonal in $X \times X$, we must generalize Berthelot's calculations in \cite[§1]{berthelot12} slightly. 

We begin by recalling the main lemma of \cite{berthelot12}. Let $A$ be a commutative ring with an ideal $I \subseteq A$.
Given such, define $I^{(i)} = (a^{p^i} : a \in I)$ and $\widetilde{I^{(i)}} = I^{(i)} + p I^{(i-1)} + p^2 I^{(i-2)} + + \cdots + p^{i-1} I + p^i A$.
\begin{lemma}\cite[Lemma 1.3]{berthelot12}
	\label{lemma: berthelot-pth-power}
	If $a \in \widetilde{I^{(i)}}$, then $a^p \in \widetilde{I^{(i+1)}}$.
\end{lemma}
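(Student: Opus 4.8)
The plan is to prove this by writing $a$ in terms of the generators of the ideals $I^{(s)}$ that make up $\widetilde{I^{(i)}}$, expanding $a^p$ by the multinomial theorem, and bounding each resulting monomial. Concretely, any $a \in \widetilde{I^{(i)}}$ is a finite sum $a = \sum_\nu u_\nu$ with $u_\nu = p^{j_\nu} c_\nu\, x_\nu^{p^{\,i - j_\nu}}$, where $0 \le j_\nu \le i$, $c_\nu \in A$ and $x_\nu \in I$, so $u_\nu \in p^{j_\nu} I^{(i - j_\nu)}$; the key point is that $u_\nu^p = p^{\,p j_\nu} c_\nu^p\, x_\nu^{p^{\,i - j_\nu + 1}} \in p^{\,p j_\nu} I^{(i - j_\nu + 1)}$, i.e.\ raising to the $p$-th power trades the exponent $s = i - j_\nu$ for $s + 1$. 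I would then expand
\[ a^p = \sum_{\sum_\nu m_\nu = p} \frac{p!}{\prod_\nu m_\nu!}\, \prod_\nu u_\nu^{m_\nu} \]
and check that each monomial lies in $\widetilde{I^{(i+1)}} = \sum_\ell p^\ell I^{(i + 1 - \ell)}$.

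Two types of monomial occur. In a \emph{pure} one, some $m_{\nu_0}$ equals $p$ and the rest vanish, giving $u_{\nu_0}^p \in p^{\,p j_{\nu_0}} I^{(i + 1 - j_{\nu_0})} \subseteq p^{j_{\nu_0}} I^{(i + 1 - j_{\nu_0})}$, a summand of $\widetilde{I^{(i+1)}}$ (this uses only $p j_{\nu_0} \ge j_{\nu_0}$). In a \emph{mixed} monomial the coefficient $p!/\prod_\nu m_\nu!$ is divisible by $p$, since $p$ is prime and no $m_\nu$ equals $p$; writing $j_0 = \min\{ j_\nu : m_\nu \ge 1\}$, one factor of $\prod_\nu u_\nu^{m_\nu}$ lies in $I^{(i - j_0)}$ and the rest in $A$, so the product lies in $p^{\sum_\nu j_\nu m_\nu} I^{(i - j_0)}$, and pairing the extra factor of $p$ with $\sum_\nu j_\nu m_\nu \ge j_0 m_{\nu_0} \ge j_0$ places the monomial in $p^{\,1 + j_0} I^{(i + 1 - (1 + j_0))}$, again a summand of $\widetilde{I^{(i+1)}}$. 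Summing over all monomials gives $a^p \in \widetilde{I^{(i+1)}}$.

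I expect the only real subtlety to be this bookkeeping: in each mixed term the power of $p$ lost through the coefficient must be compensated by a gain of one level in the $I^{(\bullet)}$-filtration, which works thanks to the inequalities $p j_\nu \ge j_\nu$ and $\sum_\nu j_\nu m_\nu \ge j_0$ and the fact that $(x^{p^s})^p = x^{p^{s+1}}$ is a generator of $I^{(s+1)}$ whenever $x \in I$. The point to state carefully is the bottom of the filtration (the terms involving $I^{(0)}$ and $p^i A$): one should adopt the convention $I^{(0)} = I$, consistent with $I^{(i)} = (a^{p^i} : a \in I)$, so that the generators $x_\nu$ above may always be taken in $I$ and the exponent shift $s \mapsto s + 1$ is available throughout; with this in place every monomial produced above lands in $\widetilde{I^{(i+1)}}$ without an exceptional case.
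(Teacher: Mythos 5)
Your combinatorial core is sound, and since the paper does not prove this lemma itself (it is quoted from \cite{berthelot12}), the multinomial-expansion argument you give is essentially the intended one: $(x^{p^s})^p = x^{p^{s+1}}$ handles the pure terms, $p \mid p!/\prod_\nu m_\nu!$ handles the mixed terms, and your bookkeeping with $j_0 = \min\{j_\nu : m_\nu \ge 1\}$ correctly trades the extra factor of $p$ for one level of the filtration, landing each monomial in a summand $p^\ell I^{(i+1-\ell)}$.

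The gap is in the opening decomposition, exactly at the spot you flagged. Writing every $a \in \widetilde{I^{(i)}}$ as $\sum_\nu p^{j_\nu} c_\nu x_\nu^{p^{i-j_\nu}}$ with $x_\nu \in I$ does not cover the last summand of the definition, which is $p^i A$, not $p^i I$; adopting the convention $I^{(0)} = I$ does not repair this but silently replaces $\widetilde{I^{(i)}}$ by the strictly smaller ideal $\sum_{j \le i} p^j I^{(i-j)}$, so what you prove is the lemma for that smaller ideal. The $p^iA$ term cannot be discarded from the statement: it is precisely what absorbs the arbitrary error of a Frobenius lift, $\varphi(x) = y^p + p\cdot(\text{arbitrary section})$, in the application to Lemma \ref{lemma: frobenius on p-twisted symbolic powers}, so the lemma must be proved with it. (Relatedly, the displayed term $p^{i-1}I$ in the definition should be read as $p^{i-1}I^{(1)}$, as in Berthelot -- with the literal reading the statement already fails for $i=1$, $A = (\mathbb Z/p^2)[t]$, $I=(t)$, $a=t$ -- and your parametrization does produce $p^{i-1}I^{(1)}$ there, which is the correct reading.) The patch is short, and you should add it: allow additional generators $u_\nu = p^i c_\nu$ with $c_\nu \in A$ and no $x$-factor, and assume $i \ge 1$. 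Their pure $p$-th powers lie in $p^{pi}A \subseteq p^{i+1}A$ since $pi \ge i+1$; in a mixed monomial the multinomial coefficient still contributes one factor of $p$, and either some factor carries an $x_\nu$, in which case your estimate applies verbatim with $j_0$ the least $j_\nu$ among those factors, or all factors are of the new type, in which case the monomial lies in $p^{1+pi}A \subseteq p^{i+1}A$. With that case added, your argument proves the lemma as stated.
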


\begin{lemma}\cite[Lemma 1.4]{berthelot12}
	\label{lemma: frobenius on p-twisted symbolic powers}
	Let $\mathcal I_{r,n}$ be the ideal of the closed immersion $X^{(r)} \to \left(X^{(r)}\right)^{\times n}$. Then 
	$(\varphi^{\times n})^i (\mathcal I_{r,n}) \subseteq \widetilde{\mathcal I^{(i)}_{r-i,n}}$.
\end{lemma}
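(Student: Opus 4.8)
The plan is to induct on $i$, placing the geometric content in the case $i=1$ and invoking Berthelot's Lemma \ref{lemma: berthelot-pth-power} for the inductive step; this follows Berthelot's proof of Lemma 1.4 of \cite{berthelot12} almost verbatim, the only change being that $X\times X$ is replaced by $X^{\times n}$, which the argument does not see. First I would reduce to the affine case: the asserted inclusion is local on $(X^{(r-i)})^{\times n}$, and since $\varphi$ lifts a relative Frobenius, hence is a universal homeomorphism, one may cover $X^{(r-i)}$ by affines $V$, work over $V^{\times n}$, and assume $X=\Spec R$ with $R$ smooth over $W_m(k)$. Then $(X^{(s)})^{\times n}=\Spec\left(R^{(s)}\otimes_{W_m(k)}\cdots\otimes_{W_m(k)}R^{(s)}\right)$, $\mathcal I_{s,n}$ is the kernel of the multiplication map onto $R^{(s)}$, and $(\varphi^{\times n})^i$ becomes a ring homomorphism $R^{(r)\otimes n}\to R^{(r-i)\otimes n}$ built from $i$ successive Frobenius lifts.

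For $i=1$ ($i=0$ being trivial), I would reduce modulo $p$. Since $\varphi^{\times n}$ lifts the relative Frobenius $F$ of $(X^{(r-1)}_0)^{\times n}$ over $k$, its reduction is exactly $F^\sharp$; because $k$ is perfect $F^\sharp$ takes values in the subring of $p$-th powers, and because $F$ commutes with the diagonal immersion, $F^\sharp$ carries the diagonal ideal of $(X^{(r)}_0)^{\times n}$ into the ideal generated by $p$-th powers of elements of the diagonal ideal of $(X^{(r-1)}_0)^{\times n}$ — this amounts to the identity $(a-b)^p=a^p-b^p$ in characteristic $p$ applied to the differences that generate the diagonal ideal. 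Lifting back to $W_m(k)$ then gives $(\varphi^{\times n})(\mathcal I_{r,n})\subseteq\mathcal I^{(1)}_{r-1,n}+p\,\OO=\widetilde{\mathcal I^{(1)}_{r-1,n}}$, and the same holds at every Frobenius level, i.e.\ $(\varphi^{\times n})(\mathcal I_{s+1,n})\subseteq\widetilde{\mathcal I^{(1)}_{s,n}}$ for all $s$.

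For the inductive step I would factor $(\varphi^{\times n})^i=(\varphi^{\times n})\circ(\varphi^{\times n})^{i-1}$ and apply the inductive hypothesis, reducing the problem to the inclusion $(\varphi^{\times n})(\widetilde{\mathcal I^{(i-1)}_{s,n}})\subseteq\widetilde{\mathcal I^{(i)}_{s-1,n}}$ with $s=r-i+1$. Each summand $p^{j}\mathcal I^{(i-1-j)}_{s,n}$ of $\widetilde{\mathcal I^{(i-1)}_{s,n}}$ is generated by elements $p^{j}f^{p^{i-1-j}}$ with $f\in\mathcal I_{s,n}$; by the case $i=1$ one has $(\varphi^{\times n})(f)\in\widetilde{\mathcal I^{(1)}_{s-1,n}}$, whence $i-1-j$ applications of Lemma \ref{lemma: berthelot-pth-power} put $(\varphi^{\times n})(f)^{p^{i-1-j}}=(\varphi^{\times n})(f^{p^{i-1-j}})$ in $\widetilde{\mathcal I^{(i-j)}_{s-1,n}}$, so the image of the summand lies in $p^{j}\widetilde{\mathcal I^{(i-j)}_{s-1,n}}\subseteq\widetilde{\mathcal I^{(i)}_{s-1,n}}$.

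The main obstacle is the base case: one must pin down precisely how the lift $\varphi$ restricts to the relative Frobenius across the Frobenius twists $X^{(r)},X^{(r-1)},\dots,X^{(r-i)}$ and how that twist acts on the diagonal ideal. Everything else is bookkeeping given Lemma \ref{lemma: berthelot-pth-power}: checking that each $p^{j}\widetilde{\mathcal I^{(i-j)}_{s-1,n}}$ sits inside $\widetilde{\mathcal I^{(i)}_{s-1,n}}$ (using $p^{m}=0$ in $W_m(k)$ for the top-degree term), and checking that the reduction to an affine cover of $(X^{(r-i)})^{\times n}$ by products $V^{\times n}$ is harmless.
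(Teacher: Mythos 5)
Your proof is correct, but it takes a different route from the paper. You re-prove Berthelot's Lemma 1.4 from scratch in the $n$-factor setting: induction on $i$, with the base case $i=1$ handled by reducing mod $p$ and using that the reduction of $\varphi^{\times n}$ is the relative Frobenius of the product, so that the generating differences of the diagonal ideal pull back to $p$-th powers via $(a-b)^p=a^p-b^p$, and the inductive step handled by factoring $(\varphi^{\times n})^i=(\varphi^{\times n})\circ(\varphi^{\times n})^{i-1}$ and iterating Lemma \ref{lemma: berthelot-pth-power}; all of these steps check out, and your observation that Berthelot's argument ``does not see'' the number of factors is exactly right. The paper instead makes a two-line reduction: for $n>2$ the ideal $\mathcal I_{r,n}$ is generated by the pullbacks of $\mathcal I_{r,2}$ along the projections $(X^{(r)})^{\times n}\to X^{(r)}\times X^{(r)}$, these projections are compatible with $\varphi^{\times n}$ and $\varphi\times\varphi$, and pullback of a $p^i$-th power is a $p^i$-th power, so the statement follows from the $n=2$ case, which is literally Berthelot's Lemma 1.4. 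The paper's reduction is shorter and delegates all the characteristic-$p$ content to the citation; your version is self-contained and verifies that the cited proof genuinely extends, at the cost of redoing the Frobenius computation and the bookkeeping with $\widetilde{\mathcal I^{(i)}}$. One small remark: in your final paragraph the appeal to $p^m=0$ is unnecessary, since the top term $p^{j}\cdot p^{i-j}A=p^iA$ already appears in the definition of $\widetilde{\mathcal I^{(i)}_{s-1,n}}$.
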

\begin{proof}
	For $n> 2$, the ideal $\mathcal I_{r,n}$ is generated by the pullbacks of $\mathcal I_{r,2}$ along all projections $(X^{(r)})^{\times n}  \to X^{(r)} \times X^{(r)}$. Thus, it suffices to show the Lemma when $n=2$.
	The case $n=2$ is exactly \cite[Lemma 1.4]{berthelot12}, proved using Lemma \ref{lemma: berthelot-pth-power}.
\end{proof}

\begin{corollary} \label{corollary: cofinal-chains}
	Let $\mathcal I$ be the ideal of the closed immersion $X \to X^{\times n}$. Then the sequences of ideals $\{\mathcal I^N\}_{N\geq 1}$ and $\{(\varphi^{\times n})^r \mathcal I^{(r)})\}_{r\geq 1}$ are locally cofinal.
\end{corollary}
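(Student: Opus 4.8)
The plan is to check the assertion locally on $X$, where it becomes a concrete computation in a polynomial (or power-series) ring. So assume $X = \Spec R$ with $R$ smooth over $W_m(k)$ and coordinates $t_1,\dots,t_d$ trivializing $\Omega^1_{X/S}$. Then a neighbourhood of the diagonal in $X^{\times n}$ is the spectrum of a completion of $R^{\otimes n}$, the ideal $\mathcal I = \mathcal I_{0,n}$ is generated by the $d(n-1)$ coordinate differences $\xi_{i,j}$ (the difference of $t_i$ in the $j$-th and $(j{+}1)$-st tensor slots), and $\mathcal I^{(r)} = \mathcal I_{r,n}$ is generated by the analogous differences $\xi_{i,j}^{(r)}$ of the coordinates $t_i^{(r)}$ on $X^{(r)}$. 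Thus $(\varphi^{\times n})^r\mathcal I^{(r)}$ is the ideal of $R^{\otimes n}$ generated by the elements $(\varphi^{\times n})^r(\xi_{i,j}^{(r)})$.

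For the inclusion showing $\{(\varphi^{\times n})^r\mathcal I^{(r)}\}$ is cofinal below the $\mathcal I$-adic filtration, I would invoke Lemma \ref{lemma: frobenius on p-twisted symbolic powers} with $i = r$ to get $(\varphi^{\times n})^r\mathcal I^{(r)} \subseteq \widetilde{\mathcal I^{(r)}}$. Since $R^{\otimes n}$ is a $W_m(k)$-algebra we have $p^m = 0$, so $\widetilde{\mathcal I^{(r)}} = \sum_{0 \le j < m} p^j\mathcal I^{(r-j)}$, and as $\mathcal I^{(s)} = (a^{p^s} : a\in\mathcal I) \subseteq \mathcal I^{p^s}$ this gives $(\varphi^{\times n})^r\mathcal I^{(r)} \subseteq \mathcal I^{p^{\,r-m+1}}$. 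Hence for each $N$ there is $r$ with $(\varphi^{\times n})^r\mathcal I^{(r)} \subseteq \mathcal I^N$.

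For the reverse direction I would use that $\varphi$ lifts the relative Frobenius to identify the leading term of $(\varphi^{\times n})^r(\xi_{i,j}^{(r)})$: iterating $\varphi^\#(t^{(1)}) \equiv t^p \pmod p$ shows $(\varphi^{\times n})^r$ sends each $t_i^{(r)}$ to $t_i^{p^r} + p\cdot(\text{a polynomial in the }t\text{'s})$, and subtracting across consecutive slots, using that $\binom{p^r}{k}$ is divisible by $p$ for $0 < k < p^r$, yields $(\varphi^{\times n})^r(\xi_{i,j}^{(r)}) = \xi_{i,j}^{p^r} + p\,\eta_{i,j}$ with $\eta_{i,j}\in\mathcal I$. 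Modulo $(\varphi^{\times n})^r\mathcal I^{(r)}$ this reads $\xi_{i,j}^{p^r} \equiv -p\,\eta_{i,j}$, so any monomial $\prod \xi_{i,j}^{c_{i,j}}$ with $\sum \lfloor c_{i,j}/p^r\rfloor \ge m$ becomes divisible by $p^m = 0$ and hence lies in $(\varphi^{\times n})^r\mathcal I^{(r)}$. A pigeonhole argument ($\sum \lfloor c_{i,j}/p^r\rfloor > (\sum c_{i,j})/p^r - d(n-1)$) then shows $M \ge p^r(m + d(n-1))$ forces every degree-$M$ monomial in the $\xi_{i,j}$ into $(\varphi^{\times n})^r\mathcal I^{(r)}$, i.e.\ $\mathcal I^M \subseteq (\varphi^{\times n})^r\mathcal I^{(r)}$. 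Since $M(r) \to \infty$, combining the two directions gives the local cofinality.

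I expect the step needing the most care to be the reverse inclusion, specifically the leading-term computation of $(\varphi^{\times n})^r(\xi_{i,j}^{(r)})$ over the non-reduced base $W_m(k)$: one must make sure the $p$-adic error terms land in $\mathcal I$, not merely in the larger ideals $\widetilde{\mathcal I^{(\bullet)}}$, and keep track of them through $r$-fold iteration of $\varphi$. This is exactly the bookkeeping that Lemma \ref{lemma: berthelot-pth-power} streamlines; the remaining combinatorics is an elementary pigeonhole on monomial degrees.
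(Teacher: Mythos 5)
Your proof is correct, and its first half is exactly the paper's: both use Lemma \ref{lemma: frobenius on p-twisted symbolic powers} together with $p^m=0$ to place $(\varphi^{\times n})^r\mathcal I_{r,n}$ inside $\widetilde{\mathcal I^{(r)}}\subseteq \mathcal I^{p^{r-m+1}}$. Where you genuinely diverge is the reverse containment. The paper stays coordinate-free: it interposes the chain of Frobenius powers $\mathcal I^{(i)}=(a^{p^i}: a\in\mathcal I)$, observes that these are cofinal with the ordinary powers $\mathcal I^N$ because $\mathcal I$ is locally finitely generated, and then shows each generator $a^{p^r}$ of $\mathcal I^{(r)}$ lies in the ideal generated by $(\varphi^{\times n})^{r-m+1}\mathcal I_{r-m+1,n}$, by iterating $a^p\equiv\varphi(a)\bmod p$ through the sharpening ``$x\equiv y\bmod p^i\Rightarrow x^p\equiv y^p\bmod p^{i+1}$'' until the error is divisible by $p^m=0$. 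You instead pick \'etale coordinates, compute the leading term $(\varphi^{\times n})^r(\xi^{(r)}_{i,j})=\xi_{i,j}^{p^r}+p\,\eta_{i,j}$, and run a monomial pigeonhole to put $\mathcal I^M$ directly inside the pullback ideal for $M\geq p^r(m+d(n-1))$; both routes ultimately rest on the same two inputs (Berthelot's lemma one way, ``$\varphi$ lifts Frobenius mod $p$'' plus $p^m=0$ the other), but the paper's bookkeeping is shorter and needs no coordinates, while yours produces an explicit effective degree bound. Two small points: your local reduction is stated a bit loosely --- what you need is that near the diagonal the coordinate differences generate $\mathcal I$, which holds because $\Delta_X$ is open and closed in the preimage of $\Delta_{\mathbb A^d}$ under the \'etale map $X^{\times n}\to\mathbb A^{dn}$ --- and the worry in your last paragraph about the error terms $\eta_{i,j}$ landing in $\mathcal I$ is actually moot, since your pigeonhole only uses that each replacement $\xi_{i,j}^{p^r}\equiv -p\,\eta_{i,j}$ contributes a factor of $p$, so that $p^{\sum\lfloor c_{i,j}/p^r\rfloor}=0$; the membership $\eta_{i,j}\in\mathcal I$ (true, by the divisibility of $\binom{p^r}{k}$ and the slot-difference argument) is never needed.
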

\begin{proof}
	Since $X \to S$ is locally of finite type, the ideal $\mathcal I$ is locally finitely generated, and thus the powers $\{\mathcal I^N\}_{N \geq 1}$ and symbolic powers $\{\mathcal I^{(i)}\}_{i \geq 1}$ are locally cofinal. Since $p^m = 0$ on $S$, Lemma \ref{lemma: frobenius on p-twisted symbolic powers} implies
	\[\varphi^{\times r}(\mathcal I_r) \subseteq \widetilde{\mathcal I^{(r)}} \subseteq \mathcal I^{(r-m)}
	\] 
	for $r \geq m$. Conversely, if $a \in I$, then $a^p \equiv \varphi(a) \mod p$.
	It is well-known that for $x,y$ elements of a commutative ring and $p$ a prime number, if $x\equiv y \mod p^i$ then $x^p \equiv y^p \mod p^{i+1}$.
	Thus $a^{p^r} \equiv (\varphi^{r-m+1}(a))^{p^{m-1}}  = \varphi^{r-m+1}(a^{p^{m-1}})\mod p^m$, so 
	\[
		\mathcal I^{(r)} \subseteq ((\varphi^{\times n})^{r-m+1} \mathcal I^{(r-m+1)})
	\] 
	Thus the Frobenius pullbacks $\{(\varphi^{\times n})^r\mathcal I^{(r)}\}_{r \geq 1}$ and symbolic powers $\{\mathcal I^{(i)}\}_{i \geq 1}$ of $\mathcal I$ are cofinal.
\end{proof}

\begin{proposition} \label{proposition: derived-inverse-limit}
	If $X\to S = \Spec W_m(k)$ is a smooth morphism of finite type with a lift of Frobenius $\varphi$, then 
	\[ \RGet(X_{inf},\mathbb G_m) \simeq R\varprojlim_\varphi \RGet(X^{(r)},\mathbb G_m).\]
\end{proposition}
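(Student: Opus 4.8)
The plan is to make rigorous the heuristic $X_{inf}\simeq\colim_r X^{(r)}$ of Ogus by realizing $X_{inf}$ as a filtered colimit of the Frobenius twists $X^{(r)}$ and then applying $\RGet(-,\mathbb G_m)$, which sends colimits of prestacks to limits and therefore turns that colimit into the inverse limit in the statement. The only inputs beyond formal manipulation of homotopy limits are Corollary~\ref{corollary: cofinal-chains}, already in hand, and flat descent for $\mathbb G_m$.

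First I would use Proposition~\ref{lemma: cech-nerve} to write $X_{inf}\simeq|X_\bullet|$, with $X_n$ the formal completion of $X^{\times n+1}$ along the diagonal, and apply $\RGet(-,\mathbb G_m)$ to get $\RGet(X_{inf},\mathbb G_m)\simeq R\varprojlim_{[n]\in\Delta}\RGet(X_n,\mathbb G_m)$. Writing $X_n=\colim_N V(\mathcal I_n^N)$ in prestacks, where $\mathcal I_n$ is the ideal of $X\hookrightarrow X^{\times n+1}$, gives $\RGet(X_n,\mathbb G_m)\simeq R\varprojlim_N\RGet(V(\mathcal I_n^N),\mathbb G_m)$. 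Now let $C_n^{(r)}=X\times_{X^{(r)}}\cdots\times_{X^{(r)}}X$ ($n+1$ factors) be the $n$-th term of the \v{C}ech nerve of the iterated Frobenius lift $\varphi^r\colon X\to X^{(r)}$; working locally on $X$ we may assume $X/S$ separated, so the diagonal $X^{(r)}\hookrightarrow(X^{(r)})^{\times n+1}$ is a closed immersion and $C_n^{(r)}\hookrightarrow X^{\times n+1}$ is its preimage under $(\varphi^{\times n+1})^r$, hence also a closed immersion. Corollary~\ref{corollary: cofinal-chains}, with $n$ replaced by $n+1$, says precisely that $\{V(\mathcal I_n^N)\}_N$ and $\{C_n^{(r)}\}_r$ are locally cofinal systems of closed subschemes of $X^{\times n+1}$; since the colimit $\colim_N V(\mathcal I_n^N)$ is computed locally and formal completions glue, these two ind-schemes coincide, and therefore $\RGet(X_n,\mathbb G_m)\simeq R\varprojlim_r\RGet(C_n^{(r)},\mathbb G_m)$.

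Assembling and commuting homotopy limits, $\RGet(X_{inf},\mathbb G_m)\simeq R\varprojlim_r\big(R\varprojlim_{[n]\in\Delta}\RGet(C_n^{(r)},\mathbb G_m)\big)$, where for fixed $r$ the inner term is the descent complex of $\mathbb G_m$-cohomology along $\varphi^r$. It remains to identify this inner term with $\RGet(X^{(r)},\mathbb G_m)$: here I would check that $\varphi^r$ is an fppf cover — modulo $p$ it is the $r$-fold relative Frobenius of the smooth $k$-scheme $X\times_S\Spec k$, which is finite and faithfully flat, and both properties lift across the nilpotent thickening $p^m=0$ of the base — and then invoke fppf cohomological descent for $\mathbb G_m$ together with the comparison $R\Gamma_{fppf}(-,\mathbb G_m)\simeq\RGet(-,\mathbb G_m)$ of Grothendieck, to conclude that the descent complex recovers $\RGet(X^{(r)},\mathbb G_m)$, with structure maps induced by $\varphi$. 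This yields $\RGet(X_{inf},\mathbb G_m)\simeq R\varprojlim_\varphi\RGet(X^{(r)},\mathbb G_m)$.

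The step I expect to require the most care is this last descent. The Frobenius lift $\varphi^r$ is radicial, hence \emph{not} an étale cover, so one cannot recycle the étale descent implicit in Proposition~\ref{lemma: cech-nerve} and must genuinely appeal to flat descent for $\mathbb G_m$ and to the agreement of fppf with étale cohomology. A secondary, purely bookkeeping point is verifying that the local cofinality furnished by Corollary~\ref{corollary: cofinal-chains} upgrades to a global identification of the two ind-schemes $\colim_N V(\mathcal I_n^N)$ and $\colim_r C_n^{(r)}$; this is routine, but worth spelling out so that the interchange of limits is on firm ground.
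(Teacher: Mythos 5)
Your proposal is correct and follows essentially the same route as the paper's proof: identify the \v{C}ech nerve of $X \to X_{inf}$ degreewise with the colimit over $r$ of the \v{C}ech nerves of $\varphi^r\colon X \to X^{(r)}$ via Corollary~\ref{corollary: cofinal-chains}, use that $\RGet(-,\mathbb G_m)$ turns these colimits into limits, and conclude by fppf descent for $\mathbb G_m$ (Grothendieck's Hilbert 90). You merely spell out details the paper leaves implicit, such as the flatness and surjectivity of $\varphi^r$ and the global identification of the two ind-schemes.
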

\begin{proof}
	Let $(X/X^{(r)})_\bullet$ be the \v{C}ech nerve of $\varphi^r: X \to X^{(r)}$.
	By Corollary \ref{corollary: cofinal-chains},
	$(X/X_{inf})_\bullet$ is the degreewise colimit of $(X/X^{(r)})_\bullet$.
	Since cohomology commutes with inverse limts, we have $\RGet(X_{inf},\mathbb G_m) = R\varprojlim \RGet((X/X^{(r)})_\bullet, \mathbb G_m)$.
	The functor $\RGet(-,\mathbb G_m)$ is an fppf sheaf by Grothendieck's version of Hilbert 90 \cite[Theorem 11.7]{GGK68}, so 
	\[ R\varprojlim \RGet((X/X^{(r)})_\bullet, \mathbb G_m) \simeq R\varprojlim_\varphi \RGet(X^{(r)},\mathbb G_m).\qedhere\]
\end{proof}

\begin{remark}
	This method provides a proof of Ogus' result that for smooth and proper $X /k$, the cohomology of the infinitesimal site with respect to the structure sheaf is $H^\bullet(X_{inf}, \OO) \cong H^\bullet(X,\OO)^s$, the subspace of $H^\bullet(X,\OO)$ on which the Frobenius acts as an isomorphism \cite{ogus75}. Indeed, more generally we obtain 
	\[ R\Gamma(X_{inf},\OO_{X_{inf}}) \simeq R\varprojlim_\varphi R\Gamma(X, \OO_X).\]
	By Lemma \ref{lemma: tate-module-sequence},
	this reduces to Ogus' formula if the cohomology groups $H^i(X,\OO_X)$ are finite-dimensional.	
\end{remark}

We recall the derived completion of an object $A$ with respect to an endomorphism $a$ is the homotopy inverse limit 
\[ A^\wedge_a = R\varprojlim_n \cone(\begin{tikzcd}
	A & A
	\arrow["{a^n}", from=1-1, to=1-2]
\end{tikzcd}),\]
and thus $A^\wedge_a$ is the cone of $(R\varprojlim_a A) \to A$; see Appendix \ref{appendix: completion}.

\begin{corollary}\label{corollary: phi-adic-completion}
	If $X\to S = \Spec W_n(k)$ is a smooth morphism of finite type with a lift of Frobenius $\varphi$, then 
	\[ \fib(\RGet(X_{inf},\mathbb G_m) \to \RGet(X,\mathbb G_m)) \simeq \RGet(X,\mathbb G_m)^\wedge_\varphi[-1].\]
\end{corollary}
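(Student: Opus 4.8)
The plan is to combine Proposition~\ref{proposition: derived-inverse-limit} with the description, recalled just above, of the derived $\varphi$-adic completion as the cone of $(R\varprojlim_\varphi -) \to (-)$. The first task is to recognize the tower appearing in Proposition~\ref{proposition: derived-inverse-limit} as a constant tower. Since $k$ is perfect, the Frobenius automorphism $\sigma$ of $W_m(k)$ is an isomorphism, so for each $r$ the projection $X^{(r)} \to X$ — the base change of $\Spec \sigma^r$ — is an isomorphism of schemes (though not of $S$-schemes), inducing an isomorphism $\RGet(X^{(r)}, \mathbb G_m) \simeq \RGet(X, \mathbb G_m)$. By functoriality of Frobenius twists these isomorphisms are compatible with the transition maps $\varphi^{(r-1)} \colon X^{(r-1)} \to X^{(r)}$ of the tower, so after applying them the pro-object $\{\RGet(X^{(r)}, \mathbb G_m)\}_r$ becomes the constant pro-object on $\RGet(X,\mathbb G_m)$ all of whose transition maps are a single endomorphism, which we name $\varphi$; concretely, $\varphi$ is pullback along the given Frobenius lift, transported to $\RGet(X,\mathbb G_m)$ via $\Spec\sigma$. (When $S = \Spec k$, this $\varphi$ is pullback along the absolute Frobenius of $X$, which acts by multiplication by $p$ on $H^1(X,\mathbb G_m) = \Pic X$; this is the source of the factor $\Pic(X)^\wedge_p$ in the introduction.) I would then check that under this identification, and under the equivalence $X_{inf} \simeq \colim_r X^{(r)}$ underlying Proposition~\ref{proposition: derived-inverse-limit}, the map $\RGet(X_{inf},\mathbb G_m) \to \RGet(X,\mathbb G_m)$ induced by $\rho \colon X \to X_{inf}$ is precisely the canonical map $R\varprojlim_\varphi \RGet(X,\mathbb G_m) \to \RGet(X,\mathbb G_m)$ projecting to the $r=0$ term, since $\rho$ is the structure map of $X = X^{(0)}$ into the colimit.

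Granting this, the rest is formal. Write $A = \RGet(X,\mathbb G_m)$. By Proposition~\ref{proposition: derived-inverse-limit} and the previous paragraph, $\RGet(X_{inf},\mathbb G_m) \simeq R\varprojlim_\varphi A$ compatibly with the maps to $A$. By the description of derived completion recalled above (see also Appendix~\ref{appendix: completion}), $A^\wedge_\varphi$ is the cone of $R\varprojlim_\varphi A \to A$, so rotating the triangle gives $\fib(R\varprojlim_\varphi A \to A) \simeq A^\wedge_\varphi[-1]$. Hence
\[ \fib(\RGet(X_{inf},\mathbb G_m) \to \RGet(X,\mathbb G_m)) \simeq A^\wedge_\varphi[-1] = \RGet(X,\mathbb G_m)^\wedge_\varphi[-1].\]

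The substance of the argument lies in the first paragraph: coherently identifying the various $\RGet(X^{(r)},\mathbb G_m)$ with $\RGet(X,\mathbb G_m)$ so that every transition map becomes one endomorphism of a fixed complex, and checking that the structure map of the resulting inverse limit is the projection to the bottom of the tower. This is pure bookkeeping with Frobenius twists over $W_m(k)$ and with the colimit presentation of $X_{inf}$ from Corollary~\ref{corollary: cofinal-chains}; I expect it to be the only real obstacle, since everything downstream is a formal manipulation of fiber sequences together with the definition of $\varphi$-adic completion.
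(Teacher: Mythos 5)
Your proposal is correct and follows the same route as the paper, whose proof is simply ``Follows from Proposition~\ref{proposition: derived-inverse-limit} and definitions'': one identifies $\RGet(X_{inf},\mathbb G_m)$ with $R\varprojlim_\varphi \RGet(X,\mathbb G_m)$ and rotates the triangle defining the cone description of $(-)^\wedge_\varphi$ from Appendix~\ref{appendix: completion}. The bookkeeping you spell out in your first paragraph (using perfectness of $k$ to trivialize the Frobenius twists and identifying $\rho^*$ with the projection to the bottom of the tower) is exactly the implicit content the paper leaves to the reader.
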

\begin{proof}
	Follows from Proposition \ref{proposition: derived-inverse-limit} and definitions.
\end{proof}

\subsection{Syntomic cohomology in prime characteristic}

\begin{definition} \label{defn: syntomic-cohomology}
	Let $X$ be a scheme. The syntomic cohomology of $X$ in weight one is defined by
	\[ R\Gamma(X,\mathbb Z_p(1)) = \RGet(X,\mathbb G_m)^{\wedge}_p[-1].\]
\end{definition}

In characteristic $p$, pullback by Frobenius on $\mathbb G_m$ is multiplication by $p$. Thus $\varphi$-adic completion is $p$-adic completion:
\begin{corollary}\label{corollary: char p classification}
	If $X \to \Spec k$ is a smooth variety, then the groupoid of TDO's on $X$ is controlled by 
	\[ \tau_{\leq 2} R\Gamma(X,\mathbb Z_p(1)).\]
\end{corollary}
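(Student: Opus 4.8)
The plan is to combine the stacky classification already in hand with the $\varphi$-adic computation of $\RGet(X_{inf},\mathbb G_m)$. By Corollary~\ref{corollary: twistings-classified-by-fiber}, the groupoid of twistings of the infinitesimal site—hence, by the preceding proposition, the groupoid of tdo's—is controlled by $\tau^{\leq 2}\fib(\RGet(X_{inf},\mathbb G_m)\to\RGet(X,\mathbb G_m))$. The point of Corollary~\ref{corollary: char p classification} is to identify this fiber, through degree $2$, with the weight one syntomic complex of Definition~\ref{defn: syntomic-cohomology}. Since both $\RGet(X_{inf},\mathbb G_m)$ and the truncation functor are built locally on $X_{\acute{e}t}$, the first step is to reduce to the case where $X$ admits a lift $\varphi: X\to X^{(r)}$ of relative Frobenius; smooth affine $X/k$ always does, étale-locally even with $r=1$, so one may work on an étale cover and then descend. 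Concretely, one observes that the assignment $U\mapsto \fib(\RGet(U_{inf},\mathbb G_m)\to\RGet(U,\mathbb G_m))$ is a sheaf of complexes on $X_{\acute{e}t}$, as is $U\mapsto \RGet(U,\mathbb G_m)^\wedge_\varphi[-1]$ (derived completion commutes with the homotopy limits computing étale cohomology of a cover, at least after the bounded truncation we care about), so a local equivalence globalizes.

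On such an étale-local piece, Corollary~\ref{corollary: phi-adic-completion} gives precisely
\[
	\fib(\RGet(X_{inf},\mathbb G_m)\to\RGet(X,\mathbb G_m)) \simeq \RGet(X,\mathbb G_m)^\wedge_\varphi[-1].
\]
Now I invoke the characteristic-$p$ special feature: on $\mathbb G_m$ over an $\mathbb F_p$-scheme, relative Frobenius pullback is the $p$-th power map, i.e.\ multiplication by $p$ on the abelian sheaf $\mathbb G_m$ (written multiplicatively, $a\mapsto a^p$). More precisely, the lift $\varphi$ induces on $\RGet(-,\mathbb G_m)$ the endomorphism which on $H^0$ is the $p$-th power and on all higher cohomology is multiplication by $p$, because $\varphi^*$ agrees with the absolute Frobenius twist and the absolute Frobenius acts as $p$ on $\mathbb G_m$-cohomology in positive characteristic. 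Hence the $\varphi$-adic derived completion of $\RGet(X,\mathbb G_m)$ coincides with its $p$-adic derived completion, and
\[
	\RGet(X,\mathbb G_m)^\wedge_\varphi[-1] \simeq \RGet(X,\mathbb G_m)^\wedge_p[-1] = R\Gamma(X,\mathbb Z_p(1)).
\]
Applying $\tau^{\leq 2}$ and feeding the result back into Corollary~\ref{corollary: twistings-classified-by-fiber} yields the claim.

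I expect the main obstacle to be the bookkeeping around descent and the identification of the Frobenius action, rather than anything deep. First, one must be careful that ``controlled by'' means an equivalence of groupoids (or of the associated $2$-truncated spaces), so the étale-local comparison of complexes must be promoted to an equivalence of the classifying gerbe-groupoids; this is routine given that $B^2\mathbb G_m$ is an étale stack, but deserves a sentence. Second, the claim that $\varphi$ acts as multiplication by $p$ on $\RGet(X,\mathbb G_m)$ uses that the \emph{chosen} lift $\varphi$ of relative Frobenius reduces mod the structural relations to the absolute Frobenius, and one should note this does not depend on the lift since any two lifts are homotopic as endomorphisms of the $p$-complete object—or, more simply, the whole construction only ever uses the pro-system $\{X^{(r)}\}$ up to the cofinality of Corollary~\ref{corollary: cofinal-chains}, which is insensitive to the choice. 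Finally, one wants $\tau^{\leq 2}$ of a derived $p$-completion to agree with the $p$-completion of $\tau^{\leq 2}$ in the relevant range; this holds because completion only affects cohomology by extensions involving $\varprojlim$ and $\varprojlim^1$ of the same degrees, so the truncation commutes with it through degree $2$. Assembling these remarks gives the corollary.
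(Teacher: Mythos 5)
Your argument is correct and is essentially the paper's: combine Corollary \ref{corollary: twistings-classified-by-fiber} with Corollary \ref{corollary: phi-adic-completion} and the observation that Frobenius pullback on $\mathbb G_m$ is multiplication by $p$, so $\varphi$-adic and $p$-adic completion of $\RGet(X,\mathbb G_m)$ agree, giving $\tau_{\leq 2}R\Gamma(X,\mathbb Z_p(1))$. The one superfluous step is your étale-local reduction and descent: over $S = \Spec k$ (the case $m=1$) the relative Frobenius $X \to X^{(1)}$ exists canonically on all of $X$ --- the ``lift'' hypothesis is only a genuine constraint for $m>1$ --- so Corollary \ref{corollary: phi-adic-completion} applies globally and no descent or independence-of-lift discussion is needed.
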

\begin{proof}
	Follows from Corollaries \ref{corollary: twistings-classified-by-fiber}, \ref{corollary: phi-adic-completion}, and Definition \ref{defn: syntomic-cohomology}.
\end{proof}

Suppose the Néron-Severi group $\NS(X) = \Pic(X)/\Pic^0(X)$ is a finitely generated abelian group, which holds if $X \to \Spec k$ is proper by the Theorem of the Base \cite{SPEC-theorem-of-the-base}. 
As $\Pic^0(X)$ is $p$-divisible, the $p$-adic completion of $\Pic(X)$ is identified with $\NS(X) \otimes \mathbb Z_p$.
Lemma \ref{lemma: tate-module-sequence} gives a short exact sequence 
\begin{equation*} 
	0 \to \NS(X)\otimes \mathbb Z_p \to H^2(X,\mathbb Z_p(1)) \to T_p\Br(X) \to 0,
\end{equation*}
which appears in \cite[(5.8.5)]{illusie79}.

If $k$ is algebraically closed, Illusie also showed that $R\Gamma(X,\mathbb Z_p(1))$ may be computed using the crystalline cohomology of $X$.
Let $W \Omega_X$ be the de Rham-Witt complex of $X/k$ and $W \Omega_X^{\geq 1} \subseteq W\Omega_X$ be the stupid truncation above degree 1. Let $F': W \Omega^{\geq 1} \to W\Omega^{\geq 1}$ be the crystalline Frobenius divided by $p$. Then there is a long exact sequence 
\[\begin{tikzcd}[cramped,column sep=small]
	\cdots & {H^\bullet(X, \mathbb Z_p(1))} & {H^\bullet(X, W\Omega^{\geq 1}_X)} & {H^\bullet(X, W\Omega^{\geq 1}_X)} & \cdots
	\arrow[from=1-1, to=1-2]
	\arrow[from=1-2, to=1-3]
	\arrow["{1 - F'}", from=1-3, to=1-4]
	\arrow[from=1-4, to=1-5]
\end{tikzcd}\]
\cite[p.\ 627, (5.5.2)]{illusie79}. 
Using that $k$ is closed under Artin-Schreier extensions, for $i = 1,2$ the above sequence splits into the short exact sequences \cite[p.\ 629, §5.8]{illusie79}
\[\begin{tikzcd}[cramped,column sep=small]
	0 & {H^i(X, \mathbb Z_p(1))} & {H^i(X, W\Omega^{\geq 1}_X)} & {H^i(X, W\Omega^{\geq 1}_X)} & 0
	\arrow[from=1-1, to=1-2]
	\arrow[from=1-2, to=1-3]
	\arrow["{1 - F'}", from=1-3, to=1-4]
	\arrow[from=1-4, to=1-5].
\end{tikzcd}\]

\begin{example}
	Suppose $k$ is algebraically closed and $X/k$ is an ordinary K3 surface.
	Then 
	$H^2(X,\mathbb Z_p(1)) \cong \mathbb Z_p^{20}$,
	so $T_p\Br(X) \cong \mathbb Z_p^{20 -\rho}$
	where $\rho = \rank \NS(X)$ is the Picard rank of $X$ \cite[p.\ 653, §7.2]{illusie79}.
	The Picard rank is often much less than 20; for example, the theorem of M.\ Noether implies that the Picard rank of a very general fourfold in $\mathbb P^3$ is $1$ \cite[Exposé XIX]{sga-7-2}. Note that $X$ being very general requires such $X$ to be defined over a transcendental extension of $\overline{\mathbb F_p}$, as the Picard rank of a K3 over $\overline{\mathbb F_p}$ is even by the Tate conjecture.
\end{example}	

\begin{example}[Zariski-locally trivial tdo's]
	A tdo $D$ is \emph{Zariski-locally trivial} if there is a Zariski cover $U_\alpha \subset X$ such that $D|_{U_\alpha} \cong \Diff_{U_\alpha}$. By \cite[Corollary IV.2.6]{milne-etale}, if $X \to \Spec k$ is smooth, then $\Br(X) \to \Br(K(X))$ is injective, 
	so the image of $[D]$ in $T_p\Br(X)$ must be zero.
	Thus, $D$ lies in the image of $\Pic(X)^\wedge_p$. Again assuming that $\NS(X)$ is finitely generated, every element of $\Pic(X)^\wedge_p \cong \NS(X) \otimes \mathbb Z_p$ is Zariski-locally trivial, as we may choose a cover of $X$ which simultaneously trivializes a finite generating set for $\NS(X)$.

	For example, if $k$ is algebraically closed, Zariski-locally trivial tdo's on $\mathbb P^n$ are classified by $\NS(\mathbb P^n) \otimes \mathbb Z_p \cong \mathbb Z_p$, which was found by D.P.\ Faurot using explicit Čech cocycles \cite{faurot94}. As $T_p\Br(\mathbb P^n) = T_p\Br(k) = 0$, all tdo's on $\mathbb P^n$ are Zariski-locally trivial.
\end{example}

\subsection{Comparison with syntomic cohomology in mixed characteristic}

Consider now a smooth morphism $X \to \Spec W_m(k)$ of finite type,
and let $X_0$ be the reduction of $X$ modulo $p$.
If $X$ admits a lift of Frobenius, then Corollary \ref{corollary: phi-adic-completion} describes the groupoid of tdo's on $X$.
In mixed characteristic, the Frobenius on $\mathbb G_m$ no longer agrees with multiplication by $p$, but they may be compared.
We show in Theorem \ref{theorem: mixed-characteristic-syntomic-comparison} that there is a morphism from $\tau_{\leq 2}R\Gamma(X,\mathbb Z_p(1))$ to the complex controlling tdo's on $X$, which may or may not be an isomorphism, depending on action of Frobenius on $R\Gamma(X_0,\OO_{X_0})$.
\begin{theorem}\label{theorem: mixed-characteristic-syntomic-comparison}
	Suppose that $X \to \Spec W_m(k)$ is a smooth morphism of finite type where $p = \ch k > 2$.
	\begin{enumerate}[(i)]
		\item The mapping fiber
	\[ \fib(\RGet(X_{inf},\mathbb G_m) \to \RGet(X,\mathbb G_m))\] 
	is derived $p$-complete, so that there is an essentially unique morphism
	\[
	R\Gamma(X,\mathbb Z_p(1)) \to \fib(\RGet(X_{inf},\mathbb G_m) \to \RGet(X,\mathbb G_m))
	\]
	fitting into a factorization
	\[\begin{tikzcd}[cramped]
		{\RGet(X,\mathbb G_m)[-1]} \\
		{R\Gamma(X,\mathbb Z_p(1))} & {\fib(\RGet(X_{inf},\mathbb G_m) \to \RGet(X,\mathbb G_m))}
		\arrow[from=1-1, to=2-2]
		\arrow[from=1-1, to=2-1]
		\arrow[dashed, from=2-1, to=2-2]
	\end{tikzcd}.\]
	\item Suppose that pullback by the relative Frobenius $\varphi$ of $X_0$ induces an isomorphism 
	\[ \varphi^*: H^i(X_0,\OO_{X_0}) \to H^i(X_0,\OO_{X_0})\]
	for $0 \leq i \leq 3$. Then reduction modulo $p$ defines an equivalence
	\[\begin{tikzcd}[cramped]
		{\tau_{\leq 2} \fib(\RGet(X_{inf},\mathbb G_m) \to \RGet(X,\mathbb G_m))} \\
		{\tau_{\leq 2} \fib(\RGet((X_0)_{inf},\mathbb G_m) \to \RGet(X_0,\mathbb G_m))}
		\arrow["\sim"', from=1-1, to=2-1]
	\end{tikzcd},\]
	so that tdo's on $X$ are classified by $\tau_{\leq 2} R\Gamma(X_0,\mathbb Z_p(1))$.

	\item Suppose that pullback by the relative Frobenius $\varphi$ of $X_0$ is zero on $H^i(X_0,\OO_{X_0})$ for $1 \leq i \leq 3$. Then the morphism from (i) induces an isomorphism on $H^1$ and $H^2$; in particualr, isomorphism classes of tdo's on $X$ are in bijection with $H^2(X, \mathbb Z_p(1))$.
	\end{enumerate}
\end{theorem}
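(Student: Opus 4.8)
The plan is to move the whole problem into the Frobenius‑lift setting of Corollary~\ref{corollary: phi-adic-completion} and then compare $\varphi$‑adic with $p$‑adic completions by homological algebra. Since $X\to\Spec W_m(k)$ is smooth it admits étale‑local lifts of the relative Frobenius, so I first fix a hypercover $U_\bullet\to X$ by affines each carrying such a lift; the mapping fiber $F:=\fib(\RGet(X_{inf},\mathbb G_m)\to\RGet(X,\mathbb G_m))$ is intrinsic and is the limit over $U_\bullet$ of the fibers $F|_{U_n}$, and derived $p$‑completeness together with the vanishing/isomorphism statements below are all stable under such limits, so it suffices to argue on each $U_n$ and descend.

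For (i): on a term $U$ with lift $\varphi$, Corollary~\ref{corollary: phi-adic-completion} gives $F|_U\simeq \RGet(U,\mathbb G_m)^\wedge_\varphi[-1]$. The key algebraic input is that, since $p$ is odd, $\log\colon 1+p\OO_X\xrightarrow{\ \sim\ }p\OO_X$ is an isomorphism of abelian sheaves, so $1+p\OO_X$ is killed by $p^{m-1}$; equivalently $\varphi(u)/u^{p}\in 1+p\OO_X$ for units $u$, whence $\varphi\circ[p^{m-1}]=[p^{m}]$ on $\mathbb G_m$ and the relation $p^{m-1}(\varphi-p)=0$ holds on $\RGet(U,\mathbb G_m)$ and on all its subquotients. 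Given this, $\RGet(U,\mathbb G_m)^\wedge_\varphi$ is derived $p$‑complete: since $R\varprojlim_p$ commutes with limits and with $\cone$, one has $R\varprojlim_p\big(\RGet(U,\mathbb G_m)^\wedge_\varphi\big)=R\varprojlim_n\cone\!\big(\varphi^{n}\colon R\varprojlim_p\RGet(U,\mathbb G_m)\to R\varprojlim_p\RGet(U,\mathbb G_m)\big)$, and on $R\varprojlim_p\RGet(U,\mathbb G_m)$ multiplication by $p$ is invertible, so the relation forces $\varphi=p$ there and each cone vanishes. Thus $F$ is derived $p$‑complete, and the asserted factorization is the universal property of derived $p$‑completion applied to the connecting map $\RGet(X,\mathbb G_m)[-1]\to F$ of the defining fiber sequence.

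For (ii) and (iii) I filter $A:=\RGet(X,\mathbb G_m)$ by the tower $A^{(j)}:=\RGet(X_j,\mathbb G_m)$, $X_j$ the reduction of $X$ mod $p^{j}$: the short exact sequences $1\to 1+p^{j}\OO_X\to\OO_{X_{j+1}}^\times\to\OO_{X_j}^\times\to1$ give fiber sequences $N_j\to A^{(j+1)}\to A^{(j)}$ with $N_j\simeq\RGet(X_0,\OO_{X_0})$ (via $1+p^{j}a\mapsto a\bmod p$) and bottom $A^{(1)}=\RGet(X_0,\mathbb G_m)$; this tower is $\varphi$‑equivariant, with $\varphi$ acting by multiplication by $p$ on $\RGet(X_0,\mathbb G_m)$ (Corollary~\ref{corollary: char p classification}) and by the relative‑Frobenius pullback $\varphi^{*}$ on each $N_j$ — precisely the Frobenius on coherent cohomology of $X_0$ in the hypotheses. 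For (ii), the reduction map $F\to F_0:=\fib(\RGet((X_0)_{inf},\mathbb G_m)\to\RGet(X_0,\mathbb G_m))$ has fiber $\fib\big(\RGet(X_{inf},1+p\OO)\to\RGet(X,1+p\OO)\big)$, which the Frobenius‑descent argument of Proposition~\ref{proposition: derived-inverse-limit}, applied to the quasicoherent sheaf $1+p\OO\cong\OO_X/p^{m-1}$ instead of $\mathbb G_m$, identifies with $\RGet(X,\OO_X/p^{m-1})^\wedge_\varphi[-1]$; the hypothesis that $\varphi^{*}$ is an isomorphism on $H^i(X_0,\OO_{X_0})$ for $0\le i\le 3$, together with the $p$‑adic filtration of $\OO_X/p^{m-1}$ (graded pieces $\OO_{X_0}$ with $\varphi$ acting by $\varphi^{*}$), makes $\varphi$ an isomorphism on $H^i(X,\OO_X/p^{m-1})$ for $i\le 2$, so $\varphi$‑adic completion, hence $\fib(F\to F_0)$, vanishes in degrees $\le 2$; thus $\tau_{\le 2}F\simeq\tau_{\le 2}F_0=\tau_{\le 2}R\Gamma(X_0,\mathbb Z_p(1))$ (Corollary~\ref{corollary: char p classification}), and (ii) follows with Corollary~\ref{corollary: twistings-classified-by-fiber}. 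For (iii), the morphism of (i) is a shift of the canonical comparison $A^\wedge_p\to A^\wedge_\varphi$, whose fiber is $\cone(R\varprojlim_pA\to R\varprojlim_\varphi A)$ by the octahedral axiom; this comparison is an equivalence on the bottom layer $\RGet(X_0,\mathbb G_m)$ (there $\varphi=p$), while on each $N_j$ one has $R\varprojlim_pN_j\simeq 0$ and $R\varprojlim_\varphi N_j$ has vanishing cohomology in degrees $1,2,3$ (Milnor sequence plus $\varphi^{*}=0$ on $H^{i}(X_0,\OO_{X_0})$, $1\le i\le3$); reassembling along the finitely many layers shows $\cone(R\varprojlim_pA\to R\varprojlim_\varphi A)$ vanishes in degrees $1,2,3$, whence the morphism of (i) is an isomorphism on $H^1$ and $H^2$, and the claim about isomorphism classes is its effect on $H^2$ with Corollary~\ref{corollary: twistings-classified-by-fiber}.

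The step I expect to be the main obstacle is the low‑degree bookkeeping in (ii) and (iii): one must track exactly how the $\varphi$‑equivariant $p$‑adic filtration of $\RGet(X,\mathbb G_m)$ interacts with $\varphi$‑adic completion, so that hypotheses on $\varphi^{*}$ through degree $3$ yield isomorphisms precisely through degree $2$, the extra degree being consumed by the connecting maps of the filtration. A secondary difficulty is globalization, since there is no global Frobenius lift in general, so Corollary~\ref{corollary: phi-adic-completion}, the relation $p^{m-1}(\varphi-p)=0$, and the comparison $A^\wedge_p\to A^\wedge_\varphi$ must be built on the hypercover $U_\bullet$ and shown to descend.
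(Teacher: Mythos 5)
Your part (i) is essentially correct and takes a different route from the paper: you verify derived $p$-completeness étale-locally using Corollary \ref{corollary: phi-adic-completion} together with the relation $p^{m-1}(\varphi-p)=0$ (which does hold for any local lift, since $(1+p\OO)^{p^{m-1}}=1$ when $p>2$ and $p^m=0$), and this is legitimate because derived $p$-complete objects are closed under the limit over the hypercover, so the incompatibility of the local lifts is harmless; the only unproven ingredient is the descent statement $\RGet(X_{inf},\mathbb G_m)\simeq\varprojlim_\Delta \RGet((U_\bullet)_{inf},\mathbb G_m)$, which is believable but not free. The paper instead gets (i) with no lifts at all, from the exponential sequence $0\to p\OO\to\OO^\times\to\OO_{X_0}^\times\to 0$: the $p\OO$-column is killed by $p^{m-1}$, the characteristic-$p$ column is $p$-complete by Corollary \ref{corollary: phi-adic-completion} (where the Frobenius is canonical), and complete objects form a triangulated subcategory.

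For (ii) and (iii), however, there are genuine gaps. First, both arguments presuppose a global $\varphi$-action on $A=\RGet(X,\mathbb G_m)$ and on its $p$-adic tower: the objects $A^\wedge_\varphi$, $R\varprojlim_\varphi A$, and a ``$\varphi$-equivariant tower'' $\RGet(X_j,\mathbb G_m)$ simply do not exist without a global Frobenius lift, and the hypercover fix you defer to at the end is not routine -- the identifications coming from Corollary \ref{corollary: phi-adic-completion} depend on choices of lifts on the $U_n$ that are not compatible with the simplicial maps, so unlike in (i) (where you only checked a limit-stable \emph{property}) they do not descend to the identifications you need. Worse, the strategy of ``argue on each $U_n$ and descend'' cannot use the hypotheses of (ii)/(iii) at all: those hypotheses concern $\varphi^*$ on the \emph{global} groups $H^i(X_0,\OO_{X_0})$, while on an affine $U_n$ the higher coherent cohomology vanishes; and cohomological vanishing in a fixed range of degrees is not preserved by totalization over a hypercover. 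The paper's missing idea is precisely to avoid any Frobenius in mixed characteristic: split off $p\OO$ by the exponential sequence, and note that after the $p$-adic filtration everything is a characteristic-$p$ object ($R\Gamma(X_0,\OO_{X_0})$, $\RGet(X_0,\mathbb G_m)$) on which the relative Frobenius is canonical and global, so Corollary \ref{corollary: phi-adic-completion} and Lemma \ref{lemma: tate-module-sequence} apply directly.

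Two further points would bite even if you granted yourself a global lift. In (iii), your layerwise computation uses $R\varprojlim_\varphi N_j$ and the Milnor sequence, whose degree-$1$ term contains $\varprojlim^1$ of $(H^0(X_0,\OO_{X_0}),\varphi^*)$; your hypotheses say nothing about $H^0$, and for non-proper $X$ this $\varprojlim^1$ can be nonzero (Mittag--Leffler fails for Frobenius on, e.g., $k[t]$), so the claimed vanishing in degree $1$ does not follow. The paper works instead with derived completions via Lemma \ref{lemma: tate-module-sequence}, whose $H^i$ involves only $H^i$ and $H^{i+1}$, so $H^0$ never enters. In (ii), your bookkeeping is one degree short: assembling the filtration before completing only gives $\varphi$ an isomorphism on $H^{\leq 2}(X,\OO_X/p^{m-1})$, hence vanishing of $\fib(F\to F_0)$ only in degrees $\leq 2$, which yields injectivity but not surjectivity on $H^2(F)\to H^2(F_0)$. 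The fix is to complete each graded piece first (completion is exact/triangulated): the hypothesis through degree $3$ kills $H^{\leq 2}$ of each $R\Gamma(X_0,\OO_{X_0})^\wedge_\varphi$, hence $H^{\leq 3}$ of the shifted fiber, which is exactly what the $\tau_{\leq 2}$-equivalence requires.
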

\begin{proof}
	Let $Z$ be any scheme over $W_m(k)$, and let $Z_1$ be its reduction modulo $p$. Then there is a short exact sequence of sheaves 
	\begin{equation}\label{eq: exponential-triangle}
		\begin{tikzcd}
		0 & {p\OO_Z} & {\OO_Z^\times} & {\OO_{Z_1}^\times} & 0
		\arrow[from=1-1, to=1-2]
		\arrow["\exp", from=1-2, to=1-3]
		\arrow[from=1-3, to=1-4]
		\arrow[from=1-4, to=1-5]
		\end{tikzcd},
	\end{equation}
	where $\exp: p\OO_Z \cong 1 + p\OO_Z$ makes sense because the $p$-adic exponential has radius of convergence $p^{-1/(p-1)} > p^{-1}$ for $p > 2$.
	Applying this short exact sequence to the \v{C}ech nerve $X_\bullet$ of $X \to X_{inf}$ and the corresponding reduction $(X_0)_\bullet$ of $X_0 \to (X_0)_{inf}$ gives a diagram 
	\begin{equation}\label{eq: exponential-inf-diagram}
	\begin{tikzcd}[cramped, column sep = scriptsize]
	{R\Gamma(X_{inf},p\OO_{X_{inf}})} & {\RGet(X_{inf},\mathbb G_m)} & {\RGet((X_0)_{inf},\mathbb G_m)} & {} \\
	{R\Gamma(X,p\OO_X)} & {\RGet(X,\mathbb G_m)} & {\RGet(X_0,\mathbb G_m)} & {}
	\arrow[from=1-1, to=1-2]
	\arrow[from=1-1, to=2-1]
	\arrow[from=1-2, to=2-2]
	\arrow[from=2-1, to=2-2]
	\arrow[from=2-2, to=2-3]
	\arrow[from=1-2, to=1-3]
	\arrow[from=1-3, to=2-3]
	\arrow["{+1}", from=1-3, to=1-4]
	\arrow["{+1}", from=2-3, to=2-4]
	\end{tikzcd}
	\end{equation}
	of distinguished triangles. Now $p^{m-1}$ acts as zero on the left column, and derived completion is triangulated, so the fiber of the left column is derived $p$-complete. 
	By Corollary \ref{corollary: phi-adic-completion}, the fiber of $\RGet((X_0)_{inf},\mathbb G_m) \to \RGet(X_0, \mathbb G_m)$ is derived $p$-complete .
	Again since derived completion is triangulated, the fiber of the middle column is derived $p$-complete. The universal property of derived completion (Corollary \ref{corollary: completion-left-adjoint}) gives the desired factorization through $R\Gamma(X, \mathbb Z_p(1))$, completing proof of (i).

	Now suppose we are in the situation of (ii).
	Taking $\tau_{\leq 2}$ of \eqref{eq: exponential-inf-diagram}, it suffices to show that 
	\[ 
		\tau_{\leq 2} \left(R\Gamma(X_{inf},p\OO_{X_{inf}})\to R\Gamma(X,p\OO_X)\right)
	\]
	is an equivalence.
	By taking the $p$-adic filtration, it suffices to show 
	\[ \tau_{\leq 2}\left(R\Gamma((X_0)_{inf},\OO_{(X_0)_{inf}}) \to R\Gamma(X_0, \OO_{X_0})\right)\]
	is an equivalence, that is, $\tau_{\leq 2} (R\Gamma(X_0,\OO_{X_0})^\wedge_\varphi ) \simeq 0$.
	If $\varphi^*$ is an isomorphism on $H^i(X_0,\OO_{X_0})$ for $0 \leq i \leq 3$, then Lemma \ref{lemma: tate-module-sequence} shows $H^i(R\Gamma(X_0,\OO_{X_0})^\wedge_\varphi) = 0$ for $0 \leq i \leq 2$, as desired.

	Now suppose we are in the situation of (iii). 
	Taking $p$-adic completion of \eqref{eq: exponential-triangle} where $Z = X$ gives a triangle 
	\[ R\Gamma(X, p\OO_X)^\wedge_p[-1] \to R\Gamma(X,\mathbb Z_p(1)) \to R\Gamma(X_0,\mathbb Z_p(1)) \to^{+1}.\]
	Comparing this triangle to \eqref{eq: exponential-inf-diagram} and taking the $p$-adic filtration on $p\OO_X$ implies that we must show 
	\[H^i(R\Gamma(X_0,\OO_{X_0})^\wedge_p) \to H^i(R\Gamma(X_0, \OO_{X_0})^\wedge_\varphi)\]
	is an isomorphism for $i \in \{1,2\}$.
	The $k$-vector space $R\Gamma(X_0,\OO_{X_0})$ is $p$-complete, while the hypothesis that $\varphi^*: H^i(X_0,\OO_{X_0}) \to H^i(X_0,\OO_{X_0})$ is zero for $1 \leq i \leq 3$ implies 
	\[ R^i\Gamma(X_0,\OO_{X_0}) \to H^i(R\Gamma(X_0,\OO_{X_0})^\wedge_\varphi)\]
	is an isomorphism for $i\in \{1,2\}$ by Lemma \ref{lemma: tate-module-sequence}.
\end{proof}

\begin{remark}
	If one only cares about isomorphism classes of tdo's, then only $H^2(X_0,\OO_{X_0})$ and $H^3(X_0,\OO_{X_0})$ require control.
\end{remark}

\begin{remark}
	If $X \to \Spec W_m(k)$ is proper, then the hypothesis on $H^3(X_0,\OO_{X_0})$ in Theorem \ref{theorem: mixed-characteristic-syntomic-comparison}, parts (ii) and (iii) above may be dropped.
	By Lemma \ref{lemma: tate-module-sequence}, 
	$H^3(X_0,\OO_{X_0})$ only appears in the calculation of $\tau_{\leq 2}(R\Gamma(X_0,\OO_{X_0})^\wedge_{\varphi})$ through the Tate module $T_\varphi H^3(X_0,\OO_{X_0})$.
	The Tate module of a semilinear endomorphism of a finite-dimensional vector space is zero, 
	so if $X\to\Spec W_m(k)$ is proper, $T_\varphi H^3(X_0,\OO_{X_0}) = 0$.
\end{remark}

\begin{example}
	Lemma \ref{lemma: tate-module-sequence} furnishes us with a map 
	\[ \Pic(X)^\wedge_p \to H^2(X,\mathbb Z_p(1)) \to \{\text{tdo's on }X\}/\cong.\]
	If $\mathcal L_r$ is a $p$-adic Cauchy sequence in $\Pic(X)$, then the associated tdo is constructed as follows:
	$\Diff_{\leq i}(\mathcal L_r)$ is independent of $r$ for $r$ large enough, as differential operators of order at most $i$ commute with $p^r$th powers for $r$ large enough.
	Setting $D_{\leq i}$ to be the stable value of $\Diff_{\leq i}(\mathcal L_r)$ defines the filtered pieces of a tdo.
\end{example}

\section{Explicit constructions of tdo's in prime characteristic}

Suppose $S = \Spec k$, the case of prime characteristic. 
In this section, we produce an explicit bijection between tdo's on $X/S$ up to isomorphism and $H^2(X,\mathbb Z_p(1))$, independent of the method of Gaitsgory-Rozenblyum discussed in §\ref{subsection: stacky approach}. We also compare twisted Grothendieck differential operators to twisted crystalline differential operators.

\subsection{Differential operators on torsors}
The ring of Grothendieck differential operators $\Diff_X$ in characteristic $p$ is a union of matrix algebras $\End_{\OO_{X^{(r)}}}(\OO_X)$ \cite{chase74, berthelot96}. When $\Diff_X$ is replaced by a tdo, we show instead that $\Diff_X$ is a union of Azumaya algebras.

\begin{definition}
	Given a tdo $D$ on $X$, define $D^r$ to be the centralizer of $\OO_{X^{(r)}}$ in $D$. 
\end{definition}
By Corollary \ref{corollary: cofinal-chains}, we have $D = \cup_{r \geq 1} D^r$.

\begin{remark}
	The subalgebra $\Diff_X^r = \End_{\OO_{X^{(r)}}}(\OO_X)\subseteq \Diff_X$ is the image of Berthelot's differential operators of level $r-1$ \cite{berthelot96}.
\end{remark}

\begin{proposition}\label{prop: centralizer-is-azumaya}
	$D^r$ is an Azumaya algebra over $X^{(r)}$ which splits along the relative Frobenius $X \to X^{(r)}$.
\end{proposition}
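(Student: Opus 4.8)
The plan is to realize $D^r$ as the pushforward to $X^{(r)}$ of a line bundle carrying a convolution product on the finite flat $X^{(r)}$-scheme $X\times_{X^{(r)}}X$, and then to read off both assertions from the classical correspondence between such algebras and Azumaya algebras split by a faithfully flat cover; the untwisted case $D=\Diff_X$, where $D^r=\End_{\OO_{X^{(r)}}}(\OO_X)$ \cite{chase74,berthelot96}, is the model to keep in mind. Since $k$ is perfect of characteristic $p$, the $r$-fold relative Frobenius $\varphi^r\colon X\to X^{(r)}$ is finite and faithfully flat, locally free of rank $p^{nr}$ ($n=\dim X$); hence $X\to X^{(r)}$ is an fppf cover, $X^{(r)}\simeq|(X/X^{(r)})_\bullet|$, and $\pi\colon X\times_{X^{(r)}}X\to X^{(r)}$ is finite locally free of rank $p^{2nr}$. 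In local étale coordinates the relative diagonal ideal of $X\times_{X^{(r)}}X$ is generated by the $\xi_i=x_i\otimes1-1\otimes x_i$, and $\xi_i^{p^r}=x_i^{p^r}\otimes1-1\otimes x_i^{p^r}=0$ because $x_i^{p^r}$ is a section of $\OO_{X^{(r)}}$; so this ideal is nilpotent, $X\times_{X^{(r)}}X$ is a genuine closed subscheme of $\widehat{X\times X}$ (not merely a formal one), and $\widehat{X\times X}=\colim_r X\times_{X^{(r)}}X$ by Corollary \ref{corollary: cofinal-chains}.

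Next I would identify $D^r$. Recall from §\ref{subsection: stacky approach} that a tdo is the same as a line bundle $\mathcal D$ on $\widehat{X\times X}$ with a unit along the diagonal and an associative multiplication built from the face maps of $X_\bullet$; for $\Diff_X$ one has $\mathcal D=\OO_{\widehat{X\times X}}$. An operator $Q\in D$ centralizes $\OO_{X^{(r)}}$ exactly when it ``lives on'' $X\times_{X^{(r)}}X$: the commutator with $f\in\OO_X$ acts on the bimodule $\mathcal D$ as multiplication by $p_1^\ast f-p_2^\ast f$ (up to sign), and as $f$ runs over $\OO_{X^{(r)}}$ these elements generate the ideal $(\xi_i^{p^r})_i$ of $X\times_{X^{(r)}}X$ in $X\times X$. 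Unwinding this gives $D^r\cong\pi_\ast\mathcal L_r$ for a line bundle $\mathcal L_r$ on $X\times_{X^{(r)}}X$ (for $D=\Diff_X$, $\mathcal L_r$ is the relative dualizing sheaf of $p_1\colon X\times_{X^{(r)}}X\to X$), the two $\OO_X$-actions on $\mathcal D$ collapse to a single $\OO_{X^{(r)}}$-action, and the multiplication of $\mathcal D$ — equivalently composition of operators, equivalently the face maps of the Čech nerve of $\varphi^r$ — restricts to an associative, unital, $\OO_{X^{(r)}}$-bilinear product on $\pi_\ast\mathcal L_r$. Because $D$ is a tdo, $\mathcal D$ and hence $\mathcal L_r$ really is a line bundle.

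Finally I would invoke the classical equivalence, going back to Giraud \cite{Gir71} and underlying the theory of twisted sheaves, between $\mathbb G_m$-gerbes on a scheme $Y$ trivialized over a faithfully flat cover $\tau\colon T\to Y$, line bundles $\mathcal L$ on $T\times_YT$ with associative unital multiplication, and Azumaya algebras $A$ on $Y$ split by $T$ together with a chosen splitting module $\mathcal M$ on $T$; under it $A=\pi_\ast\mathcal L$ and $\tau^\ast A\cong\End_{\OO_T}(\mathcal M)$. (Conceptually: the twisting defining $D$ is a gerbe $\mathcal G$ on $X_{inf}$ with a trivialization of $\rho^\ast\mathcal G$; since $X_{inf}=\colim_r X^{(r)}$ and $\rho$ factors through $\varphi^r$, its restriction to $X^{(r)}$ is a gerbe trivialized along $\varphi^r$, which is exactly this package with $A=D^r$.) Taking $Y=X^{(r)}$, $T=X$, $\tau=\varphi^r$, $\mathcal L=\mathcal L_r$ shows that $D^r=\pi_\ast\mathcal L_r$ is an Azumaya algebra over $X^{(r)}$ with $(\varphi^r)^\ast D^r\cong\End_{\OO_X}(\mathcal M)$ for a vector bundle $\mathcal M$ on $X$; in particular $D^r$ splits along $X\to X^{(r)}$. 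The rank is $p^{2nr}=(p^{nr})^2$, as it must be, and for $D=\Diff_X$ one recovers $\mathcal M=(\varphi^r)^\ast\OO_X$ and $D^r=\End_{\OO_{X^{(r)}}}(\OO_X)$.

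The main obstacle is the middle step: verifying precisely that the centralizer of $\OO_{X^{(r)}}$ inside the abstract differential algebra $D$ is $\pi_\ast\mathcal L_r$ with the convolution product — that is, transporting the ring structure of $D$ through the Čech-nerve presentation of $X_{inf}$ and pinning down $\mathcal L_r$ — and arranging the last step to be a genuine instance of the Giraud dictionary rather than merely an étale-local statement. One could instead argue, via Corollary \ref{corollary: char p classification}, that every tdo becomes isomorphic to $\Diff_X$ over a strictly henselian local scheme and hence étale-locally on $X$, so that $D^r$ is étale-locally $\End_{\OO_{X^{(r)}}}(\OO_X)$ and the Azumaya property descends along the fppf cover $X\to X^{(r)}$; but the splitting still has to come from the trivialization datum of the twisting, since an étale-locally split Azumaya algebra need not be split, so the gerbe picture cannot really be sidestepped here.
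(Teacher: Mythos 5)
Your route (identify $D^r$ with the pushforward of a kernel line bundle on $X\times_{X^{(r)}}X$ and appeal to a gerbe/Azumaya dictionary) is genuinely different from the paper's, but as written it has a gap at each of its two hinges. (1) The identification $D^r\cong\pi_*\mathcal L_r$: the centralizer of $\OO_{X^{(r)}}$ is the subsheaf of sections of $\mathcal D$ \emph{annihilated} by the ideal $I_r$ of $X\times_{X^{(r)}}X$, i.e.\ the $I_r$-torsion subsheaf, not the restriction $\mathcal D\otimes\OO/I_r$; that this torsion subsheaf is an invertible $\OO_{X\times_{X^{(r)}}X}$-module is not formal, and it is exactly where the content of the proposition lies. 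The way to verify it is the local computation that constitutes the paper's entire proof: in étale coordinates $\gr(D^r)$ has basis the symbols $\partial_1^{(i_1)}\cdots\partial_n^{(i_n)}$ with $i_j<p^r$, so $D^r$ is locally free of rank $p^{2r\dim X}$ over $\OO_{X^{(r)}}$ (and locally generated over $\OO_{X\times_{X^{(r)}}X}$ by $\partial_1^{(p^r-1)}\cdots\partial_n^{(p^r-1)}$). So the gerbe detour does not save you this computation. (2) The ``classical equivalence'' you invoke is not Giraud's dictionary as stated. The descent datum of a $\mathbb G_m$-gerbe trivialized along a finite flat cover $T\to Y$ is a line bundle $\mathcal L$ on $T\times_YT$ with a cocycle isomorphism $p_{12}^*\mathcal L\otimes p_{23}^*\mathcal L\cong p_{13}^*\mathcal L$ on the triple fiber product, not an associative unital product on $\pi_*\mathcal L$; passing between the two requires Grothendieck duality for the finite flat projection (the dualizing twist you yourself notice in the case $D=\Diff_X$). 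Without that twist the assertion ``$\pi_*\mathcal L$ is Azumaya split by $T$'' is false as stated: take the trivial gerbe and $\mathcal L=\OO_{T\times_YT}$ with its evident associative unital multiplication; then $\pi_*\mathcal L=\tau_*\OO_T\otimes_{\OO_Y}\tau_*\OO_T$ is commutative of rank $(\rank\tau_*\OO_T)^2$ and is not Azumaya unless $T\to Y$ is étale --- in particular not when $T\to Y$ is the relative Frobenius, the very case at hand. So the last step cannot be cited; it has to be proved, with the duality bookkeeping done correctly.

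Your closing claim that ``the gerbe picture cannot really be sidestepped'' because a splitting must come from the trivialization datum is also mistaken: a canonical splitting module sits inside the algebra itself, namely $D^r$ regarded as an $\OO_X$-module via $\iota:\OO_X\to D$. The paper's proof simply checks that the map $\OO_X\otimes_{\OO_{X^{(r)}}}D^r\to\End_{\OO_X}(D^r)$ given by left and right multiplication is an isomorphism by passing to associated graded (after the rank count above); this yields Azumaya-ness and the splitting along $X\to X^{(r)}$ in one stroke, with no gerbes. Finally, your fallback through Corollary \ref{corollary: char p classification} is not circular, but it cuts against the stated aim of §4 (a construction independent of the Gaitsgory--Rozenblyum machinery), and spreading triviality of the full tdo from a strict henselization to an honest étale neighborhood --- $D$ being an infinite union of coherent pieces --- would itself need an argument.
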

\begin{proof}
	The claim is local. If $X$ has étale coordinates $t_1,\ldots, t_n$ dual to derivations $\partial_1,\ldots, \partial_n$, then $\gr(D^r)$ has a basis of those principal symbols $\partial_1^{(i_1)} \cdots \partial_n^{(i_n)}$ such that $i_j < p^r$ for all $j$. Hence $D^r$ is a locally free $\OO_{X^{(r)}}$ module of rank $p^{2r \dim X}$.
	Now the map $\OO_X \otimes_{\OO_{X^{(r)}}} D^r \to \End_{\OO_X}(D^r)$ by left and right multiplication may be checked to be an isomorphism by passing to associated graded.
\end{proof}

Hence $D^r$ defines an Azumaya algebra on $X^{(r)}$ which contains $\OO_X$ as a maximal commutative subalgebra.
As $k$ is perfect, we may identify $X^{(r)}$ with $X$ as a scheme; then $D^r$ is an Azumaya algebra with a canonical splitting of its pullback by the $r$th power of the Frobenius, and thus defines a $\mu_{p^r}$-gerbe on $X_{fppf}$. 
We now explicitly construct this gerbe using differential geometry.

\begin{lemma}[\cite{illusie79},(5.1.3)]\label{lemma: fppf-to-étale}
	If $X \to \Spec k$ is a scheme over a perfect field $k$ and $\varepsilon: X_{fppf} \to X_{\acute{e}t}$ is the canonical projection,
	then 
	\[R\varepsilon_\ast \mu_{p^r} \cong \OO_X^\times/(\OO_X^\times)^{p^r}[-1].\]
\end{lemma}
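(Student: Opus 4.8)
The statement to prove is Lemma \ref{lemma: fppf-to-étale}: for $X$ over a perfect field $k$ of characteristic $p$, if $\varepsilon: X_{fppf} \to X_{\acute{e}t}$ is the canonical morphism of sites, then $R\varepsilon_\ast \mu_{p^r} \cong \OO_X^\times / (\OO_X^\times)^{p^r}[-1]$ (placed in cohomological degree $1$). The plan is to exploit the Kummer sequence together with the fact that the Frobenius-$p^r$ map on $\mathbb G_m$ is an fppf cover but not an étale cover in characteristic $p$. Concretely, on $X_{fppf}$ there is a short exact sequence
\[ 0 \to \mu_{p^r} \to \mathbb G_m \xrightarrow{x \mapsto x^{p^r}} \mathbb G_m \to 0,\]
since raising to the $p^r$ power is faithfully flat (it is finite locally free of degree $p^r$). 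Applying $R\varepsilon_\ast$ gives a distinguished triangle $R\varepsilon_\ast\mu_{p^r} \to R\varepsilon_\ast\mathbb G_m \to R\varepsilon_\ast\mathbb G_m \to^{+1}$.

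The next step is to identify $R\varepsilon_\ast \mathbb G_m$. By Grothendieck's version of Hilbert 90 (cited as \cite[Theorem 11.7]{GGK68} earlier in the excerpt, and used in the proof of Proposition \ref{proposition: derived-inverse-limit}), the fppf and étale cohomology of $\mathbb G_m$ agree: the higher direct images $R^i\varepsilon_\ast\mathbb G_m$ vanish for $i > 0$, so $R\varepsilon_\ast\mathbb G_m \simeq \varepsilon_\ast\mathbb G_m = \mathbb G_m$ as a complex concentrated in degree $0$ on $X_{\acute{e}t}$. Feeding this into the triangle above, $R\varepsilon_\ast\mu_{p^r}$ is the fiber of the map $\mathbb G_m \xrightarrow{p^r} \mathbb G_m$ computed in the derived category of étale sheaves — but now $x \mapsto x^{p^r}$ is \emph{not} surjective as a map of étale sheaves (étale-locally on a characteristic $p$ scheme one cannot extract $p^r$-th roots). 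So the fiber is the two-term complex $[\OO_X^\times \xrightarrow{p^r} \OO_X^\times]$ in degrees $-1, 0$, whose cohomology sheaves are $\ker = \mu_{p^r}$ in degree $0$ — wait, that would reproduce $\mu_{p^r}$ in degree $0$, which is wrong. The fix: one must instead take the \emph{cofiber}/shift correctly. Applying $R\varepsilon_\ast$ to $0 \to \mu_{p^r}\to\mathbb G_m\to\mathbb G_m\to 0$ on $X_{fppf}$ yields the long exact sequence $0 \to \varepsilon_\ast\mu_{p^r} \to \mathbb G_m \xrightarrow{p^r}\mathbb G_m \to R^1\varepsilon_\ast\mu_{p^r}\to R^1\varepsilon_\ast\mathbb G_m = 0$; since $\mu_{p^r}\hookrightarrow\mathbb G_m$ realizes $\varepsilon_\ast\mu_{p^r}$ as $\ker(p^r\colon\mathbb G_m\to\mathbb G_m)$ which is the étale sheaf $\mu_{p^r}$ — but as an étale sheaf on a characteristic $p$ scheme, $\mu_{p^r}$ is infinitesimal, hence \emph{zero}. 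So $\varepsilon_\ast\mu_{p^r} = 0$, and $R^1\varepsilon_\ast\mu_{p^r} = \mathrm{coker}(p^r\colon \OO_X^\times \to \OO_X^\times) = \OO_X^\times/(\OO_X^\times)^{p^r}$, with all higher $R^i\varepsilon_\ast\mu_{p^r} = 0$ for $i \geq 2$ again by Hilbert 90. This is exactly the assertion $R\varepsilon_\ast\mu_{p^r}\cong \OO_X^\times/(\OO_X^\times)^{p^r}[-1]$.

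I would present it precisely in that order: (1) write the Kummer sequence on $X_{fppf}$ and note faithful flatness of $p^r$-th power; (2) recall Hilbert 90 to get $R\varepsilon_\ast\mathbb G_m = \mathbb G_m$ concentrated in degree $0$ on $X_{\acute{e}t}$; (3) run the long exact sequence of $R\varepsilon_\ast$ applied to the Kummer sequence, observing that $\varepsilon_\ast\mu_{p^r}$ is the étale sheaf of $p^r$-th roots of unity, which vanishes in characteristic $p$; (4) conclude $R^1\varepsilon_\ast\mu_{p^r} = \OO_X^\times/(\OO_X^\times)^{p^r}$ and $R^{\geq 2}\varepsilon_\ast\mu_{p^r} = 0$. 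The main obstacle, and the only genuinely characteristic-$p$ input beyond Hilbert 90, is the vanishing $\varepsilon_\ast\mu_{p^r} = 0$: one must justify that although $\mu_{p^r}$ is a nontrivial fppf sheaf, its pushforward to the étale site is zero because $\mu_{p^r}$ has no nontrivial sections over any étale $k$-algebra — equivalently, $X \mapsto \mathrm{Spec}\,k[x]/(x^{p^r}-1)$ has a single point and the scheme is purely infinitesimal, so it admits no étale-local sections other than $1$. This is immediate but is the conceptual heart; everything else is formal manipulation of the six-term exact sequence. (This lemma, with essentially this proof, is \cite[(5.1.3)]{illusie79}, which I would cite.)
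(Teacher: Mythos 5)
Your proof is correct and takes essentially the same route as the paper: the Kummer sequence on $X_{fppf}$, Hilbert 90 to identify $R\varepsilon_\ast\mathbb G_m \simeq \mathbb G_m$ on $X_{\acute{e}t}$, and then identification of the resulting fiber with $\OO_X^\times/(\OO_X^\times)^{p^r}[-1]$ --- the paper phrases this last step as comparing the triangle with the étale short exact sequence $0 \to \OO_X^\times \xrightarrow{p^r} \OO_X^\times \to \OO_X^\times/(\OO_X^\times)^{p^r} \to 0$, which encodes exactly your observations that $\varepsilon_\ast\mu_{p^r} = 0$ and $R^{\geq 2}\varepsilon_\ast\mu_{p^r} = 0$. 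Only the mid-proof digression about the degree placement of the fiber complex should be trimmed in a final write-up; your corrected conclusion is precisely the paper's argument.
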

\begin{proof}
	Consider the short exact sequence
	\[\begin{tikzcd}
	0 & {\mu_{p^r}} & {\mathbb G_m} & {\mathbb G_m} & 0
	\arrow[from=1-1, to=1-2]
	\arrow[from=1-2, to=1-3]
	\arrow[from=1-3, to=1-4]
	\arrow[from=1-4, to=1-5]
	\end{tikzcd}\]
	on $X_{fppf}$.
	By Hilbert's Theorem 90, $R\varepsilon_\ast \mathbb G_m = \mathbb G_m$, so comparing the triangle $R\varepsilon_\ast \mu_{p^r} \to \mathbb G_m \to \mathbb G_m \to^{+1}$ to the short exact sequence 
	\[\begin{tikzcd}
	0 & {\OO_X^\times} & {\OO_X^\times} & {\OO_X^\times/(\OO_X^\times)^{p^r}} & 0
	\arrow[from=1-1, to=1-2]
	\arrow["{p^r}", from=1-2, to=1-3]
	\arrow[from=1-3, to=1-4]
	\arrow[from=1-4, to=1-5]
	\end{tikzcd}\]
	on $X_{\acute{e}t}$ gives the result.
\end{proof}

Thus, a $\mu_{p^r}$-gerbe on $X_{fppf}$ is the same as an $\OO_X^{\times}/(\OO_X^\times)^{p^r}$-torsor on $X_{\acute{e}t}$.
If $\mathcal L$ is a torsor over $\OO_X^\times/(\OO_X^\times)^{p^r}$, then taking $\Diff_{\mathcal L}^r$ makes sense since $(\OO_X^\times)^{p^r}$ centralizes such differential operators.

\begin{proposition}\label{proposition: diff-ops-on-torsors}
	Consider the isomorphism 
	\[ H^2(X_{fppf},\mu_{p^r}) \cong H^1(X_{\acute{e}t},\OO_X^\times/(\OO_X^\times)^{p^r})\]
	induced by Lemma \ref{lemma: fppf-to-étale}
	If $[D^r]$ is mapped to the $\OO_X^\times/(\OO_X^\times)^{p^r}$-torsor $\mathcal L$,
	then $D^r \cong \Diff_{\mathcal L}^r$ as associative algebras over $\OO_X$.
\end{proposition}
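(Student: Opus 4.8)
The plan is to realize both $D^r$ and $\Diff_{\mathcal L}^r$ as forms of $\Diff_X^r$ for one and the same étale sheaf of automorphisms, and then to match the classifying torsors. The key computation is that the sheaf $\underline{\operatorname{Aut}}(\Diff_X^r)$ of automorphisms of $\Diff_X^r$ \emph{as a sheaf of $\OO_X$-algebras} --- i.e.\ automorphisms commuting with $\iota\colon \OO_X \to \Diff_X^r$ --- is canonically $\OO_X^\times/(\OO_X^\times)^{p^r}$, the class of a unit $g$ acting by conjugation. First I would establish this: since $k$ is perfect, the relative Frobenius $X \to X^{(r)}$ identifies $\OO_{X^{(r)}}$ with the subring $\OO_X^{p^r} \subseteq \OO_X$, so the centre $Z(\Diff_X^r) = \OO_{X^{(r)}}$ lies inside $\iota(\OO_X)$ and $Z(\Diff_X^r)^\times = (\OO_X^\times)^{p^r}$. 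An $\OO_X$-algebra automorphism of $\Diff_X^r$ therefore fixes the centre pointwise, so by Skolem--Noether for Azumaya algebras it is étale-locally conjugation by a unit of $\Diff_X^r$; requiring it to fix $\iota(\OO_X)$ forces this unit to centralise $\OO_X$, hence to lie in $\OO_X^\times$ by maximal commutativity (Proposition \ref{prop: centralizer-is-azumaya}); and two units give the same automorphism exactly when their ratio is central, i.e.\ a $p^r$-th power of a unit.

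With this in hand, $\Diff_{\mathcal L}^r$ is by construction the form of $\Diff_X^r$ attached to $\mathcal L$: on a cover $\{U_\alpha\}$ trivializing $\mathcal L$ with transition cocycle $(g_{\alpha\beta})$ valued in $\OO_X^\times/(\OO_X^\times)^{p^r}$, the algebra $\Diff_{\mathcal L}^r$ is obtained by regluing the $\Diff_{U_\alpha}^r$ along conjugation by $g_{\alpha\beta}$ --- legitimate precisely because $(\OO_X^\times)^{p^r}$ is central --- so that its class in $H^1(X_{\acute{e}t}, \OO_X^\times/(\OO_X^\times)^{p^r})$ is $[\mathcal L]$. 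On the other side, $D^r$ is étale-locally isomorphic to $\Diff_X^r$ compatibly with $\iota$ (for instance because a tdo is étale-locally isomorphic to $\Diff_X$, or directly: $D^r$, being Azumaya over $X^{(r)}$ with $\OO_X$ a maximal commutative subalgebra of the appropriate rank and $X \to X^{(r)}$ finite flat and Gorenstein, is Zariski-locally $\End_{\OO_{X^{(r)}}}$ of an invertible $\OO_X$-module). Hence $D^r$ is itself a form of $\Diff_X^r$ and determines a class $\xi \in H^1(X_{\acute{e}t}, \OO_X^\times/(\OO_X^\times)^{p^r})$ by its transition cocycle, with $\Diff^r_{\mathcal M} \cong D^r$ for the torsor $\mathcal M$ representing $\xi$, again by the regluing argument.

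It remains to identify $\xi$ with $[\mathcal L]$, i.e.\ with the image of $[D^r] \in H^2(X_{fppf},\mu_{p^r})$ under the isomorphism of Lemma \ref{lemma: fppf-to-étale}. I would argue this by comparing two functors: $\mathcal M \mapsto \Diff_{\mathcal M}^r$, from $\OO_X^\times/(\OO_X^\times)^{p^r}$-torsors to Azumaya algebras over $X^{(r)}$ split along $X \to X^{(r)}$ and equipped with $\OO_X$ as maximal commutative subalgebra; and the passage ``Azumaya algebra with such data $\mapsto$ its canonical $\mu_{p^r}$-gerbe $\mapsto$ torsor'' furnished by Proposition \ref{prop: centralizer-is-azumaya} followed by Lemma \ref{lemma: fppf-to-étale}. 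Both are functors between groupoids on which $H^1(X_{\acute{e}t},\OO_X^\times/(\OO_X^\times)^{p^r})$ acts simply transitively on isomorphism classes; both are equivariant for this action; and both send the distinguished object (the trivial torsor on one side, $\Diff_X^r = \End_{\OO_{X^{(r)}}}(\OO_X)$ with its tautological global splitting on the other) to the distinguished object. Any such functor is an equivalence carrying one cohomological dictionary to the other, whence $\xi = [\mathcal L]$ and $D^r \cong \Diff_{\mathcal L}^r$ as $\OO_X$-algebras. (Alternatively one can unwind Lemma \ref{lemma: fppf-to-étale} on \v{C}ech cocycles: a change of local splitting for $D^r$ over $X$ is recorded by a unit of $\OO_X$ well-defined modulo $(\OO_X^\times)^{p^r}$, which is literally the transition cocycle of $D^r$ as a form.)

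The main obstacle is this last identification. Modulo $\Pic(X)/p^r$ it is a tautology --- both $\xi$ and $[\mathcal L]$ have image $[D^r]$ in $\Br(X)[p^r]$ under the respective boundary maps --- but upgrading this to an equality on the nose requires keeping track of how the canonical Frobenius splitting pins down the Kummer-theoretic lift, which is exactly the content of the explicit form of Lemma \ref{lemma: fppf-to-étale}. A secondary point requiring care is the étale-local triviality of $D^r$ invoked above; this is where the Gorenstein property of the relative Frobenius (or, equivalently, the classification of tdo's) enters. Since the group $\OO_X^\times/(\OO_X^\times)^{p^r}$ is abelian, the classification of forms by $H^1$ and the comparison of the two cohomology classes present no further difficulty.
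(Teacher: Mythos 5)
Your overall skeleton is close to the paper's: both arguments come down to showing that $D^r$ is, étale-locally on $X$, isomorphic to $\Diff^r_{\mathcal L_\alpha}$ for line bundles $\mathcal L_\alpha$ compatibly with $\iota\colon\OO_X\to D^r$, and then recording the gluing data as a class in $H^1(X_{\acute{e}t},\OO_X^\times/(\OO_X^\times)^{p^r})$. Your computation of the automorphism sheaf $\underline{\mathrm{Aut}}_{\OO_X\text{-alg}}(\Diff_X^r)\cong\OO_X^\times/(\OO_X^\times)^{p^r}$ (Skolem--Noether, centralizer of $\OO_X$ equals $\OO_X$, center equals $(\OO_X)^{p^r}$) is correct and is a clean way to package the cocycle manipulation that the paper does by hand, and your equivariance argument for matching $\xi$ with $[\mathcal L]$ is of the same level of explicitness as the paper's own treatment of that compatibility.

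The gap is in the step you treat as "secondary": the étale-local identification of $D^r$ with some $\Diff^r_{\mathcal L}$ compatibly with $\iota$, which is in fact the entire content of the proposition, and neither justification you offer works as stated. The direct claim that $D^r$ is \emph{Zariski-locally} $\End_{\OO_{X^{(r)}}}$ of an invertible $\OO_X$-module is false in general: $\End_{\OO_{X^{(r)}}}(Fr^r_\ast\mathcal L)$ is a split Azumaya algebra, while the Brauer class of $D^r$ can be nonzero (its image in $T_p\Br(X)$ need not vanish, e.g.\ for an ordinary K3), and for smooth $X$ the map $\Br(X)\to\Br(K(X))$ is injective, so a nontrivial class does not die on any Zariski cover; only étale-local splitting is available. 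The alternative "a tdo is étale-locally isomorphic to $\Diff_X$" is not usable here either: §4 is meant to produce the bijection independently of the stacky classification, and étale-local triviality of (each finite level of) a tdo is essentially what is being proven. What is actually needed, and what the paper proves, is the following: on an étale cover where the Azumaya algebra $D^r$ splits one has $D^r\cong\End_{\OO_{X^{(r)}}}(\mathcal E_\alpha)$; the map $\iota$ makes $\mathcal E_\alpha$ a faithful $Fr^r_\ast\OO_X$-module; and one must show $\mathcal E_\alpha\cong Fr^r_\ast\mathcal L_\alpha$ for a line bundle $\mathcal L_\alpha$. This is done fiberwise over a closed point: the fiber of $Fr^r$ has coordinate ring $k[t_1,\ldots,t_n]/(t_1^{p^r},\ldots,t_n^{p^r})$, faithfulness gives a section $e$ with $(t_1\cdots t_n)^{p^r-1}e\neq 0$, so $Fr^r_\ast\OO_X\cdot e$ is free of rank one, and a rank count shows it is all of $\mathcal E_\alpha$. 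This socle/rank argument (your "Gorenstein" remark) has to be carried out, not just named; once it is, your form-theoretic bookkeeping goes through and recovers the paper's proof.
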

\begin{proof}
	By Proposition \ref{prop: centralizer-is-azumaya}, the algebra $D^r$ is Azumaya and thus splits \'etale locally. 
	\'Etale covers of $X^{(r)}$ are of the form $\{U_\alpha^{(r)} \to X^{(r)}\}$ where $\{U_\alpha \to X\}$ is an étale cover.
	If $D^r$ splits on $U_\alpha^{(r)} \to X^{(r)}$, then 
	\[
		D^r|_{U_\alpha} \cong End_{\OO_{U_\alpha}^{(r)}}(\mathcal E_\alpha)
	\] 
	for some sheaf $\mathcal E$ on $X^{(r)}$. Using $\OO_X \to D^r$ gives $\mathcal E_\alpha$ the structure of a $Fr^r_\ast\OO_X$-module where $Fr^r: X \to X^{(r)}$ is the $r$-fold relative Frobenius. We claim that 
	\[ 
		\mathcal E_\alpha = Fr^r_\ast \mathcal L_\alpha
	\] 
	for some line bundle $\mathcal L_\alpha$.
	It suffices to check this locally on $X^{(r)}$, so 
	we may base change to the algebraic closure of $k$ and consider a closed point $p$ with local coordinates $t_1,\ldots, t_n$.
	We now replace $X$ with $(Fr^r)^{-1}(x)$.
	If $t_1,\ldots, t_n$ are local coordinates at $p$, then the coordinate ring of $(Fr^r)^{-1}(p)$ is $k[t_1,\ldots, t_n]/(t_1^{p^r},\ldots, t_n^{p^r})$. 
	The sheaf $\mathcal E$ is a faithful $D^r$-module and thus a faithful $Fr^r_*\OO_X$-module. 
	Hence $(t_1\cdots t_n)^{p^r-1}\mathcal E \neq 0$.
	If $e$ is a section of $\mathcal E$ such that $(t_1\cdots t_n)^{p^r-1}e \neq 0$, then since $(t_1\cdots t_n)^{p^r-1}$ generates the minimal nonzero ideal of $k[t_1,\ldots,t_n]/(t_1^{p^r},\ldots, t_n^{p^r})$, we see $Fr^r_*\OO_X\cdot e$ is a rank 1 free module over $Fr^r_*\OO_X$. But $Fr^r_*\OO_X\cdot e$ has the same rank as $\mathcal E$ as an $\OO_{X^{(r)}}$-module, so $\mathcal E = Fr^r_*\OO_X \cdot e$ is locally free of rank 1 over $Fr^r_*\OO_X$.
	
	We have shown that on a splitting cover $U_\alpha^{(r)} \to X^{(r)}$, 
	\[D^r|_{U_\alpha} \cong \End_{\OO_{X^{(r)}}}(Fr^r_\ast \mathcal L_\alpha) \cong \Diff_{\mathcal L_\alpha}^r.\] On overlaps $U_{\alpha\beta}$ an isomorphism $\Diff_{\mathcal L_\alpha}^r|_{U_{\alpha\beta}} \cong \Diff_{\mathcal L_\beta}^r|_{U_{\alpha\beta}}$ implies that $\mathcal L_\alpha$ and $\mathcal L_\beta$ differ by a $p^r$th power on $U_{\alpha\beta}$; hence $\mathcal L_\alpha / (\OO_X^\times)^{(p^r)}$ glue to a $\OO_X^\times/(\OO_X^\times)^{p^r}$-torsor on $X$. Conversely,
	given a torsor over $\OO_X^\times/(\OO_X^\times)^{p^r}$, it may be locally lifted to line bundles $\mathcal L_\alpha$, and the algebras $\Diff_{\mathcal L_\alpha}^r$ descend to an Azumaya algebra $D^r$.
\end{proof}

It is known that $H^2(X,\mathbb Z_p(1)) \cong \varprojlim H^2(X_{fppf},\mu_{p^r})$, as the system\\ $\{H^\ast(X_{fppf},\mu_{p^r})\}_{r \geq 1}$ satisfies the Mittag-Leffler condition \cite[p. 627]{illusie79}.

\begin{corollary}
	The bijections
	\[ \varprojlim\limits_r H^1(X_{\acute{e}t},\OO_X^\times/(\OO_X^\times)^{p^r})) \leftrightarrow H^2(X,\mathbb Z_p(1)) \leftrightarrow \{\text{tdo's on }X/k\}/\cong  \]
	send a system $(\mathcal L_r)_{r \geq 1}$ of compatible $\OO_X^\times/(\OO_X^\times)^{p^r}$-torsors to 
	\[ D = \cup_r \Diff_{\mathcal L_r}^r.\]
\end{corollary}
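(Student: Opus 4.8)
The plan is to read off the bijection directly from the description of $D^r$ as an Azumaya algebra, independently of the Gaitsgory--Rozenblyum approach.

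\textbf{From a tdo to a system of torsors.} Given a tdo $D$ on $X/k$, Proposition \ref{prop: centralizer-is-azumaya} realizes the centralizer $D^r$ of $\OO_{X^{(r)}}$ as an Azumaya algebra on $X^{(r)}$, which we identify with $X$ since $k$ is perfect, together with a canonical splitting of its pullback along $Fr^r \colon X \to X^{(r)}$; this is the datum of a $\mu_{p^r}$-gerbe on $X_{fppf}$, hence by Lemma \ref{lemma: fppf-to-étale} of a class in $H^1(X_{\acute{e}t},\OO_X^\times/(\OO_X^\times)^{p^r})$ represented by a torsor $\mathcal L_r$, and Proposition \ref{proposition: diff-ops-on-torsors} gives $D^r\cong \Diff_{\mathcal L_r}^r$ as $\OO_X$-algebras. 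The first thing to check is that the $\mathcal L_r$ form a compatible system for the maps defining $\varprojlim_r H^1(X_{\acute{e}t},\OO_X^\times/(\OO_X^\times)^{p^r})$: the transition map of the tower $\{H^2(X_{fppf},\mu_{p^r})\}_r$ is induced by the $p$-th power map $\mu_{p^{r+1}}\to\mu_{p^r}$, which under Lemma \ref{lemma: fppf-to-étale} corresponds to the evident surjection $\OO_X^\times/(\OO_X^\times)^{p^{r+1}}\to\OO_X^\times/(\OO_X^\times)^{p^r}$, while on the side of algebras it corresponds to passing from $D^{r+1}$ to its centralizer $D^r$ of $\OO_{X^{(r)}}$. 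On an étale cover $U$ over which $\mathcal L_{r+1}$ is the image of a line bundle $L$, one has $\Diff_{\mathcal L_{r+1}}^{r+1}|_U\cong \Diff_L^{r+1}$, and the centralizer of $\OO_{X^{(r)}}$ therein is $\Diff_L^r$, whose gluing cocycle is the image of that of $\mathcal L_{r+1}$; this pins down the compatibility. Now Corollary \ref{corollary: cofinal-chains} gives $D=\cup_r D^r=\cup_r\Diff_{\mathcal L_r}^r$, which is the asserted formula, and since any isomorphism of tdo's preserves $\OO_X$ and the canonical filtration it restricts to compatible isomorphisms of the $D^r$, so the resulting class $(\mathcal L_r)\in\varprojlim_r H^1(X_{\acute{e}t},\OO_X^\times/(\OO_X^\times)^{p^r})$ depends only on the isomorphism class of $D$.

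\textbf{From a system of torsors to a tdo.} Conversely, given a compatible system $(\mathcal L_r)$, the same local computation gives $\Diff_{\mathcal L_r}^r\subseteq\Diff_{\mathcal L_{r+1}}^{r+1}$, so $D:=\cup_r\Diff_{\mathcal L_r}^r$ is a well-defined differential algebra on $X/k$ with $D_{\leq i}=\cup_r(\Diff_{\mathcal L_r}^r)_{\leq i}$. That $D$ satisfies Definition \ref{defn: tdo} --- that $\iota\colon\OO_X\to D_{\leq 0}$ is an isomorphism and the commutator pairing is perfect --- is a local condition which in degree $i$ involves only the pieces $\Diff_{\mathcal L_r}^r$ with $p^r>i$, so on a cover trivializing such an $\mathcal L_r$ it reduces to the known case $D=\Diff_L$ of a line bundle. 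By construction the centralizer of $\OO_{X^{(r)}}$ in this $D$ is $\Diff_{\mathcal L_r}^r$, whose associated torsor is $\mathcal L_r$ by Proposition \ref{proposition: diff-ops-on-torsors}; hence the two assignments are mutually inverse bijections between isomorphism classes of tdo's on $X/k$ and $\varprojlim_r H^1(X_{\acute{e}t},\OO_X^\times/(\OO_X^\times)^{p^r})$. Composing with the isomorphism $\varprojlim_r H^1(X_{\acute{e}t},\OO_X^\times/(\OO_X^\times)^{p^r})\cong\varprojlim_r H^2(X_{fppf},\mu_{p^r})\cong H^2(X,\mathbb Z_p(1))$ recalled above --- Lemma \ref{lemma: fppf-to-étale} degreewise, then the Mittag-Leffler statement --- yields the second bijection, and it restricts to $\mathcal L\mapsto\Diff_{\mathcal L}$ on $\Pic(X)^\wedge_p$, in agreement with Theorem \ref{theorem: main}.

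I expect the main obstacle to be the compatibility of the tower: one must check that the three bookkeeping devices for the same filtered Azumaya structure --- the $p$-th power maps $\mu_{p^{r+1}}\to\mu_{p^r}$ on gerbes, the quotient maps $\OO_X^\times/(\OO_X^\times)^{p^{r+1}}\to\OO_X^\times/(\OO_X^\times)^{p^r}$ on torsors, and the inclusions $D^r\subseteq D^{r+1}$ of centralizers --- are intertwined by the identifications of Lemma \ref{lemma: fppf-to-étale} and Proposition \ref{proposition: diff-ops-on-torsors}, so that the inverse limit of torsors really does correspond to the filtered union of Azumaya algebras. The secondary, more routine point is verifying that $\cup_r\Diff_{\mathcal L_r}^r$ is genuinely a tdo and not merely a differential algebra, which as indicated reduces degree by degree to the case of a line bundle.
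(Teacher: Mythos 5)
Your argument is correct and follows the same route the paper intends: the paper states this corollary without a separate proof, as the assembly of Proposition \ref{prop: centralizer-is-azumaya}, Proposition \ref{proposition: diff-ops-on-torsors}, the decomposition $D=\cup_r D^r$ from Corollary \ref{corollary: cofinal-chains}, Lemma \ref{lemma: fppf-to-étale}, and the Mittag-Leffler identification $H^2(X,\mathbb Z_p(1))\cong\varprojlim_r H^2(X_{fppf},\mu_{p^r})$. Your additional checks --- that the $p$-power transition maps on $\mu_{p^r}$ match the quotient maps on torsors and the centralizer inclusions $D^r\subseteq D^{r+1}$, and that $\cup_r\Diff_{\mathcal L_r}^r$ is genuinely a tdo --- are exactly the details the paper leaves implicit, and they are carried out correctly.
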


\subsection{Compatibility with twisted crystalline differential operators}\label{subsection: compatibility}

While $\Diff_X$ has divided powers of partial derivatives and is thus not finitely generated, there is a different algebra associated to $X$ known as the \emph{crystalline differential operators} $\mathcal D^{crys}_X$. The algebra of crystalline differential operators $\mathcal D^{crys}_X$ is the enveloping algebra of the Lie algebroid $T_{X/S}$ over $\OO_X$. It has a PBW filtration with associated graded $\gr \mathcal D^{crys}_X = \Sym_{\OO_X} T_{X/S}$. Unlike $\Diff_X$, the algebra $\mathcal D^{crys}_X$ has a large center isomorphic to $\OO_{T^*X^{(1)}}$. The center is parameterized by $s: \OO_{T^*X^{(1)}} = \Sym_{\OO_{X^{(1)}}} T_{X^(1)} \to \mathcal D^{crys}_X$ sending a function $f \in \OO_{X^{(1)}}$ to $Fr(f)$ and a vector field $\partial$ to $\partial^p - \partial^{\pop}$, where $\partial^{\pop}$ is the $p$th power of $\partial$ as a vector field \cite[§1.3]{bmr}. The inclusion $\OO_X \oplus T_{X/S} \to \Diff_X$ induces a map $\mathcal D^{crys}_X \to \Diff_X$
whose kernel is generated by the ideal of the zero section in $T^*X^{(1)}$ and whose image is $\Diff_X^1$ \cite[(2.2.5)]{bmr}.

Associated to any tdo $D$ on $X$, we will construct a map analogous to $\mathcal D^{crys}_X \to \Diff_X$.
Instead of crystalline differential operators, the domain will be more general filtered quantizations of $T^*X$ with a Frobenius splitting in the sense of \cite{bk08}. 
These have an explicit description in terms of restricted Lie algebroids, which we now recall.

\begin{definition}
	A \emph{restricted Lie algebroid} on $X/S$ is a Lie algebroid $\tau: \mathcal A \to T_{X/S}$ equipped with an operation $-^\pop: \mathcal A \to \mathcal A$ such that 
	\begin{enumerate}[(i)]
		\item $-^\pop$ makes the Lie algebra $\mathcal A$ into a restricted Lie algebra such that the anchor map $\tau$ is a morphism of restricted Lie algebras;
		\item for $f$ a section of $\OO_X$ and $x$ a section of $\mathcal A$,
		\[ (fx)^\pop = f^px^\pop + \tau(fx)^{p-1}(f)x. \]
	\end{enumerate}
\end{definition}

\begin{definition}[\cite{mundinger22}, Definition 4.2]
	A \emph{restricted Atiyah algebra} on $X$ is a restricted Lie algebroid of the form 
	\[ 0 \to \OO_X \to \mathcal A \to T_{X/S} \to 0\]
	such that for all sections $f$ of $\OO_X \subset \mathcal A$, we have $f^\pop = f^p$ and $[x,f] = \tau(x)f$ for $x \in \mathcal A$.
\end{definition}

\begin{proposition}[\cite{devalapurkar21}, Talk 11, Remark 23]
	The functor sending a restricted Atiyah algebra $\mathcal A$ to its enveloping algebra $U\mathcal A$ defines an equivalence between 
	restricted Atiyah algebras on $X$ and Frobenius-split filtered quantizations of $T^*X$.
\end{proposition}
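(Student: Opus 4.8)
**Proof proposal for the Proposition (restricted Atiyah algebras ↔ Frobenius-split filtered quantizations of $T^*X$).**

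The plan is to establish the equivalence by constructing functors in both directions and checking they are mutually inverse, working locally where possible and then gluing. First I would set up the target category precisely: a \emph{filtered quantization of $T^*X$} is a sheaf of filtered associative $\OO_X$-algebras $Q = \bigcup_i Q_{\leq i}$ with $Q_{\leq 0} = \OO_X$ and $\gr Q \cong \Sym_{\OO_X} T_{X/S}$ as Poisson algebras; a \emph{Frobenius splitting} is a central embedding $\OO_{T^*X^{(1)}} \hookrightarrow Q$ lifting the $p$th-power map on $\OO_{X^{(1)}} \subseteq \OO_X$ and compatible with the Poisson structure, exactly as in the description of $Z(\mathcal D^{crys}_X)$ recalled above. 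The point is that a filtered quantization is automatically \emph{almost commutative}: $[Q_{\leq 1}, Q_{\leq 1}] \subseteq Q_{\leq 1}$ and $[Q_{\leq 1}, \OO_X] \subseteq \OO_X$, so $\mathcal A := Q_{\leq 1}$ carries an $\OO_X$-Lie algebra structure and an anchor $\tau: \mathcal A \to \gr_1 Q = T_{X/S}$, fitting into $0 \to \OO_X \to \mathcal A \to T_{X/S} \to 0$. This is the classical Beilinson--Bernstein degree-one extension; the content here is the restricted structure.

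The key step is to produce the $p$-operation $-^\pop$ on $\mathcal A = Q_{\leq 1}$ from the Frobenius splitting. Given $x \in Q_{\leq 1}$ with symbol $\partial = \tau(x) \in T_{X/S}$, the element $x^p \in Q_{\leq p}$ has symbol $\partial^p$, but so does the central element $s(\partial^{(1)})$ coming from the splitting composed with the inclusion $T_{X^{(1)}} \hookrightarrow \OO_{T^*X^{(1)}}$; I would define $x^\pop := x^p - s(\widetilde{\partial})$ where $\widetilde\partial$ is the appropriate lift, and observe that this lands in $Q_{\leq 1}$ because the symbols cancel. One then checks: (i) $-^\pop$ is a restricted Lie algebra structure — this uses Jacobson's formula $(x+y)^{[p]} = x^{[p]} + y^{[p]} + \sum \text{(Lie words)}$ valid in any associative algebra of characteristic $p$, together with the fact that the correction term $s(\widetilde\partial)$ is central and additive modulo the Frobenius-semilinearity, matching the Lie-word corrections; (ii) the anchor $\tau$ is a map of restricted Lie algebras — immediate on symbols; (iii) the twisted-Leibniz identity $(fx)^\pop = f^p x^\pop + \tau(fx)^{p-1}(f)\,x$, which is the associative-algebra Hochschild identity $(fx)^p = f^p x^p + (\operatorname{ad}(fx))^{p-1}(fx)\cdot\text{\{stuff\}}$ — more precisely the classical computation that in $Q$, $(fx)^p \equiv f^p x^p + (fx\cdot\partial)^{p-1}(f)\,x \pmod{Q_{\leq 1}\text{-corrections that are absorbed by }s}$; and (iv) $f^\pop = f^p$ for $f \in \OO_X$, which holds because the splitting restricts to the $p$th power map on functions. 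Conversely, given a restricted Atiyah algebra $\mathcal A$, form the restricted enveloping algebra $U\mathcal A$ (enveloping algebra modulo the relations $x^{[p]}_{U\mathcal A} = x^\pop$, matching $p$th powers to the restriction); the restricted PBW theorem for Lie algebroids gives $\gr U\mathcal A \cong \Sym_{\OO_X} T_{X/S}$, and the image of $\Sym_{\OO_{X^{(1)}}} T_{X^{(1)}}$ under $f \mapsto f^p$, $\partial \mapsto \partial^p - \partial^\pop$ is central (the restricted structure makes $\partial^p - \partial^\pop$ central by the very definition, generalizing the $\mathcal D^{crys}$ computation in \cite{bmr}), giving the Frobenius splitting.

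The main obstacle I expect is \textbf{(a)} making the restricted PBW theorem for Lie \emph{algebroids} (not just Lie algebras) over a possibly non-reduced base $\OO_X$ rigorous — one needs that $U\mathcal A$ is a well-behaved filtered object with the expected associated graded, and that the center is exactly $\OO_{T^*X^{(1)}}$ and not larger; this can be checked étale-locally using coordinates $t_1,\dots,t_n$ with dual derivations, where $\mathcal A$ is free and everything reduces to an explicit computation, and then glued since the construction is functorial; and \textbf{(b)} verifying that the two functors are mutually inverse, i.e. that $Q_{\leq 1}$ with its derived $p$-operation has $U(Q_{\leq 1}) \xrightarrow{\sim} Q$ (surjectivity because $Q$ is generated in degree $\leq 1$ over $\OO_X$ once we know quantizations of $T^*X$ are generated in degrees $0$ and $1$ — true since $\Sym$ is generated in degree $1$ — and injectivity by comparing associated gradeds via PBW), and that $U\mathcal A$ recovers $\mathcal A$ in filtration degree $1$ with its original restriction. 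Since the result is attributed to \cite{devalapurkar21} and is essentially a packaging of \cite{bmr} and \cite{mundinger22}, I would present the local computation in coordinates as the heart of the argument and indicate the gluing and bookkeeping briefly.
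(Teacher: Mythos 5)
Your overall strategy---identify $\mathcal A$ with $Q_{\leq 1}$, and translate the Frobenius splitting $s$ into the $p$-operation via $x^\pop = x^p - s(\cdot)$, i.e.\ $s(\tau(x)) = x^p - x^\pop$---is exactly the mechanism the paper uses, so the forward direction is on track (modulo the small point that ``the symbols cancel'' only places $x^p - s(\cdot)$ in $Q_{\leq p-1}$; to get down to $Q_{\leq 1}$ you need an extra argument, e.g.\ $\mathrm{ad}(x^p) = \mathrm{ad}(x)^p$ shows the element commutes with $\OO_X$ into $\OO_X$, and a homogeneous element of $\Sym^j T_{X/S}$ with $2 \leq j \leq p-1$ that Poisson-commutes with all of $\OO_X$ must vanish, since the Poisson center of $\Sym T_{X/S}$ is generated in degrees divisible by $p$).

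The genuine error is in your inverse construction. You propose to send $\mathcal A$ to ``the restricted enveloping algebra $U\mathcal A$ (enveloping algebra modulo the relations $x^p = x^\pop$)''. That quotient is $u\mathcal A$, which is \emph{not} a filtered quantization of $T^*X$: by restricted PBW its associated graded is the truncated algebra $\Sym_{\OO_X} T_{X/S}/(\partial^p)$, and it is the rank-$p^{2\dim X}$ Azumaya algebra on $X^{(1)}$ that the paper studies separately (it plays the role of $\Diff^1_X$, not of $\mathcal D^{crys}_X$). Worse, after imposing $x^p = x^\pop$ the elements $\partial^p - \partial^\pop$ you then want to use to define the central splitting $s$ are zero by construction, so your sentence is internally inconsistent. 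The correct functor, and the one the paper uses, is the \emph{full} enveloping algebra $U\mathcal A$ of the Lie algebroid (with the section $1 \in \OO_X \subset \mathcal A$ identified with the unit): there $\gr U\mathcal A \cong \Sym_{\OO_X} T_{X/S}$, the elements $x^p - x^\pop$ are central and nonzero, and $f \mapsto f^p$, $\tau(x) \mapsto x^p - x^\pop$ defines the Frobenius splitting; the restricted Atiyah axioms are precisely what make this map well defined ($\OO_X$-semilinear in $\tau(x)$, via the identity $(fx)^\pop = f^p x^\pop + \tau(fx)^{p-1}(f)x$) and a splitting in the sense of Bezrukavnikov--Kaledin. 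With that substitution your two functors are mutually inverse essentially as you describe, and the argument matches the paper's.
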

\begin{proof}
	Filtered quantizations of $T^*X$ are of the form $U\mathcal A$ where $\mathcal A$ is an Atiyah algebra. 
	The algebra $\mathcal A$ has a filtered Frobenius splitting in the sense of Bezrukavnikov-Kaledin \cite{bk08} if and only if there is $-^{\pop}: \mathcal A \to \mathcal A$ such that $s(\tau(x)) = x^p - x^\pop$ defines a Frobenius splitting $s: \Sym_{\OO_X} T_{X/S} \to U\mathcal A$.
	The axioms of a Frobenius splitting then hold if and only if $-^\pop$ makes $\mathcal A$ into a restricted Atiyah algebra. 
\end{proof}

An isomorphism of restricted Atiyah algebras is an isomorphism of Lie algebroids respecting the $p$th power map $x \mapsto x^\pop$. Classification of restricted Atiyah algebras up to isomorphism is as follows: there is the exact sequence 
\begin{equation}\label{eq: milne-sequence}
	\begin{tikzcd}
		0 & {(\OO_X^\times)^p} & {\OO_X^\times} & {\Omega^1_{X/S,cl}} & {\Omega^1_{X/S}} & 0
		\arrow[from=1-1, to=1-2]
		\arrow[from=1-2, to=1-3]
		\arrow["d\log", from=1-3, to=1-4]
		\arrow["{1 - C}", from=1-4, to=1-5]
		\arrow[from=1-5, to=1-6]
	\end{tikzcd}
\end{equation}
of sheaves on $X_{\acute{e}t}$,
where $\Omega^1_{X/S,cl}$ is the étale sheaf of closed 1-forms and $C$ is the Cartier operator \cite[Proposition 4.14]{milne-etale}. It was proved in \cite[Theorem 4.5]{mundinger22} that restricted Atiyah algebras are classified by 
\[ H^1(X, (\begin{tikzcd}
	{\Omega^1_{X/S,cl}} & {\Omega^1_{X/S}}
	\arrow["{id - C}", from=1-1, to=1-2]
\end{tikzcd})).\]
In light of \eqref{eq: milne-sequence} and Lemma \ref{lemma: fppf-to-étale}, we have 
\begin{equation}\label{eq: milne sequence consequence}
	(\begin{tikzcd}
	{\Omega^1_{X,cl}} & {\Omega^1_X}
	\arrow["{id - C}", from=1-1, to=1-2]
\end{tikzcd}) \simeq \OO_X^\times/(\OO_X^\times)^p \simeq R\varepsilon_\ast \mu_p [1],
\end{equation}
on $X_{\acute{e}t}$, where $\varepsilon: X_{fppf} \to X_{\acute{e}t}$ is the canonical projection.
Hence isomorphism classes of restricted Atiyah algebras on $X \to \Spec k$ are in bijection with $H^2(X_{fppf},\mu_p)$. This was also known by D.\ Bragg independently at the time of publication of \cite{mundinger22}.

Additionally associated to a restricted Atiyah algebra $\mathcal A$ is its \emph{restricted enveloping algebra} $u\mathcal A = U\mathcal A / (x^p - x^\pop \mid x \in \mathcal A)$.

\begin{lemma}
	Suppose that $X \to \Spec k$ and $\mathcal A$ is a restricted Atiyah algebra. Then:
	\begin{enumerate}[(i)]
		\item $u\mathcal A$ is an Azumaya algebra on $X^{(1)}$ which contains $\OO_X$ as a maximal commutative subalgebra;
		\item the class of the associated $\mu_p$-gerbe $[u\mathcal A] \in H^2(X_{fppf},\mu_p)$ agrees with the class of $[\mathcal A]$.
	\end{enumerate}
\end{lemma}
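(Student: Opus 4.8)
The plan is to deduce both parts from the structural fact that \emph{every restricted Atiyah algebra on $X$ is étale-locally isomorphic to the ``crystalline'' one} $\mathcal A^{crys}$, by which I mean the restricted Atiyah algebra with $U\mathcal A^{crys} = \mathcal D^{crys}_X$ (equivalently, the split Atiyah algebra $\OO_X \oplus T_{X/S}$ with its canonical restricted structure). This is within reach of what has been assembled: by \cite[Theorem 4.5]{mundinger22} restricted Atiyah algebras are classified by $H^1$ of the complex $(\Omega^1_{X/S,cl} \xrightarrow{1-C} \Omega^1_{X/S})$, and by \eqref{eq: milne sequence consequence} this complex is quasi-isomorphic, as a complex of sheaves on $X_{\acute{e}t}$, to $\OO_X^\times/(\OO_X^\times)^p$ placed in degree $0$; so restricted Atiyah algebras are classified by $H^1(X_{\acute{e}t}, \OO_X^\times/(\OO_X^\times)^p)$, the group of $\OO_X^\times/(\OO_X^\times)^p$-torsors. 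Combined with the computation $\operatorname{Aut}(\mathcal A^{crys}) \cong \OO_X^\times/(\OO_X^\times)^p$ — an automorphism fixing $\OO_X$ and inducing the identity on $T_{X/S}$ is conjugation by a unit, and is trivial precisely when that unit is a $p$-th power, these acting centrally on $\mathcal D^{crys}_X$ — this exhibits the stack of restricted Atiyah algebras on $X_{\acute{e}t}$ as a gerbe banded by $\OO_X^\times/(\OO_X^\times)^p$ with global section $\mathcal A^{crys}$, whence the claimed étale-local triviality.

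For (i): the assignment $\mathcal A \mapsto u\mathcal A$ is functorial and commutes with étale localization, so to see $u\mathcal A$ is Azumaya it suffices to identify $u\mathcal A^{crys}$. By \cite[(2.2.5)]{bmr}, $u\mathcal A^{crys} = \mathcal D^{crys}_X/(\partial^p - \partial^{\pop} \mid \partial \in T_{X/S}) = \Diff^1_X = \End_{\OO_{X^{(1)}}}(Fr_\ast\OO_X)$, a matrix algebra over $\OO_{X^{(1)}}$. Hence $u\mathcal A$ is étale-locally a matrix algebra over $\OO_{X^{(1)}}$, so it is Azumaya over $X^{(1)}$. The canonical map $\OO_X \to U\mathcal A \to u\mathcal A$ is injective with commutative image, since étale-locally it is identified with the inclusion $\OO_X \hookrightarrow \Diff^1_X$ of the multiplication operators, a maximal commutative subalgebra; and because the centraliser of $\OO_X$ in $u\mathcal A$ is a subsheaf agreeing with $\OO_X$ étale-locally, $\OO_X$ is maximal commutative in $u\mathcal A$. (In particular $u\mathcal A$ splits along $Fr \colon X \to X^{(1)}$ as in Proposition \ref{prop: centralizer-is-azumaya}, so by Lemma \ref{lemma: fppf-to-étale} it defines a $\mu_p$-gerbe on $X_{fppf}$ and the class $[u\mathcal A]$ of (ii) is defined.)

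For (ii): just as in the treatment of tdo's above (Lemma \ref{lemma: fppf-to-étale} and Proposition \ref{proposition: diff-ops-on-torsors}), the Azumaya algebras on $X^{(1)}$ that split along $X \to X^{(1)}$ and carry $\OO_X$ as a maximal commutative subalgebra of the appropriate rank form a gerbe banded by $\OO_X^\times/(\OO_X^\times)^p$ with global section $\Diff^1_X$, and $\mathcal A \mapsto u\mathcal A$ is a morphism of gerbes over $X_{\acute{e}t}$ taking $\mathcal A^{crys}$ to $\Diff^1_X$. It remains to check this morphism is the identity on bands: the automorphism of $\mathcal A^{crys}$ attached to $g \in \OO_X^\times/(\OO_X^\times)^p$ is conjugation by a lift $\tilde g \in \OO_X^\times$, and $u$ carries it to conjugation by $\tilde g$ on $\Diff^1_X = \End_{\OO_{X^{(1)}}}(Fr_\ast\OO_X)$, i.e.\ to conjugation by the automorphism ``multiplication by $\tilde g$'' of the splitting module $Fr_\ast\OO_X$, which is exactly the automorphism of $\Diff^1_X$ attached to $g$. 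A morphism of neutral gerbes preserving the neutral section and inducing the identity on bands induces the identity on $H^1$ of the band, so $[u\mathcal A] = [\mathcal A]$ in $H^1(X_{\acute{e}t}, \OO_X^\times/(\OO_X^\times)^p) \cong H^2(X_{fppf}, \mu_p)$. Concretely: over an étale cover $\{U_i\}$ with isomorphisms $\phi_i \colon \mathcal A^{crys}|_{U_i} \xrightarrow{\sim} \mathcal A|_{U_i}$, the cocycle $(\phi_i^{-1}\phi_j)$ represents $[\mathcal A]$, while the $u(\phi_i)$ exhibit $u\mathcal A|_{U_i} \cong \Diff^1_{U_i}$ with transition maps $\operatorname{Ad}(\tilde g_{ij})$, so the local splitting modules $Fr_\ast\OO_X|_{U_i}$ glue to a torsor classified by the same cocycle.

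The step carrying the real weight is the first one — that restricted Atiyah algebras are étale-locally trivial, so that the stacks in play are honest gerbes; granting this, the remainder is bookkeeping with the identification $H^2(X_{fppf},\mu_p) \cong H^1(X_{\acute{e}t},\OO_X^\times/(\OO_X^\times)^p)$ of Lemma \ref{lemma: fppf-to-étale} and with the automorphism sheaves.
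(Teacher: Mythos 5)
Your argument is correct in substance, and it reaches the lemma by a route organized differently from the paper's, though the two share the same inputs: the classification of restricted Atiyah algebras via \cite[Theorem 4.5]{mundinger22} together with \eqref{eq: milne sequence consequence}, and the identification $u\mathcal A^{crys}=\mathcal D^{crys}_X/(\partial^p-\partial^{\pop})\cong\Diff^1_X$ from \cite{bmr}. The paper proves (i) abstractly: $U\mathcal A$ is a Frobenius-split filtered quantization of $T^*X$, hence Azumaya over its center $\OO_{T^*X^{(1)}}$, and one restricts modulo the ideal of the zero section; and it proves (ii) by writing $\mathcal A\cong\Diff_{\mathcal L,\leq 1}$ globally for an $\OO_X^\times/(\OO_X^\times)^p$-torsor $\mathcal L$ representing the class, then showing $u\Diff_{\mathcal L,\leq 1}\to\Diff^1_{\mathcal L}$ is an isomorphism (checked étale-locally after trivializing $\mathcal L$), so that $[u\mathcal A]=[\mathcal L]$ by the mechanism of Proposition \ref{proposition: diff-ops-on-torsors}. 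Your route --- first étale-local triviality of restricted Atiyah algebras, then (i) by local identification of $u\mathcal A$ with $\Diff^1_X$, then (ii) by viewing both sides as neutral gerbes banded by $\OO_X^\times/(\OO_X^\times)^p$ and checking that $u$ sends conjugation by $\tilde g$ to conjugation by $\tilde g$, hence is the identity on bands --- buys a more elementary proof of (i) (no appeal to the Azumaya property of $U\mathcal A$ over $T^*X^{(1)}$) and makes the cocycle bookkeeping for (ii) explicit, at the cost of redoing by hand what the paper gets by quoting Proposition \ref{proposition: diff-ops-on-torsors} and the quantization formalism.

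One point to tighten: your deduction of étale-local triviality is circular as phrased, since you invoke the gerbe property to get local triviality, while local connectivity is part of being a gerbe; knowing only that global isomorphism classes biject with $H^1(X_{\acute{e}t},\OO_X^\times/(\OO_X^\times)^p)$ and that $\mathrm{Aut}(\mathcal A^{crys})\cong\OO_X^\times/(\OO_X^\times)^p$ does not formally yield it. The honest justification is that the classification applies naturally over every étale $U\to X$ --- equivalently, as in the paper, $\mathcal A\cong\Diff_{\mathcal L,\leq 1}$ with $\mathcal L$ a torsor, and torsors are étale-locally trivial --- which gives $\mathcal A|_U\cong\mathcal A^{crys}|_U$ on a cover. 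With that repaired, your band/cocycle comparison goes through, subject to the same normalization compatibilities (matching $[\mathcal A]$ and $[u\mathcal A]$ with torsor classes under Lemma \ref{lemma: fppf-to-étale}) on which the paper's own proof also relies; this is a presentational fix, not a missing idea.
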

\begin{proof}
	Since $U\mathcal A$ is a Frobenius-split quantization of $T^*X$, the algebra $U\mathcal A$ is Azumaya over its center $\OO_{T^*X^{(1)}}$.
	Reducing modulo the ideal of the zero section of $T^*X^{(1)} \to X^{(1)}$ gives (i).

	Now since $u\mathcal A$ is an Azumaya algebra on $X^{(1)}$ with maximal commutative subalgebra $\OO_X$, it defines a $\mu_p$-gerbe $[u\mathcal A] \in H^2(X_{fppf},\mu_p)$.
	Now let $\mathcal L$ be an $\OO_X^\times / (\OO_X^\times)^p$-torsor where $[\mathcal L]$ maps to $[u\mathcal A]$ under the equivalence $\OO_X^\times/(\OO_X^\times)^p \simeq R\varepsilon_*\mu_p[1]$.
	\[ d\log[\mathcal L] \in H^1(X, \begin{tikzcd}
		{\Omega^1_{X,cl}} & {\Omega^1_X}
		\arrow["{id - C}", from=1-1, to=1-2]
	\end{tikzcd}),\] 
	corresponds to the restricted Atiyah algebra $\Diff_{\mathcal L,\leq 1}$. Because of the isomorphisms \eqref{eq: milne sequence consequence}, $\mathcal A$ is isomorphic to some $\Diff_{\mathcal L, \leq 1}$, so it suffices to show the claim 
	\[ u\Diff_{\mathcal L, \leq 1} \to \Diff_{\mathcal L}^1\] 
	induced by $\Diff_{\mathcal L,\leq 1}\to \Diff_{\mathcal L}^1$ is an isomorphism. Such an isomorphism can be checked étale locally; after pulling back along an étale cover trivializing $\mathcal L$, the map is exactly $\mathcal D^{crys}_{X,\leq 1} \to \Diff_{X,\leq 1}$, which is an isomorphism since both are $\OO_X \oplus T_X$. Thus (ii) follows.
\end{proof}

\begin{proposition}
	Let $D$ be a tdo on $X \to \Spec k$. Then $D_{\leq 1}$ is a restricted Atiyah algebra via the identifications $\gr^0 D \cong \OO_X$ and $\gr^1 D \cong T_{X/S}$. Further, the restricted enveloping algebra $u D_{\leq 1}$ is isomorphic to $D^1$.
\end{proposition}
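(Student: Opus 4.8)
The plan is to equip $D_{\leq 1}$ with the tautological $p$-operation $x^\pop := x^p$, the $p$th power computed inside the associative algebra $D$, and then to identify the enveloping algebra $UD_{\leq 1}$ with a subalgebra of $D$ by a filtered argument. First I would check that $x^p \in D_{\leq 1}$ whenever $x \in D_{\leq 1}$: writing $L_x, R_x$ for left and right multiplication by $x$ on $D$, these commute, so in characteristic $p$ one has $\operatorname{ad}(x^p) = L_{x^p} - R_{x^p} = (L_x - R_x)^p = (\operatorname{ad} x)^p$ as operators on $D$; evaluating on $f \in \OO_X \subseteq D$ and using $\operatorname{ad}(x)|_{\OO_X} = \tau(x)$ gives $[x^p, f] = \tau(x)^\pop(f) \in \OO_X = D_{\leq 0}$ for every $f$, whence $x^p \in D_{\leq 1}$ and $\tau(x^p) = \tau(x)^\pop$. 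Granting this, the restricted Lie algebra axioms for $(D_{\leq 1}, [-,-], x \mapsto x^p)$ hold because the $p$th power makes any associative $\mathbb F_p$-algebra into a restricted Lie algebra, the same computation shows $\tau$ is a morphism of restricted Lie algebras, and $f^\pop = f^p$ and $[x,f] = \tau(x)(f)$ for $f \in \OO_X$ are immediate. The one substantive point is the Leibniz-type axiom $(fx)^\pop = f^p x^\pop + \tau(fx)^{p-1}(f)\,x$: this is Hochschild's classical $p$th-power formula, valid in any associative $\mathbb F_p$-algebra for an element normalizing a commutative subalgebra, and it may alternatively be verified étale-locally, where $D \cong \Diff_X$ and it reduces to the familiar identity $(f\partial)^p = f^p\partial^p + (f\partial)^{p-1}(f)\partial$.

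For the second assertion, the inclusion $D_{\leq 1} \hookrightarrow D$ satisfies the universal property of the enveloping algebra of an Atiyah algebra, hence induces a filtered homomorphism $UD_{\leq 1} \to D$ carrying the PBW filtration to the order filtration; since $x^\pop$ is by construction the $p$th power of $x$ in $D$, this map kills $x^p - x^\pop$ and thus factors as $uD_{\leq 1} \to D$. Its image is the subalgebra of $D$ generated by $D_{\leq 1}$, and I would identify this with $D^1$. On one hand $D_{\leq 1} \subseteq D^1$, since each $x \in D_{\leq 1}$ commutes with every $p$th power $g^p$ ($g \in \OO_X$): $[x, g^p] = \tau(x)(g^p) = p\,g^{p-1}\tau(x)(g) = 0$. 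On the other hand, passing to associated graded, the subalgebra of $\gr D = \Gamma_{\OO_X} T_{X/S}$ generated by $\gr^0 = \OO_X$ and $\gr^1 = T_{X/S}$ is spanned locally by the monomials $\partial_1^{(i_1)} \cdots \partial_n^{(i_n)}$ with $i_j < p$ (because $\partial^{(i)} = (i!)^{-1}\partial^i$ for $i < p$), which by Proposition \ref{prop: centralizer-is-azumaya} is exactly $\gr D^1$. Hence the image equals $D^1$, giving a filtered surjection $uD_{\leq 1} \twoheadrightarrow D^1$.

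It remains to check injectivity, which I would do on associated graded. By the PBW theorem $\gr UD_{\leq 1} \cong \Sym_{\OO_X} T_{X/S}$; for $x \in T_{X/S}$ the leading term of $x^p - x^\pop$ is $x^p \in \Sym^p$, and since $(\sum f_i \partial_i)^p = \sum f_i^p \partial_i^p$ in the commutative ring $\Sym_{\OO_X} T_{X/S}$ these leading terms generate the ideal $(\partial_1^p, \ldots, \partial_n^p)$ locally; thus $\gr uD_{\leq 1}$ is a quotient of the locally free rank $p^n$ algebra $\Sym_{\OO_X} T_{X/S}/(\partial_j^p)$. The composite $\Sym_{\OO_X} T_{X/S}/(\partial_j^p) \twoheadrightarrow \gr uD_{\leq 1} \to \gr D^1$ sends $\partial^\alpha \mapsto (\prod_j \alpha_j!)\,\partial^{(\alpha)}$, hence carries a basis to a basis and is an isomorphism; it follows that both arrows are isomorphisms. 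Consequently $uD_{\leq 1} \to D^1$ is a filtered homomorphism inducing an isomorphism on associated graded, and as both sides carry exhaustive bounded filtrations it is an isomorphism.

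Beyond routine filtered bookkeeping the argument rests on two inputs: Hochschild's $p$th-power identity, needed to see that the intrinsic power $x \mapsto x^p$ of $D$ restricts to a map on $D_{\leq 1}$ obeying the restricted Atiyah-algebra Leibniz rule, and the description of $\gr D^1$ from Proposition \ref{prop: centralizer-is-azumaya}. I expect the verification of that Leibniz axiom to be the main obstacle.
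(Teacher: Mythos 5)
Your argument is correct and is essentially the paper's own proof, only spelled out in more detail: the paper likewise sets $x^\pop = x^p$, uses $\mathrm{ad}(x^p)=\mathrm{ad}(x)^p$ to see the relevant commutators land in $\OO_X$, factors $UD_{\leq 1}\to uD_{\leq 1}\to D$ with image in the centralizer of the $p$th powers of $\OO_X$, and concludes that $uD_{\leq 1}\to D^1$ is an isomorphism by passing to associated graded, leaving the Hochschild $p$th-power identity and the rank-$p^{n}$ comparison with $\Sym_{\OO_X}T_{X/S}/(\partial_j^p)$ implicit. One small caution about your parenthetical alternative check of the Leibniz axiom: a tdo need not satisfy $D\cong\Diff_X$ étale-locally (its class in $H^2(X,\mathbb Z_p(1))$ need not die on any single étale cover; only $D^1\cong\Diff^1_{\mathcal L}$ with $\mathcal L$ étale-locally trivial, via Proposition \ref{proposition: diff-ops-on-torsors}), but this does not affect your proof since the identity only involves elements of $D^1$ and your primary justification via Hochschild's formula is sound.
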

\begin{proof}
	Suppose $x$ is a section of $D_{\leq 1}$.
	Then $ad(x^p) = ad(x)^p$, so for $f_0,f_1$ sections of $\OO_X$,
	\[ [[x^p,f_0],f_1] = [ad(x)^p(f_0),f_1] = 0\]
	since $[Q,f_0] \in \OO_X$. It follows that $D_{\leq 1}$ is a restricted Atiyah algebra with $p$-operation $x^\pop = x^p$.
	The natural map $UD_{\leq 1} \to D$ thus factors through $uD_{\leq 1}$. As $uD_{\leq 1}$ is generated in degree 1, the image commutes with $\OO_X^p$, and passing to associated graded shows the map $uD_{\leq 1} \to D^1$ is an isomorphism.
\end{proof}

We have constructed the desired map $\Dtilde^{crys} = UD_{\leq 1} \to D$ whose kernel is $(x^p - x^\pop \mid x \in D_{\leq 1})$, whose image is $D^1$, and which agrees with $\gr(\mathcal D^{crys}_X \to \Diff_X)$ on associated graded. 

\appendix

\section{Derived completions}\label{appendix: completion}

Fix a stable $\infty$-category $\mathcal C$ with countable limits. We define the derived completion of an object of $\mathcal C$ at an endomorphism and give a universal property for it. Our treatment is a dual version of Arinkin-Gaitsgory's cocompletion in triangulated categories \cite[§3.1]{ag15}.

Given $\mathcal C$, the category of endomorphisms in $\mathcal C$ is the $\infty$-category 
\[ \mathcal C^{end} = \Fun(B\mathbb N, \mathcal C),\]
where $\mathbb N$ is the free monoid on one generator $1 \in \mathbb N$ and $B\mathbb N$ is the nerve of $\mathbb N$.
We will denote an object of $\mathcal C^{end}$ by $(A,a)$ where $A \in \mathcal C$ and $a \in \End(A)$ is the image of $1 \in \mathbb N$, although an object of $\mathcal C^{end}$ also involves coherent choices of powers of $a$.
We also have the full subcategory $\mathcal C^{aut} \to \mathcal C^{end}$ of those $(A,a)$ where $a$ is an equivalence.

\begin{definition}
	Let $(A,a) \in \mathcal C^{end}$.
	Then the \emph{derived colocalization} of $(A,a)$
	is the inverse limit 
	\begin{equation}\label{eq: derived-colocalization}
		T(A,a) = \varprojlim \left(
		\begin{tikzcd}
			\cdots & {(A,a)} & {(A,a)} & {(A,a)}
			\arrow["a", from=1-1, to=1-2]
			\arrow["a", from=1-2, to=1-3]
			\arrow["a", from=1-3, to=1-4]
		\end{tikzcd}
	\right)
	\end{equation}
	in $\mathcal C^{end}$.
\end{definition}
The endomorphism of $T(A,a)$ is induced by $a: (A,a) \to (A,a)$.

\begin{proposition}
	Derived colocalization is right adjoint to the inclusion $\mathcal C^{aut} \to \mathcal C^{end}$.
\end{proposition}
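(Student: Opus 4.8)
The plan is to verify the adjunction by exhibiting the counit $T(A,a) \to (A,a)$ and checking the universal property directly against objects of $\mathcal C^{aut}$. First I would observe that the tower in \eqref{eq: derived-colocalization} lives in $\mathcal C^{end}$, so the limit $T(A,a)$ exists by the hypothesis that $\mathcal C$ (hence $\mathcal C^{end} = \Fun(B\mathbb N, \mathcal C)$) has countable limits; the projection to the $0$th stage gives the candidate counit $\epsilon: T(A,a) \to (A,a)$, and the transition map $a$ on the tower induces, by functoriality of the limit, the endomorphism of $T(A,a)$ that makes $\epsilon$ a map in $\mathcal C^{end}$.

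Next I would show that $\epsilon$ is an equivalence when evaluated at $(A,a) \in \mathcal C^{aut}$ — more precisely that $T$ lands in $\mathcal C^{aut}$ and that the counit restricted to $\mathcal C^{aut}$ is an equivalence. When $a$ is already an equivalence, all transition maps $a: (A,a) \to (A,a)$ in the tower are equivalences, so the limit is computed by any single stage and $\epsilon$ is an equivalence; in particular the endomorphism on $T(A,a)$ is an equivalence, so $T(A,a) \in \mathcal C^{aut}$. This also shows that the composite $\mathcal C^{aut} \hookrightarrow \mathcal C^{end} \xrightarrow{T} \mathcal C^{end}$ is equivalent to the inclusion.

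Then I would establish the adjunction. For $(B,b) \in \mathcal C^{aut}$ and $(A,a) \in \mathcal C^{end}$, composing with $\epsilon$ gives a map
\[
\Map_{\mathcal C^{end}}((B,b), T(A,a)) \longrightarrow \Map_{\mathcal C^{end}}((B,b), (A,a)),
\]
and I claim it is an equivalence. Since mapping spectra send limits in the target to limits, the left-hand side is
\[
\Map_{\mathcal C^{end}}((B,b), T(A,a)) \simeq \varprojlim_n \Map_{\mathcal C^{end}}((B,b), (A,a)),
\]
where the transition maps are postcomposition with $a$. Because $b$ is an equivalence and maps in $\mathcal C^{end}$ intertwine $b$ with $a$, postcomposition with $a$ on $\Map_{\mathcal C^{end}}((B,b),(A,a))$ equals precomposition with $b$, hence is an equivalence; therefore this inverse limit along equivalences is computed by its $0$th term, which is exactly $\Map_{\mathcal C^{end}}((B,b),(A,a))$, and one checks the resulting identification is the map above. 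This equivalence is natural in both variables, so $T$ is right adjoint to the inclusion with counit $\epsilon$.

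The main obstacle I anticipate is purely bookkeeping in the $\infty$-categorical setting: making the identification "postcomposition with $a$ $=$ precomposition with $b$" on mapping spaces precise and coherent — i.e.\ that an object of $\mathcal C^{end}$ is not just a pair $(A,a)$ but a functor out of $B\mathbb N$ carrying all the coherence data, and that a map $(B,b)\to(A,a)$ really does witness $f\circ b \simeq a\circ f$ coherently. Once one grants the (dual of the) formalism in \cite[§3.1]{ag15}, this is routine, but it is the step where care is needed; everything else reduces to the elementary fact that a limit of a tower whose transition maps are equivalences is equivalent to any stage.
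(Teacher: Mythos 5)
Your proof is correct and takes essentially the same approach as the paper: both reduce the adjunction to showing that the projection $T(A,a)\to(A,a)$ induces an equivalence on mapping objects out of any $(B,b)\in\mathcal C^{aut}$, using that the limit commutes with $\Map((B,b),-)$, that the intertwining property identifies postcomposition with $a$ with the equivalence $b$, and that a limit of a tower of equivalences is computed by any stage. Your extra check that $T(A,a)$ lies in $\mathcal C^{aut}$ is a sensible addition the paper leaves implicit, but the core argument is identical.
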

\begin{proof}
	Let $p: T(A,a) \to (A,a)$ be the projection onto the first factor in the inverse limit \eqref{eq: derived-colocalization}. It suffices to show that if $(B,b) \in \mathcal C^{aut}$, then 
	\[ p_*: \Hom((B,b),T(A,a)) \to \Hom((B,b),(A,a))\]
	is an equivalence. 
	By definition of limit,
	\begin{equation}\label{eq: universal property of limit}
		\Hom((B,b),T(A,a)) \simeq \varprojlim_a \Hom((B,b),(A,a)),
	\end{equation}
	but the action of $a$ on $\Hom((B,b),(A,a))$ is equivalent to the action of $b$ since morphisms in $\mathcal C^{end}$ are intertwiners. 
	The morphism $b$ is an equivalence, so $a_*: \Hom((B,b),(A,a)) \to \Hom((B,b),(A,a))$ is also an equivalence.
	Now a filtered limit along equivalences gives an equivalence
	\[ \varprojlim_a \Hom((B,b),(A,a)) \to \Hom((B,b),(A,a)),\]
	by e.g.\ \cite{mayponto}[Proposition 2.2.9].
\end{proof}

Thus the inclusion $\mathcal C^{aut} \to \mathcal C^{end}$ is a colocalization, so that we have a short exact sequence of categories 
\[\begin{tikzcd}
	{\mathcal C^{aut}} & {\mathcal C^{end}} & {\mathcal C^{end, c}}
	\arrow[shift left, from=1-1, to=1-2]
	\arrow[shift left, from=1-2, to=1-1]
	\arrow[shift left, from=1-2, to=1-3]
	\arrow[shift left, from=1-3, to=1-2]
\end{tikzcd}\]
where $(\mathcal C^{end})^\perp = \mathcal C^{end,c}$ will be the \emph{complete objects} in $\mathcal C^{end}$:

\begin{definition}\label{definition: complete}
	Let $(A,a) \in \mathcal C^{end}$.
	The \emph{completion} of $(A,a)$ is 
	\[ A^\wedge_a = \cone(T(A,a) \to (A,a)).\]
	$(A,a)$ is \emph{complete} if $T(A,a)$ is contractible, that is, $(A,a) \to A^\wedge_a$ is an equivalence.
\end{definition}

By definition, the full subcategory  $\mathcal C^{end,c}$ on complete objects of $\mathcal C^{end}$ is the right orthogonal of $\mathcal C^{aut}$,
which immediately implies:

\begin{corollary}\label{corollary: completion-left-adjoint}
	The inclusion $\mathcal C^{end,c} \to \mathcal C^{end}$ is right adjoint to completion $(A,a) \mapsto A^\wedge_a$.
\end{corollary}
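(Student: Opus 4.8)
The plan is to deduce the adjunction formally from the colocalization $\mathcal C^{aut}\rightleftarrows\mathcal C^{end}$ established above, using only that $\mathcal C^{end}=\Fun(B\mathbb N,\mathcal C)$ is stable with countable limits (both inherited pointwise from $\mathcal C$). First I would reformulate completeness in orthogonality terms: for $(B,b)\in\mathcal C^{end}$, the object $T(B,b)$ is contractible if and only if $\Hom_{\mathcal C^{end}}((A',a'),(B,b))\simeq 0$ for every $(A',a')\in\mathcal C^{aut}$. Indeed, the colocalization adjunction identifies $\Hom_{\mathcal C^{end}}((A',a'),(B,b))$ with $\Hom_{\mathcal C^{aut}}((A',a'),T(B,b))$; the ``only if'' is then immediate, and the ``if'' follows by testing against $(A',a')=T(B,b)$, which forces $\mathrm{id}_{T(B,b)}$ to be null. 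Thus $\mathcal C^{end,c}$ is precisely the right orthogonal $(\mathcal C^{aut})^{\perp}$.

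Next I would check that completion $(A,a)\mapsto A^\wedge_a=\cone(T(A,a)\to(A,a))$ is a well-defined functor $\mathcal C^{end}\to\mathcal C^{end,c}$: functoriality is automatic, being the cofiber of the natural transformation $T\to\mathrm{id}$, so one only needs that $A^\wedge_a$ is complete. For that, apply the exact functor $T$ to the defining cofiber sequence, obtaining $TT(A,a)\to T(A,a)\to T(A^\wedge_a)$; by idempotency of the colocalization (the triangle identity, together with full faithfulness of $\mathcal C^{aut}\hookrightarrow\mathcal C^{end}$, makes $T$ of the counit $T(A,a)\to(A,a)$ an equivalence), the left-hand map is an equivalence, whence $T(A^\wedge_a)\simeq 0$, i.e.\ $A^\wedge_a\in\mathcal C^{end,c}$.

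Finally I would verify the adjunction itself through the completion maps $\eta_{(A,a)}\colon(A,a)\to A^\wedge_a$. Fix $(B,b)\in\mathcal C^{end,c}$ and apply $\Hom_{\mathcal C^{end}}(-,(B,b))$ to the cofiber sequence $T(A,a)\to(A,a)\to A^\wedge_a$; stability turns it into a fiber sequence
\[ \Hom(A^\wedge_a,(B,b))\longrightarrow\Hom((A,a),(B,b))\longrightarrow\Hom(T(A,a),(B,b)). \]
The rightmost term vanishes, since $T(A,a)\in\mathcal C^{aut}$ and $(B,b)\in(\mathcal C^{aut})^{\perp}$, so precomposition with $\eta_{(A,a)}$ is an equivalence $\Hom(A^\wedge_a,(B,b))\xrightarrow{\ \sim\ }\Hom((A,a),(B,b))$. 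As $\mathcal C^{end,c}\subseteq\mathcal C^{end}$ is full, this is exactly the natural equivalence exhibiting $(A,a)\mapsto A^\wedge_a$ as left adjoint to the inclusion, with unit $\eta$.

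I expect the only genuinely non-formal step to be the middle one — confirming that completion lands in $\mathcal C^{end,c}$, which rests on idempotency of $T$ and the exactness supplied by stability of $\mathcal C^{end}$. Granting that, the orthogonality reformulation collapses the adjunction identity into a single application of the long exact sequence of a cofiber sequence.
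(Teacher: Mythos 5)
Your proof is correct and follows essentially the same route as the paper, which simply observes that $\mathcal C^{end,c}$ is the right orthogonal of $\mathcal C^{aut}$ and invokes the standard colocalization formalism as immediate. Your write-up merely spells out the details the paper leaves implicit (completeness $=$ orthogonality, idempotency of $T$ forcing $T(A^\wedge_a)\simeq 0$, and the fiber-sequence argument giving the universal property of $(A,a)\to A^\wedge_a$), all of which are accurate.
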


Consider now the case when $\mathcal C = D(R)$ for a ring $R$.
On abelian groups, $R\lim$ over an $\mathbb N$-indexed diagram has cohomological dimension one \cite[Corollary 3.5.4]{weibel}.
Hence in $D(R)$ the cohomology of $A^\wedge_a$ may be expressed in terms of short exact sequences involving $H^*(A)$ and the action of $H^*(a)$.
If $M$ is a classical $\mathbb Z[t]$-module, recall the classical \emph{Tate module} $T_tM = \lim_r B[t^r]$, where the transition maps are given by multiplication by $t$.

\begin{lemma}
	\label{lemma: tate-module-sequence}
	Let $R$ be a ring and $(A,a) \in D(R)^{end}$.
	For all $i \in \mathbb Z$, there are short exact sequences 
	\[\begin{tikzcd}
		0 & {H^0(H^i(A)^\wedge_a)} & {H^i(A^\wedge_a)} & {T_aH^{i+1}(A)} & 0
		\arrow[from=1-1, to=1-2]
		\arrow[from=1-2, to=1-3]
		\arrow[from=1-3, to=1-4]
		\arrow[from=1-4, to=1-5]
	\end{tikzcd}\]
\end{lemma}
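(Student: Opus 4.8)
The plan is to compute the cohomology of $A^\wedge_a = \cone(T(A,a) \to (A,a))$ by first resolving the inverse limit $T(A,a)$ via its associated Milnor sequence, then feeding the result into the long exact sequence of the cone. First I would recall that $T(A,a) = R\varprojlim$ of the tower $\cdots \xrightarrow{a} A \xrightarrow{a} A$, so by the standard $R^1\varprojlim$ lemma (the $\varprojlim$-$\varprojlim^1$ short exact sequence, valid because $R\varprojlim$ over $\mathbb N$ has cohomological dimension one, as cited) there is for each $i$ a short exact sequence
\[
0 \to {\varprojlim_r}^1 H^{i-1}(A) \to H^i(T(A,a)) \to \varprojlim_r H^i(A) \to 0,
\]
where the transition maps on cohomology are multiplication by $H^*(a)$. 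Since $\varprojlim_r$ along multiplication-by-$a$ on the constant tower $H^i(A)$ is exactly $\bigcap_r a^r H^i(A)$, and $\varprojlim^1_r$ along those same maps is a quotient recording the failure of surjectivity — by definition this $\varprojlim^1$ term together with the $\varprojlim$ term assemble into $H^{-1}$ and $H^0$ of the derived completion $H^i(A)^\wedge_a$ of the module $H^i(A)$ at the endomorphism $a$.

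Next I would write down the long exact sequence of the triangle $T(A,a) \to (A,a) \to A^\wedge_a \xrightarrow{+1}$:
\[
\cdots \to H^i(T(A,a)) \to H^i(A) \to H^i(A^\wedge_a) \to H^{i+1}(T(A,a)) \to H^{i+1}(A) \to \cdots.
\]
The map $H^i(T(A,a)) \to H^i(A)$ is the projection to the first factor; its image is the $\varprojlim$ part, i.e. $\bigcap_r a^r H^i(A)$, so the cokernel of $H^i(T(A,a)) \to H^i(A)$ is precisely $H^0(H^i(A)^\wedge_a)$ (the classical $a$-adic completion, in degree zero). That cokernel injects into $H^i(A^\wedge_a)$. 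For the other end of the sequence, I would identify the kernel of $H^{i+1}(T(A,a)) \to H^{i+1}(A)$: by the Milnor sequence above this kernel is exactly the $\varprojlim^1_r H^i(A)$-term sitting inside $H^{i+1}(T(A,a))$ — no, more carefully, one takes the kernel on the $\varprojlim$ summand, which is the $a$-torsion surviving all the way down, namely the classical Tate module $T_a H^{i+1}(A) = \varprojlim_r H^{i+1}(A)[a^r]$ with transition maps given by multiplication by $a$. Assembling: $H^i(A^\wedge_a)$ is an extension of $\ker(H^{i+1}(T(A,a)) \to H^{i+1}(A)) = T_a H^{i+1}(A)$ by $\mathrm{coker}(H^i(T(A,a)) \to H^i(A)) = H^0(H^i(A)^\wedge_a)$, which is the claimed short exact sequence.

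The main obstacle is bookkeeping the two distinct "$\varprojlim^1$-type" contributions without conflating them: $H^i(T(A,a))$ already carries a $\varprojlim^1_r H^{i-1}(A)$ piece from the Milnor sequence, and one must check that this piece maps to zero in $H^i(A)$ (it does, since it lies in the kernel of the projection by construction) so that it does not pollute the extension computing $H^i(A^\wedge_a)$ — instead it gets absorbed into the degree $-1$ part of $H^\ast(H^{i-1}(A)^\wedge_a)$ in the previous degree, consistent with the triangle. Concretely I would verify that the composite $\varprojlim^1_r H^{i-1}(A) \hookrightarrow H^i(T(A,a)) \to H^i(A)$ vanishes, that the surjection $H^i(T(A,a)) \twoheadrightarrow \varprojlim_r H^i(A) = \bigcap_r a^r H^i(A) \hookrightarrow H^i(A)$ has the stated cokernel, and that the connecting map $H^i(A^\wedge_a) \to H^{i+1}(T(A,a))$ lands in the $a$-power-torsion, matching $T_a H^{i+1}(A)$. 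Once these three identifications are in hand, splicing them into the long exact sequence yields the four-term exact sequence verbatim; everything else is formal diagram chasing with no convergence subtleties because of the cohomological dimension one bound on $R\varprojlim$ over $\mathbb N$.
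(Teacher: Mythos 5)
Your splice of the long exact sequence is not by itself enough: the two identifications you make at its ends are each off by the same group, namely ${\varprojlim}^1_a H^i(A)$, and the content of the lemma is precisely how that group regroups from one side to the other. Concretely, the triangle $T(A,a) \to A \to A^\wedge_a$ gives
\[
0 \to \operatorname{coker}\bigl(H^i(T(A,a)) \to H^i(A)\bigr) \to H^i(A^\wedge_a) \to \ker\bigl(H^{i+1}(T(A,a)) \to H^{i+1}(A)\bigr) \to 0 .
\]
Since $H^{i+1}(T(A,a)) \to H^{i+1}(A)$ factors through the quotient $\varprojlim_a H^{i+1}(A)$ of the Milnor sequence, the entire subgroup ${\varprojlim}^1_a H^{i}(A) \subseteq H^{i+1}(T(A,a))$ lies in that kernel; so the kernel is an extension of $T_a H^{i+1}(A)$ by ${\varprojlim}^1_a H^i(A)$, not $T_aH^{i+1}(A)$ itself. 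On the other end, applying the same triangle to the module $M = H^i(A)$ placed in degree $0$ gives $0 \to \operatorname{coker}(\varprojlim_a M \to M) \to H^0(M^\wedge_a) \to {\varprojlim}^1_a M \to 0$ and $H^{-1}(M^\wedge_a) = \ker(\varprojlim_a M \to M) = T_aM$; so $H^0(H^i(A)^\wedge_a)$ strictly contains your candidate cokernel whenever ${\varprojlim}^1_a H^i(A) \neq 0$. What your argument actually yields is a three-step filtration of $H^i(A^\wedge_a)$ with graded pieces $\operatorname{coker}(\varprojlim_a H^i(A)\to H^i(A))$, ${\varprojlim}^1_a H^i(A)$, and $T_aH^{i+1}(A)$; the lemma asserts that the bottom two pieces assemble, with the correct extension, to $H^0(H^i(A)^\wedge_a)$, and that assembly is exactly what remains unproved. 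Supplying it requires an extra input, e.g.\ functoriality of completion along $\tau^{\leq i}A \to A$ to produce a map $H^0(H^i(A)^\wedge_a) \to H^i(A^\wedge_a)$ and identify its image with the kernel of the map to $T_aH^{i+1}(A)$ --- or one can bypass the bookkeeping, as the paper does, by noting $D(R)^{end} \simeq D(R[a])$ and invoking the Stacks Project lemma on derived $a$-completion of complexes of $R[a]$-modules.

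Two auxiliary claims are also false and feed the confusion. First, $\varprojlim_a M$ along multiplication by $a$ is not $\bigcap_r a^r M$: the projection $\varprojlim_a M \to M$ need not be injective --- its kernel is exactly $T_aM$, e.g.\ for $M = \mathbb Q_p/\mathbb Z_p$ and $a = p$ the kernel is $\mathbb Z_p \neq 0$ --- so the limit is not a subgroup of $M$ at all. Your own text is inconsistent on this point: if $\varprojlim_a H^{i+1}(A)$ embedded into $H^{i+1}(A)$, then the ``kernel on the $\varprojlim$ summand'' would vanish rather than equal $T_aH^{i+1}(A)$. Second, $H^0(M^\wedge_a)$ is not the classical $a$-adic completion of $M$ in general; it differs from it by a ${\varprojlim}^1$ of $a$-power torsion (equivalently, by the ${\varprojlim}^1_a M$ term above). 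None of this threatens the truth of the lemma, but as written the proposal does not prove it.
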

\begin{proof}
	The category $D(R)^{end}$ is equivalent to $D(R[a])$. 
	Now \cite[15.93.5, \href{https://stacks.math.columbia.edu/tag/0BKF}{Tag 0BKF}]{stacks-project} 
	gives the above short exact sequence for complexes of $R[a]$-modules completed at $a$.
\end{proof}


\printbibliography

\end{document}